\documentclass[11pt]{amsart}

\usepackage{amsfonts,epsfig}
\usepackage{latexsym}
\usepackage{amssymb}
\usepackage{amsmath}
\usepackage{amsthm}
\usepackage{graphics}
\usepackage[all]{xy}
\usepackage[T2A]{fontenc}
\usepackage{multirow}
\usepackage{hhline}
\usepackage{array, booktabs, ctable}
\usepackage[backref]{hyperref}
\usepackage{enumerate}
\usepackage{comment}
\usepackage{breqn}
\usepackage{mathtools}
\usepackage{verbatim}

\addtolength{\textwidth}{4cm} \addtolength{\hoffset}{-2cm}
\addtolength{\marginparwidth}{-2cm} 

\newtheorem{defn}{Definition}[section]
\newtheorem{definition}[defn]{Definition}

\newtheorem{lemma}[defn]{Lemma}

\newtheorem{theorem}[defn]{Theorem}

\newtheorem{proposition}[defn]{Proposition}

\newtheorem*{theorem*}{Theorem}

\theoremstyle{definition}

\newtheorem*{ack}{Acknowledgements}
\newtheorem{remark}[defn]{Remark}

\newtheorem{example}[defn]{Example}


\newcommand{\Q}{\mathbb Q}
\newcommand{\Z}{\mathbb Z}

\newcommand{\Gal}{\operatorname{Gal}}
\newcommand{\Aut}{\operatorname{Aut}}

\newcommand{\GL}{\operatorname{GL}}



\begin{document}




\title[Maximal Abelian Extenstion Contained in a Division Field and CM]{The maximal abelian extension contained in a division field of an elliptic curve over $\mathbb{Q}$ with complex multiplication}

\author{Asimina S. Hamakiotes}
\address{University of Connecticut, Department of Mathematics, Storrs, CT 06269, USA}
\email{asimina.hamakiotes@uconn.edu} 
\urladdr{https://asiminah.github.io/}

\newcommand{\asimina}[1]{{\color{blue} \sf Asimina: [#1]}}

\begin{abstract} Let $K$ be an imaginary quadratic field, and let $\mathcal{O}_{K,f}$ be an order in $K$ of conductor $f\geq 1$. Let $E$ be an elliptic curve with CM by $\mathcal{O}_{K,f}$, such that $E$ is defined by a model over $\mathbb{Q}(j_{K,f})$, where $j_{K,f}=j(E)$. It has been shown by the author and Lozano-Robledo that $\Gal(\Q(j_{K,f},E[N])/\Q(j_{K,f}))$ is only abelian for $N=2,3$, and $4$. Let $p$ be a prime and let $n\geq 1$ be an integer. In this article, we bound the commutator subgroups of $\Gal(\Q(E[p^n])/\Q)$ and classify the maximal abelian extensions contained in $\Q(E[p^n])/\Q$. 
\end{abstract}

\maketitle


\section{Introduction}

Let $F$ be a number field and let $E$ be an elliptic curve defined over $F$. Let $\overline{F}$ be a fixed algebraic closure of $F$. Let $N\geq 2$ be an integer and let $E[N] = E(\overline{F})[N]$ be the $N$-torsion subgroup of $E(\overline{F})$. Let $F(E[N])$ be the $N$-division field of $E/F$, i.e., the field of definition of the coordinates of points in $E[N]$. It is natural to study the extension $F(E[N])/F$ and since it is Galois, it is also natural to study its Galois group $\Gal(F(E[N])/F)$, and the Galois representations that arise from the natural action of Galois on the $N$-torsion subgroup of the elliptic curve $E$.

The existence of the Weil pairing implies that $F(E[N])$ contains all the $N$-th roots of unity of $\overline{F}$, i.e., $F(\zeta_N) \subseteq F(E[N])$, where $\zeta_N$ is a primitive $N$-th root of unity \cite[Ch. III, Cor. 8.1.1]{silverman1}. This raises the following two questions:
\begin{itemize}
    \item[(1)] When is the extension $F(E[N])/F$ abelian? 
    \item[(2)] What is the maximal abelian extension contained in $F(E[N])/F$?
\end{itemize}

When $F=\Q$, Gonz\'alez-Jim\'enez and Lozano-Robledo \cite{gonzalez-jimenez-lozano-robledo} answer (1) and prove that if $\Q(E[N])/\Q$ is abelian, then $N=2, 3, 4, 5, 6,$ or $8$, and classify all the elliptic curves $E/\Q$ with this property. Moreover, for each $N$, they classify the possible abelian Galois groups that occur for a division field. When $F\neq \Q$, the question of when $F(E[N])/F$ is abelian is generally harder to answer; however, it has been answered for elliptic curves $E/F$ with complex multiplication (CM), where $F$ is the minimal field of definition of $E$, i.e., $F=\Q(j(E))$ where $j(E)$ is the $j$-invariant of $E$. In \cite{hamakiotes2023elliptic}, the author and Lozano-Robledo answer (1) by proving that for an elliptic curve $E/F$ with CM, where $F = \Q(j(E))$, the extension $F(E[N])/F$ is abelian only for $N=2,3,$ or $4$. Furthermore, they classify all elliptic curves $E/F$ with this property and for each $N$, they classify the possible abelian Galois groups that occur for a division field. Since the extension $F(E[N])/F$ is usually not abelian, it is natural to wonder what the maximal abelian extension contained in $F(E[N])/F$ is. 

The goal of this article is to answer (2) by providing a classification of the maximal abelian extensions contained in $\Q(E[N])/\Q$ for elliptic curves $E/\Q$ with CM. Let $K$ be an imaginary quadratic field and let $\mathcal{O}_K$ be the ring of integers of $K$ with discriminant $\Delta_K$. Let $f\geq 1$ be an integer and let $\mathcal{O}_{K,f}$ be the order of $K$ of conductor $f$, with discriminant $\Delta_Kf^2$. Let $j_{K,f}$ be a $j$-invariant associated to the order $\mathcal{O}_{K,f}$, i.e., an arbitrary Galois conjugate of $j(\mathbb{C}/\mathcal{O}_{K,f})$. Let $E/\Q$ be an elliptic curve with CM by $\mathcal{O}_{K,f}$, so $j_{K,f}\in \Q$, and let $G_{E,N} = \Gal(\Q(E[N])/\Q)$ as defined in Section \ref{notation}. By Theorem \ref{gal-repns-att-to-ec-w-cm} of Section \ref{classification-of-images}, the Galois group $G_{E,N}$ is contained in a subgroup of $\GL(2,\Z/N\Z)$ called the normalizer of Cartan $\mathcal{N}_{\delta,\phi}(N)$, which we will define in Section \ref{notation}. The index of $G_{E,N}$ in $\mathcal{N}_{\delta,\phi}(N)$ is a divisor of $4$ if $j(E)=1728$, a divisor of $6$ if $j(E)=0$, and a divisor of $2$ otherwise. 
Let $M_E(N)$ denote the maximal abelian extension contained in $\Q(E[N])/\Q$, i.e., $M_E(N) = \Q(E[N])\cap \Q^{\text{ab}}$.

The main results of this paper are as follows. Suppose that $p$ is an odd prime not dividing $\Delta_Kf$. 
\begin{theorem}\label{pndivDeltaKf}
    Let $E/\Q$ be an elliptic curve with CM by an order $\mathcal{O}_{K,f}$ of $K$ of conductor $f\geq 1$. Let $p$ be an odd prime not dividing $\Delta_Kf^2$ and let $n\geq 1$. Then $M_E(p^n) = K(\zeta_{p^n})$.
\end{theorem}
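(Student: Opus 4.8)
The plan is to compute the abelianization of $G_{E,p^n}=\Gal(\Q(E[p^n])/\Q)$ and identify its fixed field with $K(\zeta_{p^n})$. Throughout, write $\rho_{E,p^n}\colon\Gal(\overline{\Q}/\Q)\to\GL(2,\Z/p^n\Z)$ for the mod $p^n$ representation, $\mathcal{C}=\mathcal{C}_{\delta,\phi}(p^n)$ and $\mathcal{N}=\mathcal{N}_{\delta,\phi}(p^n)$, so that $G_{E,p^n}\subseteq\mathcal{N}$ by Theorem~\ref{gal-repns-att-to-ec-w-cm}, and set $H=G_{E,p^n}\cap\mathcal{C}$. \emph{Step 1: the easy inclusion $K(\zeta_{p^n})\subseteq M_E(p^n)$.} The Weil pairing gives $\Q(\zeta_{p^n})\subseteq\Q(E[p^n])$, so it suffices to show $K\subseteq\Q(E[p^n])$. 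Since $p\nmid f$, the mod $p^n$ CM action is an action of $(\mathcal{O}_K/p^n\mathcal{O}_K)^\times$, and (because $p\nmid\Delta_Kf^2$, i.e.\ $p$ is coprime to the discriminant of the order defining the Cartan) $\mathcal{C}$ is exactly its centralizer in $\GL(2,\Z/p^n\Z)$. The CM action is defined over $K$, hence commutes with $\Gal(\overline{\Q}/K)$; therefore $\rho_{E,p^n}(\Gal(\overline{\Q}/K))\subseteq\mathcal{C}$. On the other hand, complex conjugation $\tau$ acts on the endomorphism ring as $\alpha\mapsto\overline{\alpha}$, so conjugation by $\rho_{E,p^n}(\tau)$ sends the matrix of $\iota(\alpha)$ to that of $\iota(\overline{\alpha})$; taking $\alpha$ a generator of $\mathcal{O}_{K,f}$ we have $\alpha-\overline{\alpha}\notin p^n\mathcal{O}_{K,f}$ because its norm $|\Delta_Kf^2|$ is prime to $p$, so $\rho_{E,p^n}(\tau)\notin\mathcal{C}$. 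Comparing indices forces $\rho_{E,p^n}^{-1}(\mathcal{C})=\Gal(\overline{\Q}/K)$; in particular $K\subseteq\Q(E[p^n])$, $[G_{E,p^n}:H]=2$, and $H=\Gal(\Q(E[p^n])/K)$ is abelian. Since $K(\zeta_{p^n})/\Q$ is abelian this yields $K(\zeta_{p^n})\subseteq M_E(p^n)$.

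\emph{Step 2: bounding $[M_E(p^n):\Q]=|G_{E,p^n}^{\mathrm{ab}}|$ from above.} Because $G_{E,p^n}/H\cong\Z/2\Z$ is cyclic and $H$ is abelian, one checks that $[G_{E,p^n},G_{E,p^n}]=[G_{E,p^n},H]$ (a central extension of a cyclic group is abelian), and this group is generated by the elements $\gamma(h)^{-1}h$ with $h\in H$, where $\gamma$ denotes conjugation by any fixed $\sigma\in G_{E,p^n}\setminus H$; indeed for $g=h'\sigma$ one computes $[g,h]=\gamma(h)^{-1}h$ independently of $h'$. Since $H$ is abelian, $h\mapsto\gamma(h)^{-1}h$ is a homomorphism $H\to H$ with kernel the fixed subgroup $H^{\gamma}=\{h\in H:\gamma(h)=h\}$, so $|[G_{E,p^n},G_{E,p^n}]|=|H|/|H^{\gamma}|$ and hence $|G_{E,p^n}^{\mathrm{ab}}|=2|H^{\gamma}|$. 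Moreover conjugation by $\sigma$ acts on $\mathcal{C}$ as the standard conjugation automorphism $\sigma_0$ of the generalized Cartan (independent of the choice of $\sigma\in\mathcal{N}\setminus\mathcal{C}$, as $\mathcal{C}$ is abelian), so $H^{\gamma}\subseteq\mathcal{C}^{\sigma_0}$, and it remains to bound $|\mathcal{C}^{\sigma_0}|$.

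\emph{Step 3: the Cartan computation (the crux).} This is exactly where $p\nmid\Delta_Kf^2$ is used in an essential way. Since $p$ is coprime to the discriminant of the quadratic order underlying $\mathcal{C}_{\delta,\phi}(p^n)$, the quadratic $\Z/p^n\Z$-algebra whose unit group is $\mathcal{C}$ is \'etale: it is isomorphic to $\Z/p^n\Z\times\Z/p^n\Z$ if $p$ splits in $K$ and to the unramified quadratic $\Z/p^n\Z$-algebra if $p$ is inert in $K$, so $\mathcal{C}$ is a split or a nonsplit Cartan subgroup. In either case the subring fixed by $\sigma_0$ is precisely the image of $\Z/p^n\Z$, so $\mathcal{C}^{\sigma_0}\cong(\Z/p^n\Z)^\times$ has order $\varphi(p^n)$. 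Hence $[M_E(p^n):\Q]=|G_{E,p^n}^{\mathrm{ab}}|=2|H^{\gamma}|\le 2\varphi(p^n)$. Finally, since $p\nmid\Delta_K$, the field $K$ is not the unique quadratic subfield of $\Q(\zeta_{p^n})$ (the one ramified only at $p$), so $K\cap\Q(\zeta_{p^n})=\Q$ and $[K(\zeta_{p^n}):\Q]=2\varphi(p^n)$. Combining this with Step 1 gives $K(\zeta_{p^n})\subseteq M_E(p^n)$ and $[M_E(p^n):\Q]\le 2\varphi(p^n)=[K(\zeta_{p^n}):\Q]$, whence $M_E(p^n)=K(\zeta_{p^n})$.

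The main obstacle is Step 3: pinning down the fixed points of the conjugation automorphism on $\mathcal{C}_{\delta,\phi}(p^n)$, which forces one to understand the precise structure of the generalized Cartan at a prime not dividing the CM discriminant and hence splits into the split and inert cases; this, together with the identification of the centralizer of the CM action with $\mathcal{C}$ used in Step~1, is supplied by the structure theory underlying Theorem~\ref{gal-repns-att-to-ec-w-cm}.
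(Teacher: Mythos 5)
Your proposal is correct, but it takes a genuinely different and more structural route than the paper. The paper first records the lower bound $K(\zeta_{p^n})\subseteq M_E(p^n)$, which gives an \emph{upper} bound on $|G'_{E,p^n}|$, and then matches it with a \emph{lower} bound on $|G'_{E,p^n}|$ proved by induction on $n$: at each level one explicitly exhibits a nontrivial element of $\ker(\pi|_{G'})$ as a quotient of two commutators (one twisted by a kernel element $\kappa$), and the base cases and the algebraic identities are verified in \texttt{Magma}. This has to be carried out separately for the split normalizer, the non-split normalizer, and (when $j(E)=0$) for the index-$3$ subgroups. Your argument instead computes $|G_{E,p^n}^{\mathrm{ab}}|$ in closed form: since $H=G_{E,p^n}\cap\mathcal{C}$ is abelian of index $2$, the commutator subgroup is exactly the image of the homomorphism $h\mapsto\gamma(h)h^{-1}$ on $H$, so $|G_{E,p^n}^{\mathrm{ab}}|=2|H^\gamma|\le 2|\mathcal{C}^{\sigma_0}|$; and because $p\nmid\Delta_Kf$ makes $\mathcal{C}_{\delta,\phi}(p^n)$ a split or non-split Cartan, the fixed subgroup of the flip is the scalars and has order $\varphi(p^n)$, which squeezes against the lower bound from Step 1. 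This handles all sub-cases uniformly (including the $j(E)=0$, index-$3$ images) with no induction or machine verification, and it cleanly isolates exactly where $p\nmid\Delta_Kf^2$ is used. The trade-off is that the paper's inductive commutator technique is the workhorse that carries over to the ramified primes of Theorems 1.2--1.4, where the degree of $M_E(p^n)$ actually depends on the index $[\mathcal{N}:G_{E,p^n}]$; in those cases your upper bound $2\varphi(p^n)$ is no longer tight for the small images, so the squeeze would fail without a finer analysis of $H^\gamma$. One small wording nit: in Step 2 the parenthetical ``a central extension of a cyclic group is abelian'' is invoked as the reason $[G,G]=[G,H]$; the relevant statement is that $H/[G,H]$ is central in $G/[G,H]$ with cyclic quotient $G/H$, hence $G/[G,H]$ is abelian, which is what you mean, but it is worth spelling out since $H$ itself is not assumed central.
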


Suppose that $p$ is an odd prime dividing $\Delta_Kf$. Let $E/\Q$ be an elliptic curve with CM by an order $\mathcal{O}_{K,f}$ of $K$ of conductor $f\geq 1$. If $j(E)\neq 0, 1728$, then $E$ is a quadratic twist of $E_{\Delta_K,f}$ from Table \ref{ell_curves} by some $d \in \Z$ that is unique up to squares and that determines $\alpha = \alpha(E)$, which we will discuss in Section \ref{computing-alpha}. 

\begin{theorem}\label{pdivDeltaKf}
    Let $E/\Q$ be an elliptic curve with CM by an order $\mathcal{O}_{K,f}$, $f\geq 1$, such that $j(E)\neq 0$ or $1728$. Let $p$ be an odd prime dividing $\Delta_Kf^2$ and let $n\geq 1$. Then there is an explicitly computable integer $\alpha = \alpha(E)$, unique up to squares, such that $M_E(p^n) = \Q(\zeta_{p^n},\sqrt{\alpha})$. Moreover, if $[\mathcal{N}_{\delta,\phi}(p^n): G_{E,p^n}] = 2$, then $\sqrt{\alpha} \in \Q(\zeta_{p^n})$ and $M_E(p^n) = \Q(\zeta_{p^n})$.
\end{theorem}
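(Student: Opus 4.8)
The plan is to reduce to group theory. Since $M_E(p^n) = \Q(E[p^n]) \cap \Q^{\mathrm{ab}}$ is the fixed field of the commutator subgroup of $G := G_{E,p^n} = \Gal(\Q(E[p^n])/\Q)$, we have $\Gal(M_E(p^n)/\Q) \cong G^{\mathrm{ab}} := G/[G,G]$. Write $\mathcal{N} := \mathcal{N}_{\delta,\phi}(p^n)$ and let $\mathcal{C} := \mathcal{C}_{\delta,\phi}(p^n)$ be the Cartan subgroup; by Theorem~\ref{gal-repns-att-to-ec-w-cm} together with $j(E) \neq 0, 1728$ we have $G \subseteq \mathcal{N}$ with $[\mathcal{N}:G] \in \{1,2\}$. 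The Weil pairing gives $\Q(\zeta_{p^n}) \subseteq \Q(E[p^n])$, hence $\Q(\zeta_{p^n}) \subseteq M_E(p^n)$ because $\Q(\zeta_{p^n})/\Q$ is abelian. It therefore suffices to compute $|G^{\mathrm{ab}}|$ in each of the two cases and then to identify the abelian field with the corresponding Galois group.

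\emph{Abelianizations.} Present $\mathcal{N}$ with $\mathcal{C}$ the image of $(\Of/p^n\Of)^{\times}$ under the regular representation and an extra generator $c$ acting on $\mathcal{C}$ by the ring conjugation $\gamma \mapsto \overline{\gamma}$. Since $\mathcal{N}/\mathcal{C}$ has order $2$, a short commutator computation yields $[\mathcal{N},\mathcal{N}] = \{\,\overline{\gamma}\gamma^{-1} : \gamma \in \mathcal{C}\,\}$, and likewise $[G,G] = \{\,\overline{h}\,h^{-1} : h \in G \cap \mathcal{C}\,\}$ when $[\mathcal{N}:G] = 2$ and $G \neq \mathcal{C}$. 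The arithmetic inputs, all valid because $p$ is odd with $p \mid \Delta_K f^2$, are: (a) the conjugation-fixed subring of $R := \Of/p^n\Of$ is exactly $\Z/p^n\Z$ — in a $\Z$-basis $\{1, \theta\}$ of $\Of$, conjugation is $a + b\theta \mapsto (a + b\,\Tr\theta) - b\theta$, whose fixed points satisfy $2b \equiv 0 \pmod{p^n}$, hence $b \equiv 0$ — so $\mathcal{C}^{c} = (\Z/p^n\Z)^{\times}$; and (b) $R$ is a local ring with residue field $\F_p$ (when $p \nmid f$ this is $\OK/\mathfrak{p}^{2n}$ for the prime $\mathfrak{p}$ of $\OK$ above $p$; when $p \mid f$ the conductor makes $R$ local with residue field $\F_p$ even though $R$ has nilpotents), so $|\mathcal{C}| = |R^{\times}| = p^{2n-1}(p-1)$ and $[\mathcal{C} : (\Z/p^n\Z)^{\times}] = p^{n}$. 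From (a) and (b): $|\mathcal{N}^{\mathrm{ab}}| = 2\,|\mathcal{C}^{c}| = 2\phi(p^n)$; and since $p^{n}$ is odd, $\mathcal{C}$ has no index-$2$ subgroup containing $(\Z/p^n\Z)^{\times}$, so when $[\mathcal{N}:G] = 2$ (and $G \neq \mathcal{C}$) the group $G \cap \mathcal{C}$ meets $(\Z/p^n\Z)^{\times}$ in index $2$, whence $|G^{\mathrm{ab}}| = 2\,|(G\cap\mathcal{C}) \cap (\Z/p^n\Z)^{\times}| = \phi(p^n)$. The degenerate possibility $G = \mathcal{C}$, i.e. $\Q(E[p^n])/\Q$ abelian, is excluded for odd $p$ with $j(E) \neq 0, 1728$ by the classification of abelian division fields in \cite{hamakiotes2023elliptic}.

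\emph{Identifying the field.} If $[\mathcal{N}:G] = 2$, then $[M_E(p^n):\Q] = |G^{\mathrm{ab}}| = \phi(p^n) = [\Q(\zeta_{p^n}):\Q]$, and since $\Q(\zeta_{p^n}) \subseteq M_E(p^n)$ we conclude $M_E(p^n) = \Q(\zeta_{p^n})$; combined with the first assertion of the theorem this forces $\sqrt{\alpha} \in \Q(\zeta_{p^n})$, which is the ``moreover''. If $[\mathcal{N}:G] = 1$, then $[M_E(p^n):\Q] = |\mathcal{N}^{\mathrm{ab}}| = 2\phi(p^n)$, so $M_E(p^n)$ is a quadratic extension of $\Q(\zeta_{p^n})$ that is abelian over $\Q$; using that $\Q(\zeta_{p^n})$ has a unique quadratic subfield and that $\mathcal{N}^{\mathrm{ab}}$ has $\Z/2$ as a direct factor (the class of $c$), one deduces $M_E(p^n) = \Q(\zeta_{p^n}, \sqrt{\alpha})$ for an integer $\alpha$ whose quadratic field is unramified outside $p$ and the primes of bad reduction of $E$, and is cut out by a quadratic character of $G = \mathcal{N}$ trivial on $[\mathcal{N},\mathcal{N}]$ and nontrivial on $\Gal(\Q(E[p^n])/\Q(\zeta_{p^n})) = \ker(\det\circ\rho_{E,p^n})$. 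Evaluating this character — equivalently, tracking how the quadratic twist carrying $E_{\Delta_K,f}$ to $E$ alters the Galois image — expresses $\alpha$ through the twist parameter $d$. The explicit formula for $\alpha = \alpha(E)$, and the criterion on $d$ deciding which of the cases $[\mathcal{N}:G] \in \{1,2\}$ occurs, are what Section~\ref{computing-alpha} establishes; matching those with the above completes the proof.

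The step I expect to be the main obstacle is the explicit determination of $\alpha(E)$ in the case $[\mathcal{N}:G]=1$, since it comes down to a reciprocity/conductor computation for the Hecke character attached to $E/K$, tracked through the quadratic twist — precisely the content of Section~\ref{computing-alpha}. A secondary technical point is the local analysis of $R = \Of/p^n\Of$ when $p \mid f$, where $R$ is not a quotient of a discrete valuation ring and one must argue by hand (splitting according to the behavior of $p$ in $K$) that $R$ is local with residue field $\F_p$ and that its conjugation-fixed subring is $\Z/p^n\Z$.
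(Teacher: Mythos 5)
Your approach is genuinely different from the paper's and, for the group-theoretic part, cleaner. The paper establishes the equality $|G'_{E,p^n}| = p^n$ by bounding above (from $\Q(\zeta_{p^n},\sqrt{\alpha}) \subseteq M_E(p^n)$) and below (by an induction on $n$ together with an explicitly constructed commutator verified in \verb|Magma| at each level). You instead compute the commutator subgroup in closed form: since $\mathcal{N}_{\delta,\phi}(p^n) = \mathcal{C}_{\delta,\phi}(p^n) \rtimes \langle c_\varepsilon\rangle$ with $c_\varepsilon$ acting as conjugation on $\mathcal{C}_{\delta,\phi}(p^n) \cong R^\times$, $R = \mathcal{O}_{K,f}/p^n\mathcal{O}_{K,f}$, the commutator subgroup is the image of $\gamma \mapsto \bar\gamma\gamma^{-1}$, whose kernel is the conjugation-fixed subring $(\Z/p^n\Z)^\times$; the count $|[\mathcal{N},\mathcal{N}]| = p^{2n-1}(p-1)/\phi(p^n) = p^n$ matches the paper's induction, and the analogous computation for $G\cap\mathcal{C}$ handles the index-$2$ case. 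The key arithmetic inputs (fixed subring is $\Z/p^n\Z$ since $p$ is odd; $R$ is local with residue field $\F_p$ when $p\mid\Delta_Kf$) are correct, though the $p\mid f$ case requires some care, as you note. This replaces a computational verification with a conceptual one, which is a real gain.

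There are two gaps. First, a minor one: to exclude $G = \mathcal{C}$, you appeal to the classification of abelian division fields, but that classification permits $N = 3$ to be abelian, so it does not immediately rule out $G_{E,3} = \mathcal{C}_{\delta,\phi}(3)$; the cleaner observation (which the paper implicitly uses via Theorem 1.5(a) of \cite{lozano-galoiscm}) is that $K \subseteq \Q(E[p^n])$ by Lemma \ref{CMfld_in_divfld} forces $G$ to surject onto $\Gal(K/\Q)$, hence to contain elements of $\mathcal{N}\setminus\mathcal{C}$.

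Second, and more substantively: in the index-$1$ case your group theory pins down only $[M_E(p^n):\Q] = 2\phi(p^n)$, and the step from ``$M_E(p^n)$ is a quadratic extension of $\Q(\zeta_{p^n})$ abelian over $\Q$'' to ``$M_E(p^n) = \Q(\zeta_{p^n},\sqrt{\alpha})$ with $\alpha \in \Z$'' is not automatic. A quadratic extension of a cyclotomic field that is abelian over $\Q$ need not be obtained by adjoining the square root of a rational (compare $\Q(\zeta_{16})/\Q(\zeta_8)$). The observation that $\mathcal{N}^{\mathrm{ab}} = (\mathcal{C}/[\mathcal{N},\mathcal{N}]) \times \langle\bar{c}\rangle$ does not by itself close this: the subgroup you must split off is $\Gal(M_E(p^n)/\Q(\zeta_{p^n})) = \ker(\det)/[\mathcal{N},\mathcal{N}]$, which lies inside the $\mathcal{C}$-factor, not the $\langle\bar{c}\rangle$-factor; one would have to verify its generator is not a square in $\mathcal{N}^{\mathrm{ab}}$, which you do not do. The paper sidesteps this entirely by exhibiting the quadratic field concretely: Lemma \ref{sqrtd_in_divfld} shows $\sqrt{\alpha} \in \Q(E[p^n])$, where $\alpha$ is the twisting parameter carrying $E_{\Delta_K,f}$ to $E$, and comparing the sizes of the two images then forces $\sqrt{\alpha}\notin\Q(\zeta_{p^n})$. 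You flag Section \ref{computing-alpha} as ``the main obstacle,'' which is the right instinct, but what is needed from it is the elementary twist Lemma \ref{sqrtd_in_divfld} rather than a ``reciprocity/conductor computation for the Hecke character''; the argument would be complete if you made the twist comparison explicit and used it, as the paper does, to produce the quadratic $L = \Q(\sqrt{\alpha})$ directly rather than inferring its existence abstractly.
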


Let $p=3$ and $\Delta_Kf^2 = -3$. If $j(E)=0$, then $E$ is a sextic twist of $E_{-3,1}$ by some $d \in \Z$ that is unique up to $6$-th powers and that determines $\alpha = \alpha(E)$, which we will discuss in Section \ref{computing-alpha}. 
\begin{theorem}\label{3divDeltaKf}
    Let $E/\Q$ be an elliptic curve with $j(E)=0$ and let $n\geq 1$. Then there is an explicitly computable integer $\alpha = \alpha(E)$, unique up to $6$-th powers, such that $M_E(3^n) = \Q(\zeta_{3^n},\sqrt{\alpha})$. Moreover, if $[\mathcal{N}_{-1,1}(3^n): G_{E,3^n}] = 2$ or $6$, then $\sqrt{\alpha} \in \Q(\zeta_{3^n})$ and $M_E(3^n) = \Q(\zeta_{3^n})$.
\end{theorem}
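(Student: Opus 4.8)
The plan is to reduce Theorem \ref{3divDeltaKf} to the same mechanism that handles Theorem \ref{pdivDeltaKf}, but now tracking the sextic twist structure at $j(E)=0$ over the prime $p=3$ dividing $\Delta_K f^2 = -3$. First I would recall from Section \ref{classification-of-images} that $G_{E,3^n} = \Gal(\Q(E[3^n])/\Q)$ sits inside the normalizer of the (ramified) Cartan $\mathcal{N}_{-1,1}(3^n)$, with index dividing $6$ since $j(E)=0$, and that this Cartan is itself abelian. The abelianization $\mathcal{N}_{-1,1}(3^n)^{\mathrm{ab}}$ is controlled by the quotient by the commutator subgroup, and $M_E(3^n)$ is the fixed field of the commutator subgroup of $G_{E,3^n}$. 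So the first genuine step is to compute (or bound) $[G_{E,3^n}, G_{E,3^n}]$ inside $\mathcal{N}_{-1,1}(3^n)$, distinguishing the cases according to the index $[\mathcal{N}_{-1,1}(3^n):G_{E,3^n}] \in \{1,2,3,6\}$; for the "generic'' sextic twist one expects $[\mathcal{N}_{-1,1}(3^n):G_{E,3^n}]=1$ (or $3$), and the index-$2$ and index-$6$ cases are precisely where the image lands inside the full Cartan or an intermediate abelian group, forcing the whole division field to be abelian over $\Q$ after adjusting, which is where the second sentence of the theorem comes from.

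Next I would identify the fixed field of $[G_{E,3^n},G_{E,3^n}]$ explicitly. The commutator subgroup of the normalizer of a Cartan is standard: conjugation by the "complex conjugation'' element inverts the Cartan part, so $[\mathcal{N}_{-1,1}(3^n),\mathcal{N}_{-1,1}(3^n)]$ contains all squares of Cartan elements, and the abelianization is an elementary-abelian-by-cyclic group whose fixed field is a compositum of $\Q(\zeta_{3^n})$ with at most one quadratic extension $\Q(\sqrt{\alpha})$. The content is to pin down $\alpha$ in terms of the twisting parameter $d$: using the Weil pairing, $\Q(\zeta_{3^n}) \subseteq M_E(3^n)$ always, and the "extra'' quadratic piece is detected by the mod-$2$ behavior of the twist, i.e. by a Kummer-theory argument $\sqrt{\alpha}$ should be a square root of a quantity built from $d$ and $\Delta_K = -3$ (something like $\alpha \equiv -3d$ or $\alpha \equiv d$ up to squares, to be read off from the explicit model $E_{-3,1}$ and its sextic twist $E_d$). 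I would do this by writing down the sextic twist model, computing the action of $\Gal(\overline{\Q}/\Q)$ on a basis of $E[2]$ (where the twisting by $d$ interacts with the class group / genus theory of $K$), and reading the resulting quadratic character. This is the step that requires the "explicitly computable'' bookkeeping promised in Section \ref{computing-alpha}, and it is where one sees that $\alpha$ is only well-defined up to $6$-th powers because scaling $d$ by a $6$-th power gives an isomorphic curve.

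Then I would prove the "moreover'' clause: if $[\mathcal{N}_{-1,1}(3^n):G_{E,3^n}] = 2$ or $6$, then $G_{E,3^n}$ is (up to the relevant subgroup) already abelian-enough that its commutator subgroup coincides with that of the whole normalizer restricted appropriately, so the quadratic twist parameter becomes trivial — concretely, $\sqrt{\alpha}$ already lies in $\Q(\zeta_{3^n})$ and hence $M_E(3^n) = \Q(\zeta_{3^n})$. I expect this to follow from a direct group-theoretic check: the index-$2$ and index-$6$ subgroups of $\mathcal{N}_{-1,1}(3^n)$ that can occur as images are exactly those whose abelianization is a quotient of $(\Z/3^n\Z)^\times$, with no extra $\Z/2\Z$ factor, because the "extra'' quadratic character is killed precisely when one passes to such a subgroup. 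The main obstacle, I expect, is the case analysis at small $n$ (especially $n=1,2$) and at the prime $3$ itself: because $3 \mid \Delta_K f^2$, the Cartan subgroup at $3$ is the ramified/nonsplit-type Cartan $\mathcal{N}_{-1,1}$ rather than the generic one, so its subgroup lattice and commutator structure are more delicate than in Theorem \ref{pndivDeltaKf}, and one must be careful that $\Q(\zeta_3) = K$ here (since $\Q(\sqrt{-3})=\Q(\zeta_3)$), which causes some collapsing of fields that has to be handled explicitly rather than generically. I would organize the argument so that the generic $n$ case is a clean application of the abelianization computation, and then verify the finitely many exceptional small-$n$ configurations by the explicit models and a short computation (possibly machine-assisted, matching the style of the earlier theorems).
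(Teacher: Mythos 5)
Your overall strategy---identify abelian subextensions of $\Q(E[3^n])$ and bound the commutator subgroup $G'_{E,3^n}$ using the classification of possible $3$-adic images---has the right shape and matches the paper's approach, but there are two genuine gaps. The most serious is that you never explain how to prove $|G'_{E,3^n}|$ is \emph{large enough}. Exhibiting $\Q(\zeta_{3^n},\sqrt{\alpha}) \subseteq \Q(E[3^n])$ only gives the upper bound $|G'_{E,3^n}| \leq |G_{E,3^n}|/[\Q(\zeta_{3^n},\sqrt{\alpha}):\Q]$; to conclude $M_E(3^n) = \Q(\zeta_{3^n},\sqrt{\alpha})$ one must produce enough commutators in $G_{E,3^n}$ to match, and computing the abelianization of the ambient normalizer does not do this, since $G'_{E,3^n}$ can be strictly smaller than $G_{E,3^n}$ intersected with the commutator of the normalizer. (Your remark that conjugation by $c_\varepsilon$ ``inverts the Cartan part'' so the commutator contains squares is also not right: $c_\varepsilon c_{\delta,\phi}(a,b) c_\varepsilon^{-1} = c_{\delta,\phi}(a,-b)$, so $[c_\varepsilon, c_{\delta,\phi}(a,b)]$ corresponds to $\bar z/z$ rather than $z^{-2}$; the commutator subgroup of the normalizer is the norm-one subgroup of the Cartan, not the squares. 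This happens to give an abelianization of the correct shape, but for a different reason than you state.) The paper devotes the bulk of its proof of this theorem to exactly the missing lower bound: a base-case \verb|Magma| computation up to the level of definition ($27$), followed by an induction in which one explicitly constructs a nontrivial element of $\ker(\pi|_{G'})$ by perturbing a pair of generators by a kernel element $\kappa$ (the mechanism of Section~\ref{main_method}). Your proposal has no analogue of this step, and without it the proof does not close.

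Second, your plan to determine $\sqrt{\alpha}$ by ``computing the action of $\Gal(\overline{\Q}/\Q)$ on a basis of $E[2]$'' is misdirected: what is needed is that $\sqrt{\alpha}$ lies in $\Q(E[3^n])$, and information about $\Q(E[2])$ does not yield that. The paper instead feeds in an explicit description of $\Q(E^d[3])$ and $\Q(E^d[9])$ for sextic twists (packaged as Lemma~\ref{alpha-for-p3-j0}), from which one reads off that $\sqrt{\alpha}\in\Q(E^\alpha[3])$ always, and determines precisely for which $\alpha$---equivalently, for which indices $[\mathcal{N}_{-1,1}(3^n):G_{E,3^n}]$---one already has $\sqrt{\alpha}\in\Q(\zeta_{3^n})$. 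Your guess ``$\alpha \equiv -3d$ or $\alpha \equiv d$ up to squares'' is in the right ballpark (the paper takes $\alpha = d$, the sextic-twist parameter), but the $E[2]$-based argument you sketch would not establish the containment needed, nor the ``moreover'' clause.
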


The cases where $p=2$ are handled separately in the following theorem. 
Note that if $j(E)=1728$, then $E$ is a quartic twist of $E_{-4,1}$ by some $d \in \Z$ that is unique up to $4$-th powers and that determines $\alpha = \alpha(E)$, which we will discuss in Section \ref{computing-alpha}.

\begin{theorem}\label{casep2}
    Let $E/\Q$ be an elliptic curve with CM by an order $\mathcal{O}_{K,f}$ of $K$ of conductor $f \geq 1$. 
    \begin{enumerate}
        \item[(a)] Suppose that $2$ does not divide $\Delta_Kf$. Then $M_E(2^n) = K(\zeta_{2^n})$ for $n\geq 1$.
        
        \item [(b)] Suppose that $2$ divides $\Delta_Kf$. There is an explicitly computable integer $\alpha = \alpha(E)$, unique up to squares, such that:  
        \begin{enumerate} 
            \item[(i)] If $\Delta_Kf^2 = -12$ or $-28$, then $M_E(2^n) = K(\zeta_{2^{n+1}})$ for $n \geq 2$. For $n=1$, 
            \begin{itemize}
                \item if $\Delta_Kf^2 = -12$, then $M_E(2) = \Q(\sqrt{3})$; and, 
                \item if $\Delta_Kf^2 = -28$, then $M_E(2) = \Q(\sqrt{7})$.
            \end{itemize}
            \item[(ii)] If $\Delta_Kf^2 = -4$, then $M_E(2^n) = \Q(\zeta_{2^{n+1}},\sqrt{\alpha})$ for $n \geq 2$. Moreover, if $[\mathcal{N}_{\delta,\phi}(2^n):G_{E,2^n}] = 4$, then $\sqrt{\alpha} \in \Q(\zeta_{2^{n+1}})$ and $M_E(2^n) = \Q(\zeta_{2^{n+1}})$. For $n=1$, 
            \begin{itemize}
                \item if $E : y^2 = x^3 + t^2x$ for $t\in \Q^*$, then $M_E(2) = \Q(i)$; 
                \item if $E : y^2 = x^3 - t^2x$ for $t \in \Q^*$, then $M_E(2) = \Q$;  
                \item if $E : y^2 = x^3 + sx$ where $s \neq \pm t^2$ for $t\in \Q^*$, then $M_E(2) = \Q(\sqrt{-s})$.
            \end{itemize}

            \item[(iii)] If $\Delta_Kf^2 = -8$ or $-16$, then $M_E(2) = \Q(\sqrt{2})$ and $M_E(2^n) = \Q(\zeta_{2^{n+1}},\sqrt{\alpha})$ for $n\geq 2$. Moreover, if $[\mathcal{N}_{\delta,\phi}(2^n): G_{E,2^n}] = 2$, then $\sqrt{\alpha} \in \Q(\zeta_{2^{n+1}})$ and $M_E(2^n) = \Q(\zeta_{2^{n+1}})$.
        \end{enumerate}
    \end{enumerate}
\end{theorem}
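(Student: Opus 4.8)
The plan is to use the standard identification $\Gal(M_E(2^n)/\Q)\cong G_{E,2^n}/[G_{E,2^n},G_{E,2^n}]$ coming from $M_E(2^n)=\Q(E[2^n])\cap\Q^{\text{ab}}$, which reduces the computation of $M_E(2^n)$ to (i) a group-theoretic computation of the commutator subgroup of $G_{E,2^n}$ inside $\mathcal{N}_{\delta,\phi}(2^n)$, and (ii) an arithmetic step identifying the subfield of $\Q(E[2^n])$ fixed by that commutator. This is the same two-step strategy behind Theorems \ref{pndivDeltaKf} and \ref{pdivDeltaKf}: part (a) here is the direct analogue of Theorem \ref{pndivDeltaKf} at $p=2$ (and in particular subsumes the case $j(E)=0$, for which $\Delta_Kf^2=-3$ is odd), while part (b) runs parallel to Theorems \ref{pdivDeltaKf} and \ref{3divDeltaKf}. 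The genuinely new feature at $p=2$ is that the local behaviour at the ramified prime forces one extra layer of the cyclotomic tower, $\Q(\zeta_{2^{n+1}})/\Q(\zeta_{2^n})$, into the abelianization, which is why the answers in part (b) involve $\zeta_{2^{n+1}}$, and why the exponents $n=1$ (and, more mildly, $n=2,3$) behave differently from large $n$.

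For step (i) I would begin from the description of $G_{E,2^n}$ supplied by Theorem \ref{gal-repns-att-to-ec-w-cm}: it contains the Cartan $\mathcal{C}_{\delta,\phi}(2^n)\cong(\mathcal{O}_{K,f}/2^n\mathcal{O}_{K,f})^\ast$ with index dividing $2$ (dividing $4$ when $j(E)=1728$, i.e. $\Delta_Kf^2=-4$, the family $y^2=x^3+sx$; dividing $6$ when $j(E)=0$), lies in the normalizer $\mathcal{N}_{\delta,\phi}(2^n)$, and in the twisted cases is pinned down inside $\mathcal{N}_{\delta,\phi}(2^n)$ by the invariant $\alpha=\alpha(E)$ of Section \ref{computing-alpha}. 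As the Cartan is abelian, $[G_{E,2^n},G_{E,2^n}]$ is generated by the commutators of its Cartan part with the normalizing involution $w$, i.e. by the image of the map $c\mapsto c^{-1}c^{w}$ on $\mathcal{C}_{\delta,\phi}(2^n)\cap G_{E,2^n}$, where $w$ acts on $\mathcal{C}_{\delta,\phi}(2^n)$ through the nontrivial automorphism of $(\mathcal{O}_{K,f}/2^n)^\ast$ over $(\Z/2^n\Z)^\ast$. So one must compute the image of $(1-w)$ on the Cartan --- equivalently, the cokernel of the norm $(\mathcal{O}_{K,f}/2^n)^\ast\to(\Z/2^n\Z)^\ast$ --- and this is where the $2$-adic subtleties live: $(\Z/2^n\Z)^\ast$ is non-cyclic for $n\ge3$, and $\mathcal{O}_{K,f}\otimes\Z_2$ is split, inert, or ramified (and possibly non-maximal) according to $\Delta_Kf$ and the exact power of $2$ dividing $f$. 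Carrying this out discriminant by discriminant is exactly what separates the unramified family of part (a) from the five ramified families $\Delta_Kf^2\in\{-4,-8,-12,-16,-28\}$ of part (b), each requiring a finite computation for $n=1$ versus $n\ge2$.

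For step (ii) I would match the abstract abelian quotient to an explicit field. The cyclotomic field $\Q(\zeta_{2^n})\subseteq\Q(E[2^n])$ is the fixed field of $\ker(\det)\cap G_{E,2^n}$ (Weil pairing), the quadratic field $K=\Q(\sqrt{\Delta_K})$ is the fixed field of $\mathcal{C}_{\delta,\phi}(2^n)\cap G_{E,2^n}$ whenever $G_{E,2^n}\not\subseteq\mathcal{C}_{\delta,\phi}(2^n)$ (which holds for large $n$ by the non-abelianity of $\Q(E[2^n])/\Q$ recalled in the introduction), the extra ramification at $2$ supplies the layer $\Q(\zeta_{2^{n+1}})/\Q(\zeta_{2^n})$, and the twist supplies the quadratic piece $\sqrt{\alpha}$; together these give $M_E(2^n)=\Q(\zeta_{2^{n+1}},\sqrt{\alpha})$ in the generic ramified case, which collapses to $K(\zeta_{2^{n+1}})$ in the families $\Delta_Kf^2=-12,-28$ because there $\alpha$ equals $\Delta_K$ up to squares. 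For the ``Moreover'' clauses, when $[\mathcal{N}_{\delta,\phi}(2^n):G_{E,2^n}]$ attains the largest value permitted by the divisibility constraints ($4$ when $j=1728$, $2$ otherwise), I expect a direct Kronecker--Weber argument to show $\sqrt{\alpha}$ already lies in $\Q(\zeta_{2^{n+1}})$: a quadratic field ramified only at $2$ is one of $\Q(i)$, $\Q(\sqrt{2})$, $\Q(\sqrt{-2})$, all contained in $\Q(\zeta_8)\subseteq\Q(\zeta_{2^{n+1}})$ for $n\ge2$, and the maximal index forces $\alpha$ into this list, so $M_E(2^n)=\Q(\zeta_{2^{n+1}})$. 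Finally, the exceptional values at $n=1$ are read off directly from the $2$-torsion: $\Q(E[2])$ is the splitting field of the defining cubic, of degree dividing $6$, and $M_E(2)$ is its maximal abelian subfield --- immediate once the cubic (with its discriminant and factorization) is written down for each curve in Table \ref{ell_curves} and its relevant twists, so that, for instance, $M_E(2)=\Q$, $\Q(i)$, or $\Q(\sqrt{-s})$ for $y^2=x^3+sx$ according to whether $-s$, $s$, or neither of these is a square.

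The step I expect to be the main obstacle is the group theory over $\Z/2^n\Z$: the Cartan and its normalizer at $2$ are genuinely more delicate than at odd primes --- non-cyclic unit groups, possibly non-maximal orders $\mathcal{O}_{K,f}\otimes\Z_2$, and low-level coincidences such as $\sqrt{2}\in\Q(\zeta_8)$ and $\Q(\zeta_4)\subseteq\Q(\zeta_8)$ that make $n=1,2,3$ behave differently from $n\gg0$ --- so the image of $(1-w)$ on the Cartan, and hence the structure of $G_{E,2^n}^{\text{ab}}$, must be computed (or machine-verified) separately for each of the five ramified discriminants and for the small exponents. A secondary difficulty is the bookkeeping for the twist: one must check that $\alpha(E)$ is well defined up to squares despite the finer ambiguity in the twisting parameter (which is only well defined up to $4$-th powers when $j(E)=1728$), that $\alpha$ genuinely records the position of $G_{E,2^n}$ in $\mathcal{N}_{\delta,\phi}(2^n)$, and that passing to a twist introduces no ramification away from $2$ beyond the prime(s) recorded by $\alpha$, so that $M_E(2^n)$ is no larger than stated.
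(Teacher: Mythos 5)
Your overall framework matches the paper's: both compute $M_E(2^n)$ via the abelianization $\Gal(M_E(2^n)/\Q)\cong G_{E,2^n}/G'_{E,2^n}$, both lean on the classification of $2$-adic CM images from \cite{lozano-galoiscm}, both proceed discriminant by discriminant with separate treatment of small levels $n=1,2,3$, and both identify the abelian subfields explicitly from the cyclotomic tower, the CM field $K$, and the twist data $\alpha$ via the lemmas of Section~\ref{computing-alpha} and Kronecker--Weber. Where you diverge is the computation of $|G'_{E,2^n}|$: the paper never computes the commutator subgroup directly, but instead squeezes its order between an \emph{upper} bound (coming from the degree of known abelian subfields $K(\zeta_{2^{n}})$, $\Q(\zeta_{2^{n+1}},\sqrt{\alpha})$, etc.) and a matching \emph{lower} bound proved by induction on $n$, using Lemmas~\ref{pi_comm_surj}--\ref{comm_ker_size} to show $|\ker(\pi|_{G'})|\geq 2$ by exhibiting a single explicit commutator $Y_{2^{n+1}}/Y'_{2^{n+1}}\equiv I\bmod 2^n$ at each level (Section~\ref{main_method}). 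You instead propose a direct algebraic computation: since $G_{E,2^n}$ is a semidirect product of its Cartan part $C$ with an order-$2$ element $w$, the commutator is the image of the map $c\mapsto c^{-1}c^w$ on $C$. This is a legitimate and arguably more structural route that would give $|G'_{E,2^n}|$ exactly without the two-sided squeeze; what it buys is a single computation rather than an inductive climb, at the cost of doing delicate unit-group arithmetic in $(\mathcal{O}_{K,f}/2^n\mathcal{O}_{K,f})^\times$ in the ramified (and non-maximal-order) cases.

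Two caveats. First, your aside ``equivalently, the cokernel of the norm $(\mathcal{O}_{K,f}/2^n)^\times\to(\Z/2^n\Z)^\times$'' is not literally correct: the image of $c\mapsto c^{-1}c^w$ is a subgroup of $\ker(N)$, while $\mathrm{coker}(N)$ lives on the other side of the norm. What you actually want is $|C/\mathrm{im}(1-w)| = |C^w|$ (the $w$-invariants), which agrees with $|\mathrm{coker}(N)|\cdot|\ker(N)/\mathrm{im}(1-w)|^{-1}\cdot\dots$ only after invoking the triviality of the Herbrand quotient; stating this cleanly is part of the ``$2$-adic subtleties'' you correctly flag. Second, and more importantly, the proposal remains a plan: you explicitly defer the decisive computation (the image of $(1-w)$ for each of the five ramified discriminants and for $n\le 4$) to a machine check that you have not carried out, which is precisely the content the paper supplies via its base-case \texttt{Magma} computations and the explicit choice of $\kappa\in\ker(\pi)$ in the inductive step. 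So the route is sound and genuinely different in its key mechanism, but it is not yet a proof.
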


\subsection{Structure of the paper} The paper is organized as follows. In Section \ref{examples} we provide examples of elliptic curves that satisfy the conditions of Theorems \ref{pndivDeltaKf}, \ref{pdivDeltaKf}, \ref{3divDeltaKf}, and \ref{casep2}. In Section \ref{background}, we establish notation, recall some results about known abelian extensions contained in division fields, state preliminary results on $\alpha$ in division fields, recall important results about the classification of $\ell$-adic images, and state some preliminary results about commutator subgroups. In Section \ref{main_method}, we outline the main method used throughout the proofs of the main results. In Section \ref{proofs_main_results}, we prove the main results: Theorem \ref{pndivDeltaKf} is shown in Section \ref{sect-pndivdisc}, Theorems \ref{pdivDeltaKf} and \ref{3divDeltaKf} are shown in Section \ref{sect-pdivdisc}, and Theorem \ref{casep2} is shown in Section \ref{sect-casep2}.

\subsection{Code} The \verb|Magma| \cite{Magma} code verifying the computational claims made in the paper can be found at GitHub repository \cite{GitHubPaperCode},
\begin{center}
    \url{https://github.com/asiminah/max-ab-extn-contained-in-div-flds}, 
\end{center}
which also includes code to compute $\alpha$, which is described in Definition \ref{defn-alpha}, and code to compute $M_E(p^n)$ for $E/\Q$ with CM and $p$ prime.

\begin{ack} The author would like to thank \'Alvaro Lozano-Robledo for helpful comments and suggestions, and Benjamin York for clarifying conversations.
\end{ack}


\section{Examples}\label{examples}

In this section, we provide examples of elliptic curves (with LMFDB labels, \cite{lmfdb}) that satisfy the conditions of Theorems \ref{pndivDeltaKf}, \ref{pdivDeltaKf}, \ref{3divDeltaKf}, and \ref{casep2}. All examples can be verified using the code in \cite{GitHubPaperCode}.

\subsection{}
The following is an example of Theorem \ref{pndivDeltaKf}. 
\begin{example}
    Let $E/\Q: y^2=x^3-35x+98$ (\href{https://www.lmfdb.org/EllipticCurve/Q/784/f/4}{784.f1}), where $j(E) = -3375$. Here $K=\Q(\sqrt{-7})$. In this case, $G_{E,5^n} = \mathcal{N}_{\delta,0}(5^n)$, where $\delta = -7/4$. Thus, $M_E(5^n) = K(\zeta_{5^n})$.
\end{example}

\subsection{}
The following is an example of Theorem \ref{pdivDeltaKf}. 
\begin{example}
    Let $E/\Q: y^2=x^3-140x-784$ (\href{https://www.lmfdb.org/EllipticCurve/Q/3136/n/4}{3136.n4}), where $j(E) = -3375$. Here $K=\Q(\sqrt{-7})$. In this case, $G_{E,7^n} = \mathcal{N}_{\delta,0}(7^n)$, where $\delta = -7/4$. Note that $E$ is a quadratic twist of $E_{-7,1}/\Q: y^2 = x^3 - 1715x + 33614$ (\href{https://www.lmfdb.org/EllipticCurve/Q/49/a/2}{49.a2}) by $-14$. Thus, $M_E(7^n) = \Q(\zeta_{7^n},\sqrt{-14})$. The simplest CM curve $E_{-7,1}$ has image 
    \[
    G_{E_{-7,1},7^n} = \left\langle \begin{pmatrix}-1&0\\0&1\end{pmatrix}, \left\{ \begin{pmatrix}a&b\\\delta b&a\end{pmatrix}: a \in {(\Z/7\Z)^\times}^2, b \in \Z/7\Z \right\} \right\rangle \subseteq \GL(2,\Z/7^n\Z),
    \]
    which is an index 2 subgroup of $\mathcal{N}_{\delta,0}(7^n)$. Thus, $M_{E_{-7,1}}(7^n) = \Q(\zeta_{7^n})$.
\end{example}

\subsection{}
The following is an example of Theorem \ref{3divDeltaKf}. 
\begin{example}
	Let $E/\mathbb{Q}: y^2 = x^3 - 6$ (\href{https://www.lmfdb.org/EllipticCurve/Q/15552/z/1}{15552.z1}), where $j(E)=0$. Here $K=\Q(\sqrt{-3})\subseteq \Q(\zeta_{3^n})$. The image
	\[
		G_{E,3^n} = \left\langle \begin{pmatrix}-1&0\\0&1\end{pmatrix}, \begin{pmatrix}2&0\\0&2\end{pmatrix}, \begin{pmatrix}1&1\\-3/4&1\end{pmatrix} \right\rangle \subseteq \operatorname{GL}(2,\mathbb{Z}/3^n\mathbb{Z})
	\]
	is an index 3 subgroup of $\mathcal{N}_{\delta,0}(3^n)$. Note that $E$ is a sextic twist of $E_{-3,1}/\mathbb{Q} : y^2 = x^3 + 16$ (\href{https://www.lmfdb.org/EllipticCurve/Q/27/a/4}{27.a4}) by $-24$. Thus, $M_E(3^n) = \mathbb{Q}(\zeta_{3^n},\sqrt{-24}) = \mathbb{Q}(\zeta_{3^n},\sqrt{2})$. The simplest CM curve $E_{-3,1}$ has image 
	\[
		G_{E_{-3,1},3^n} = \left\langle \begin{pmatrix}-1&0\\0&1\end{pmatrix}, \left\{\begin{pmatrix}a&b\\-3b/4&a\end{pmatrix} : a \equiv 1, b\equiv 0 \bmod 3 \right\} \right\rangle \subseteq \operatorname{GL}(2,\mathbb{Z}/3^n\mathbb{Z}),
	\]
	which is an index 6 subgroup of $\mathcal{N}_{\delta,0}(3^n)$. Thus, $M_{E_{-3,1}}(3^n) = \mathbb{Q}(\zeta_{3^n})$.
\end{example}

\subsection{}
The following are examples of Theorem \ref{casep2}, part (a), (b)(i), (b)(ii), and (b)(iii), respectively.
\begin{example}
    Let $E/\Q: y^2+y=x^3-x^2-7x+10$ (\href{https://www.lmfdb.org/EllipticCurve/Q/121/b/2}{121.b2}), where $j(E) = -32768$. Here $K=\Q(\sqrt{-11})$. In this case, $G_{E,2^n} = \mathcal{N}_{\delta,0}(2^n)$, where $\delta = -11/4$. Thus, $M_E(2^n) = K(\zeta_{2^n})$.
\end{example}

\begin{example}
    Let $E/\Q: y^2=x^3-15x+22$ (\href{https://www.lmfdb.org/EllipticCurve/Q/36/a/2}{36.a2}), where $j(E) = 54000$. Here $K = \Q(\sqrt{-12})$. In this case, $G_{E,2^n} = \mathcal{N}_{\delta, 0}(2^n)$, where $\delta = -3$. Thus, $M_E(2^n) = K(\zeta_{2^{n+1}})$.
\end{example}

\begin{example}
		Let $E/\mathbb{Q}: y^2 = x^3 + 9x$ (\href{https://www.lmfdb.org/EllipticCurve/Q/576/c/4}{576.c4}), where $j(E)=1728$. Here $K=\Q(\sqrt{-4}) \subseteq \Q(\zeta_{2^{n+1}})$. The image
	\[
		G_{E,2^n} = \left\langle \begin{pmatrix}0&1\\1&0\end{pmatrix}, \begin{pmatrix}-1&0\\0&-1\end{pmatrix}, \begin{pmatrix}3&0\\0&3\end{pmatrix}, \begin{pmatrix}1&2\\-2&1\end{pmatrix} \right\rangle \subseteq \operatorname{GL}(2,\mathbb{Z}/2^n\mathbb{Z})
	\]
	is an index 2 subgroup of $\mathcal{N}_{-1,0}(2^n)$. Note that $E$ is a quadratic twist of $E_{-4,1}/\mathbb{Q} : y^2 = x^3 + x$ (\href{https://www.lmfdb.org/EllipticCurve/Q/64/a/4}{64.a4}) by 3. Thus, $M_E(2^n) = \mathbb{Q}(\zeta_{2^{n+1}},\sqrt{3})$. The simplest CM curve $E_{-4,1}$ has image
		\[
			G_{E_{-4,1},2^n} = \left\langle \begin{pmatrix}0&-1\\-1&0\end{pmatrix}, \begin{pmatrix}5&0\\0&5\end{pmatrix}, \begin{pmatrix}1&2\\-2&1\end{pmatrix}\right\rangle \subseteq \operatorname{GL}(2,\mathbb{Z}/2^n\mathbb{Z}),
		\]
		which is an index 4 subgroup of $\mathcal{N}_{-1,0}(2^n)$. Thus, $M_{E_{-4,1}}(2^n) = \mathbb{Q}(\zeta_{2^{n+1}})$.
	\end{example}

\begin{example}
    Let $E/\Q: y^2=x^3-30x+56$ (\href{https://www.lmfdb.org/EllipticCurve/Q/2304/h/2}{2304.h2}), where $j(E) = 8000$. Here $K = \Q(\sqrt{-8})$. In this case, $G_{E,2^n} = \mathcal{N}_{\delta,0}(2^n)$, where $\delta = -2$. Note that $E$ is a quadratic twist of $E_{-8,1}/\Q: y^2 = x^3 - 4320x + 96768$ (\href{https://www.lmfdb.org/EllipticCurve/Q/256/a/2}{256.a2}) by $12$. Thus, $M_E(2^n) = \Q(\zeta_{2^{n+1}}, \sqrt{12}) = \Q(\zeta_{2^{n+1}}, \sqrt{3})$. The simplest CM curve $E_{-8,1}$ has image 
    \[
	G_{E_{-8,1},2^n} = \left\langle \begin{pmatrix}-1&0\\0&1\end{pmatrix},  \begin{pmatrix}3&0\\0&3\end{pmatrix}, \begin{pmatrix}-1&-1\\2&-1\end{pmatrix} \right\rangle \subseteq \operatorname{GL}(2,\mathbb{Z}/2^n\mathbb{Z}),
    \]
    which is an index 2 subgroup of $\mathcal{N}_{\delta,0}(2^n)$. Thus, $M_{E_{-8,1}}(2^n) = \Q(\zeta_{2^{n+1}})$.
\end{example}


\section{Background and Preliminary Results}\label{background}

In this section, we recall some results about abelian extensions contained in the $N$-division field of an elliptic curve with CM, discuss and state some preliminary results on computing $\alpha$, recall important results about the classification of $\ell$-adic images of Galois representations attached to elliptic curves with CM, and state some preliminary results relating to commutator subgroups. 


\subsection{Set-up and notation}\label{notation}

We begin by establishing the notation we will use repeatedly throughout the rest of the paper. Let $K$ be an imaginary quadratic field and let $\mathcal{O}_K$ be the ring of integers of $K$ with discriminant $\Delta_K$. Let $f\geq 1$ be an integer, and let $\mathcal{O}_{K,f}$ be the order of $K$ of conductor $f$, with discriminant $\Delta_Kf^2$. Let $j_{K,f}$ be a CM $j$-invariant associated to $\mathcal{O}_{K,f}$. We define associated constants $\delta$ and $\phi$ as follows:
\begin{itemize}
    \item If $\Delta_Kf^2 \equiv 0 \bmod 4$, or $N$ is odd, let $\delta = \Delta_Kf^2/4$ and $\phi = 0$.
    \item If $\Delta_Kf^2 \equiv 1 \bmod 4$, and $N$ is even, let $\delta = \frac{(\Delta_K - 1)}{4}f^2$, let $\phi =f$.
\end{itemize}
We define matrices in $\GL(2,\Z/N\Z)$ by 
\[
c_\varepsilon = \begin{pmatrix}-\varepsilon&0\\\phi&\varepsilon\end{pmatrix} \quad \text{and} \quad c_{\delta,\phi}(a,b) = \begin{pmatrix}a+b\phi&b\\\delta b&a\end{pmatrix},
\]
where $\varepsilon \in \{\pm 1\}$ and $a,b \in \Z/N\Z$ such that $\det(c_{\delta,\phi}(a,b)) \in (\Z/N\Z)^\times$. We define the Cartan subgroup $\mathcal{C}_{\delta,\phi}(N)$ of $\GL(2,\Z/N\Z)$ by
\[
\mathcal{C}_{\delta,\phi}(N) = \{ c_{\delta,\phi}(a,b) : a,b \in \Z/N\Z, \det(c_{\delta,\phi}(a,b)) \in (\Z/N\Z)^\times \}.
\]
We define the subgroup $\mathcal{N}_{\delta,\phi}(N) = \langle \mathcal{C}_{\delta,\phi}(N), c_1\rangle$ of $\GL(2,\Z/N\Z)$  and sometimes call $\mathcal{N}_{\delta,\phi}(N)$ the ``normalizer" of $\mathcal{C}_{\delta,\phi}(N)$ in $\GL(2,\Z/N\Z)$. In the case that $N$ is prime, $\mathcal{N}_{\delta,\phi}(N)$ is a true normalizer, but it may not be the actual normalizer for composite values of $N$ (see \cite{lozano-galoiscm}, Section 5). Lastly, we write $\mathcal{N}_{\delta,\phi} = \varprojlim \mathcal{N}_{\delta,\phi}(N)$ and regard it as a subgroup of $\GL(2,\widehat{\Z})$.

Let $E/\Q(j_{K,f})$ be an elliptic curve with CM by $\mathcal{O}_{K,f}$, let $N\geq 3$, and let $\rho_{E,N}$ be the Galois representation $\Gal(\overline{\Q(j_{K,f}})/\Q(j_{K,f})) \to \Aut(E[N]) \cong \GL(2,\Z/N\Z)$. Similarly, for a prime $p$, let $\rho_{E,p^\infty}$ be the Galois representation $\Gal(\overline{\Q(j_{K,f}})/\Q(j_{K,f})) \to \Aut(T_p(E)) \cong \GL(2,\Z_p)$. We define the image $G_{E,N} \coloneqq \Gal(\Q(j_{K,f},E[N])/\Q(j_{K,f})) = \operatorname{im} \rho_{E,N}$, and similarly, $G_{E,p^\infty} \coloneqq \operatorname{im} \rho_{E,p^\infty}$.


\subsection{Abelian extensions contained in division fields}

Let $F$ be a number field and let $E/F$ be an elliptic curve. We recall some results about abelian extensions contained in $F(E[N])/F$, where $F(E[N])$ denotes the $N$-division field of $E/F$.

\begin{lemma}\label{rootsofunity_in_divfld}
    Let $K$ be an imaginary quadratic field, let $F$ be a number field, let $E/F$ be an elliptic curve with CM by $\mathcal{O}_{K,f}$, and let $N\geq 2$ be an integer. Then, $F(\zeta_N) \subseteq F(E[N])$. 
\end{lemma}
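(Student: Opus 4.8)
The plan is to deduce the inclusion $F(\zeta_N) \subseteq F(E[N])$ from the existence of the Weil pairing, exactly as in the classical case over $\mathbb{Q}$ recalled in the introduction. First I would recall that the Weil pairing is a Galois-equivariant, bilinear, alternating, non-degenerate pairing
\[
e_N \colon E[N] \times E[N] \longrightarrow \mu_N,
\]
where $\mu_N \subseteq \overline{F}^\times$ is the group of $N$-th roots of unity; this is \cite[Ch.~III, Cor.~8.1.1]{silverman1}, which is already cited in the excerpt for the statement $F(\zeta_N)\subseteq F(E[N])$. Note that this input makes no use of complex multiplication; the hypothesis that $E$ has CM by $\mathcal{O}_{K,f}$ is harmless but not needed for this particular lemma.

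Next I would pick a basis $P, Q$ of $E[N]$ as a free $\mathbb{Z}/N\mathbb{Z}$-module and set $\zeta = e_N(P,Q)$. Non-degeneracy of $e_N$ together with bilinearity shows that $\zeta$ is a primitive $N$-th root of unity: indeed the image of $e_N$ is all of $\mu_N$, and since $e_N$ is bilinear and $E[N]$ is generated by $P$ and $Q$, every value $e_N(R,S)$ lies in the subgroup generated by $e_N(P,Q) = \zeta$; as this subgroup must be all of $\mu_N$, the element $\zeta$ has exact order $N$, i.e. $\zeta = \zeta_N$ is a primitive $N$-th root of unity (up to the usual choice of primitive root).

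The final step is the Galois-descent argument. Let $\sigma \in \Gal(\overline{F}/F(E[N]))$. By definition of the division field, $\sigma$ fixes $P$ and $Q$, so Galois-equivariance of the Weil pairing gives
\[
\sigma(\zeta_N) = \sigma\big(e_N(P,Q)\big) = e_N\big(\sigma(P), \sigma(Q)\big) = e_N(P,Q) = \zeta_N.
\]
Hence $\zeta_N$ is fixed by every element of $\Gal(\overline{F}/F(E[N]))$, so $\zeta_N \in F(E[N])$, and therefore $F(\zeta_N) \subseteq F(E[N])$.

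There is no serious obstacle here: the lemma is a standard consequence of the properties of the Weil pairing, and the only points requiring a word of justification are that $e_N$ is Galois-equivariant (cited) and that a value on a basis is a primitive root of unity (non-degeneracy plus bilinearity). I would keep the proof to a few lines, citing \cite{silverman1} for the properties of $e_N$.
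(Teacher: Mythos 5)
Your proof is correct and is essentially the same approach as the paper, which simply cites \cite[III, Corollary 8.1.1]{silverman1}; you have unpacked the standard Weil-pairing argument (Galois-equivariance plus non-degeneracy on a basis) that lies behind that citation. You also rightly note that the CM hypothesis is not used here.
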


\begin{proof}
    See \cite[III, Corollary 8.1.1]{silverman1}. 
\end{proof}

In particular, if $E$ is defined over $\Q$, then $\Q(\zeta_N) \subseteq \Q(E[N])$ for any $N\geq 2$. In \cite{gonzalez-jimenez-lozano-robledo}, Gonz\'alez-Jim\'enez and Lozano-Robledo prove that $\Q(E[N]) = \Q(\zeta_N)$ only for $N = 2,3,4$, or $5$, and in the CM case, the list shortens to $N= 2$ or $3$. When considering the $2$-division field of an elliptic curve $E/\Q$ with CM, it is possible that there are more roots of unity contained in $\Q(E[2^n])$ for $n\geq 2$. 

\begin{lemma}[\cite{jones2023cm}, Theorem 1.1]\label{moreroots_in_divfld}
    Let $E$ be any elliptic curve over $\Q$ with CM by an order $\mathcal{O}_{K,f}$ in an imaginary quadratic field $K$. Assuming that the discriminant $\Delta_Kf^2$ of $\mathcal{O}_{K,f}$ is even, we have that, for each $n\in \mathbb{N}_{\geq 2}$, that $\Q(\zeta_{2^{n+1}}) \subseteq \Q(E[2^n])$.
\end{lemma}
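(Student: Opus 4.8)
The plan is to convert the field containment into an inclusion of Galois kernels and then settle it by a short $2$-adic computation inside the Cartan normalizer. Write $\chi_m\colon\Gal(\overline{\Q}/\Q)\to(\Z/m\Z)^\times$ for the mod-$m$ cyclotomic character, so that $\Q(\zeta_{2^{n+1}})$ is the fixed field of $\ker\chi_{2^{n+1}}$ while $\Q(E[2^n])$ is the fixed field of $\ker\rho_{E,2^n}$. Thus the assertion is equivalent to $\ker\rho_{E,2^n}\subseteq\ker\chi_{2^{n+1}}$, i.e.\ to the claim that $\chi_{2^{n+1}}(\sigma)=1$ whenever $\sigma$ acts trivially on $E[2^n]$. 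The two external inputs I would use are the Weil-pairing identity $\det\rho_{E,m}=\chi_m$ (for every $m$) and Theorem~\ref{gal-repns-att-to-ec-w-cm}, which gives $G_{E,m}\subseteq\mathcal{N}_{\delta,\phi}(m)$ for every $m$.

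First I would unwind the hypothesis. A fundamental discriminant is $\equiv 0$ or $1\bmod 4$, and $f^{2}\equiv 0$ or $1\bmod 4$, so ``$\Delta_Kf^2$ even'' actually forces $\Delta_Kf^2\equiv 0\bmod 4$; hence in the notation of Section~\ref{notation} we are always in the branch $\phi=0$, $\delta=\Delta_Kf^2/4$ --- for every level, in particular for $2^n$ and for $2^{n+1}$. Now fix $\sigma\in\ker\rho_{E,2^n}$. Then $\rho_{E,2^{n+1}}(\sigma)$ reduces to the identity modulo $2^n$ and lies in $\mathcal{N}_{\delta,0}(2^{n+1})$. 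I claim it lies in the Cartan $\mathcal{C}_{\delta,0}(2^{n+1})$: any element of the nontrivial coset is $c_1\,c_{\delta,0}(a,b)$, whose diagonal entries (as $\phi=0$) are $-a$ and $a$, so reducing it to the identity modulo $2^n$ would force $-a\equiv a\equiv 1\pmod{2^n}$, i.e.\ $2\equiv 0\pmod{2^n}$, which is impossible for $n\ge 2$. (This is exactly where the hypothesis $n\ge 2$ is used.) Hence $\rho_{E,2^{n+1}}(\sigma)=c_{\delta,0}(a,b)$ with $a\equiv 1\pmod{2^n}$ and $b\equiv 0\pmod{2^n}$, and $\det c_{\delta,0}(a,b)=a^{2}-\delta b^{2}$.

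Finally I would compute this determinant. Writing $a=1+2^nu$ and $b=2^nv$, we get $\chi_{2^{n+1}}(\sigma)=\det\rho_{E,2^{n+1}}(\sigma)=a^{2}-\delta b^{2}=1+2^{n+1}u+2^{2n}(u^{2}-\delta v^{2})\equiv 1\pmod{2^{n+1}}$ because $2n\ge n+1$. Thus $\chi_{2^{n+1}}(\sigma)=1$, which is what we wanted. (Conceptually: under the identification $\mathcal{C}_{\delta,0}(2^{n+1})\cong(\Z[\sqrt{\delta}]/2^{n+1})^\times$, with $\det$ playing the role of the norm, an element congruent to $1$ modulo $2^n$ has norm congruent to $1$ modulo $2^{n+1}$, because every element of $\Z[\sqrt{\delta}]$ has even trace.)

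I do not expect a genuine obstacle; the content is the two elementary steps above. The point worth flagging is conceptual rather than technical: the even-discriminant hypothesis is precisely what produces $\phi=0$. For odd $\Delta_Kf^2$ one instead has $\phi=f\neq 0$, the relevant trace becomes odd, and $\det\rho_{E,2^{n+1}}(\sigma)$ can legitimately equal $1+2^n$ rather than $1$ --- so the extra root of unity need not appear, which is exactly why the lemma restricts to even discriminants (while the weaker containment $\Q(\zeta_{2^n})\subseteq\Q(E[2^n])$ of Lemma~\ref{rootsofunity_in_divfld} holds unconditionally, being just $\det\rho_{E,2^n}=\chi_{2^n}$). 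As a sanity check, for $E_{-4,1}$ the argument forces $\Q(\zeta_8)\subseteq\Q(E[4])$, and here one in fact has equality, consistent with $[\mathcal{N}_{-1,0}(2^n):G_{E_{-4,1},2^n}]=4$ recorded in Section~\ref{examples}.
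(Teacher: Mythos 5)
Your proof is correct, but note that the paper itself does not supply a proof of this lemma: it is imported as a black box, cited to Jones \cite{jones2023cm}, Theorem~1.1, so there is no in-paper argument to compare against. Your derivation is a clean, self-contained alternative built entirely from ingredients already set up in Section~\ref{background}: the Weil-pairing identity $\det\rho_{E,m}=\chi_m$, and the containment $G_{E,m}\subseteq\mathcal{N}_{\delta,\phi}(m)$ of Theorem~\ref{gal-repns-att-to-ec-w-cm}. The key observations --- that ``$\Delta_Kf^{2}$ even'' forces $\Delta_Kf^{2}\equiv 0\bmod 4$ and hence $\phi=0$, that the diagonal entries $(-a,a)$ of a coset element $c_{1}c_{\delta,0}(a,b)$ rule out the nontrivial coset of the Cartan whenever $n\geq 2$, and that $a^{2}-\delta b^{2}\equiv 1\bmod 2^{n+1}$ once $a\equiv 1$, $b\equiv 0\bmod 2^{n}$ --- are all correct, and your closing remarks accurately locate where both the hypothesis $n\geq 2$ and the even-discriminant hypothesis enter (the latter precisely through $\phi=0$, equivalently $\mathcal{O}_{K,f}=\Z[\sqrt{\delta}]$ having all traces even). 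One small point worth making explicit: passing from $\sigma\in\ker\rho_{E,2^{n}}$ to ``$\rho_{E,2^{n+1}}(\sigma)$ reduces to the identity mod $2^{n}$ and lies in $\mathcal{N}_{\delta,0}(2^{n+1})$'' uses the \emph{compatible} system of bases guaranteed by Theorem~\ref{gal-repns-att-to-ec-w-cm}.(2), not merely part (1) applied separately at levels $2^{n}$ and $2^{n+1}$; you invoke both inputs but only cite the level-$m$ containment, so it would be cleaner to point to part (2) when you write down the reduction.
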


Depending on whether the discriminant $\Delta_Kf^2$ of $\mathcal{O}_{K,f}$ is even or not, it follows that either $\Q(\zeta_{2^{n+1}})$ or $\Q(\zeta_{2^n})$ is contained in $\Q(E[2^n])$, respectively. For an elliptic curve $E$ with CM by an order in an imaginary quadratic field $K$, we also have that the CM field $K$ is contained in $\Q(E[N])$ for $N \geq 3$.

\begin{lemma}[\cite{bourdon2017torsion}, Lemma 3.15]\label{CMfld_in_divfld}
    Let $K$ be an imaginary quadratic field, let $F$ be a number field, let $E/F$ be an elliptic curve with CM by $\mathcal{O}_{K,f}$, and let $N\geq 3$ be an integer. Then, $K\subseteq F(E[N])$. In particular, if $E$ is defined over $\Q(j_{K,f})$, then $K \subseteq K(j_{K,f}) \subseteq \Q(j_{K,f},E[N])$ for any $N\geq 3$.
\end{lemma}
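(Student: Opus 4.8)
The strategy is to play the Galois action on $E[N]$ against the Galois action on the geometric endomorphism ring $\End(E_{\overline F})\cong\mathcal O_{K,f}$, and argue by contradiction. Suppose $K\not\subseteq F(E[N])$; then, since $F\subseteq F(E[N])$ and $K/\Q$ is quadratic, the compositum $FK$ is not contained in $F(E[N])$, so there exists $\sigma\in\Gal(\overline F/F(E[N]))$ with $\sigma|_K\neq\mathrm{id}$, i.e., $\sigma$ restricts to complex conjugation on $K$. By construction $\sigma$ fixes every point of $E[N]$.

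The next step is to invoke two standard facts from the theory of complex multiplication: (i) every endomorphism of $E_{\overline F}$ is defined over $FK$, and conversely a curve over a field $L$ admitting a non-scalar endomorphism has $K\subseteq L$ (such an endomorphism acts on the space of invariant differentials by a scalar that generates $K$ over $\Q$); and (ii) $\Gal(\overline F/F)$ acts on $\End(E_{\overline F})$ through ring automorphisms of $\mathcal O_{K,f}$, of which the only nontrivial one is complex conjugation. Since our $\sigma$ does not fix $K$, by (i) it does not act trivially on $\End(E_{\overline F})$, hence by (ii) the induced automorphism of $\mathcal O_{K,f}$ is complex conjugation: $\phi^\sigma=\overline\phi$ for every $\phi\in\End(E_{\overline F})=\mathcal O_{K,f}$.

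Now I would combine these. For any $\phi\in\End(E_{\overline F})$ and $P\in E[N]$ we have $\phi(P)\in E[N]$, hence $\sigma(\phi(P))=\phi(P)$; on the other hand the twisted functoriality $\sigma(\phi(P))=\phi^\sigma(\sigma(P))=\overline\phi(P)$. Therefore $(\phi-\overline\phi)(P)=O$ for all $P\in E[N]$, so the endomorphism $\phi-\overline\phi$ annihilates $E[N]$; in characteristic zero (as $[N]$ is separable with kernel $E[N]$) this forces $\phi-\overline\phi\in N\cdot\End(E_{\overline F})=N\mathcal O_{K,f}$. Applying this to the standard generator $\omega=\frac{d+\sqrt d}{2}$ of $\mathcal O_{K,f}=\Z[\omega]$, with $d=\Delta_Kf^2$, gives $\omega-\overline\omega=\sqrt d=2\omega-d\in N\mathcal O_{K,f}=N\Z+N\Z\omega$, which is impossible for $N\geq3$ since it would require $N\mid2$. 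This contradiction proves $K\subseteq F(E[N])$. The ``in particular'' clause is then immediate: taking $F=\Q(j_{K,f})$ yields $K\subseteq\Q(j_{K,f},E[N])$, and since $j_{K,f}$ lies in this field as well, the compositum $K(j_{K,f})=K\cdot\Q(j_{K,f})$ is contained in $\Q(j_{K,f},E[N])$.

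The one point requiring genuine care is the identification in the second step: that $\sigma|_K\neq\mathrm{id}$ forces the action of $\sigma$ on $\End(E_{\overline F})$ to be complex conjugation (equivalently, that the quadratic field cut out by the action on endomorphisms is exactly $FK$, rather than some other quadratic extension). This is a standard but not entirely trivial consequence of CM theory; it is worth noting that it needs no choice of normalization of the isomorphism $\mathcal O_{K,f}\xrightarrow{\sim}\End(E_{\overline F})$, since complex conjugation is the unique nontrivial ring automorphism of $\mathcal O_{K,f}$. Everything else in the argument is a two-line computation.
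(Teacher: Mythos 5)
Your proposal is correct, and it reconstructs the standard argument: play the Galois action on $E[N]$ off against the Galois action on $\End(E_{\overline F})\cong\mathcal{O}_{K,f}$, use that fixing $E[N]$ pointwise forces $\phi-\overline\phi\in N\mathcal{O}_{K,f}$ for all $\phi$, and observe that this fails already for a generator of $\mathcal{O}_{K,f}$ once $N\geq 3$. A few specific checks: the reduction ``$\psi$ kills $E[N]$, hence $\psi\in N\cdot\End(E)$'' is valid in characteristic zero because $[N]$ is separable with kernel $E[N]$ and $\End(E)$ is commutative, so $\psi$ factors as $\psi'\circ[N]=N\psi'$; the identification of the nontrivial induced automorphism of $\mathcal{O}_{K,f}$ with complex conjugation is indeed canonical and needs no normalization, exactly as you flag; and the final coefficient comparison $\sqrt{d}=-d+2\omega$ against $N\Z+N\Z\omega$ gives $N\mid 2$, which is the desired contradiction. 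Note that the paper itself does not supply a proof of this lemma---it is cited to Bourdon--Pollack, Lemma 3.15---and the argument there is precisely this commutation-with-endomorphisms argument, so you are not diverging from the source but rather filling in the proof the paper delegates.
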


By Lemmas \ref{rootsofunity_in_divfld} and \ref{CMfld_in_divfld}, we obtain that for an elliptic curve $E/\Q$ with CM, an abelian extension contained in $\Q(E[N])/\Q$ is $K(\zeta_N)$ for $N\geq 3$. In the following section, we outline how to compute another abelian extension contained in $\Q(E[N])/\Q$.


\subsection{Preliminary results on computing $\alpha$}\label{computing-alpha}

Up to isomorphism over $\overline{\Q}$, there are thirteen elliptic curves $E/\Q$ with CM. In Table \ref{ell_curves} below (copied from \cite{zywina}), we give an elliptic curve $E_{\Delta_K,f}/\Q$ with each of these thirteen $j$-invariants. 
The curve $E_{\Delta_K,f}$ has complex multiplication by an order $\mathcal{O}_{K,f}$ of $K$ of conductor $f\geq 1$, where the discriminant of $K$ is $\Delta_K$.

{
\renewcommand{\arraystretch}{1.1}

\begin{table}[h]
\begin{center}\begin{tabular}{|c|c|c|l|}\hline  
$j$-invariant & $\Delta_K$ &  $f$ &  Elliptic curve $E_{\Delta_K,f}$ \\ \hline\hline
$0$ &$-3$ & $1$ &  $y^2=x^3+16$ \\  
$2^4 3^3 5^3$ & & $2$ & $y^2=x^3-15x+22$ \\  
 $-2^{15} 3 \cdot 5^3$ & &  $3$ &  $y^2=x^3-480x+4048$  \\ \hline 
$2^6 3^3$ & $-4$ & $1$ &  $y^2=x^3+x$  \\   
 $2^3 3^3 11^3$ &  & $2$ &  $y^2=x^3-11x+14$  \\ \hline 
$-3^3 5^3$ & $-7$ & $1$ &  $y^2=x^3-1715x+33614$  \\  
$3^3 5^3 17^3$ & &  $2$ & $y^2=x^3-29155x+1915998$   \\ \hline 
 $2^6 5^3$ & $-8$ & $1$ & $y^2=x^3-4320x+96768$   \\  \hline 
$-2^{15}$ & $-11$ & $1$ & $y^2=x^3-9504x+365904$  \\ \hline 
$-2^{15} 3^3$ & $-19$ & $1$ & $y^2=x^3-608x+5776$   \\ \hline 
$-2^{18} 3^3 5^3$ & $-43$ & $1$ & $y^2=x^3-13760x+621264$   \\ \hline 
$-2^{15} 3^3 5^3 11^3$ & $-67$ & $1$ & $y^2=x^3-117920x+15585808$  \\ \hline  
$-2^{18} 3^3 5^3 23^3 29^3$ & $-163$ & $1$ & $y^2=x^3-34790720x+78984748304$ \\ \hline
\end{tabular} 
\end{center}
\caption{CM elliptic curves over $\Q$}\label{ell_curves}
\end{table}
}

Recall from Proposition 1.4.(b) of \cite{silverman1} that two elliptic curves are isomorphic over $\overline{\Q}$ if and only if they both have the same $j$-invariant. Let $E/\Q$ be an elliptic curve with CM by $\mathcal{O}_{K,f}$. Then, if $j_{K,f} \neq 0, 1728$, there is a unique, square-free $d\in \Z$ such that $E^d$ is a twist of $E$ by $d$ that gives us the curve $E_{\Delta_K,f}$ in Table \ref{ell_curves} which has the same $j$-invariant as $E$. Note that if $j_{K,f}=0$, then $d\in \Z$ is $6$-th power-free, and if $j_{K,f}=1728$, then $d\in \Z$ is $4$-th power-free. Suppose
\begin{align*}
    E : y^2 &= x^3 + Ax + B, \\
    E_{\Delta_K,f} : y^2 &= x^3 + A'x + B'.
\end{align*}
We will define $\alpha = \alpha(E)$ in terms of the twisting factor $d$, where $\sqrt{\alpha} \in \Q(E[N])$ for some integer $N\geq 1$. Note that in many cases $\alpha = d$, but that is not always the case.

\begin{definition}\label{defn-alpha}
    Given an elliptic curve $E/\Q$ with CM as above, we can define $\alpha = \alpha(E)$ as follows:
\begin{enumerate}
    \item[(1)] If $j(E) \neq 0,1728$, then $E$ is a quadratic twist of $E_{\Delta_K,f}$ by some $d \in \Z$, unique up to squares. In particular, $d^2 = A/A'$ and $d^3 = B/B'$. Let $c$ be the sign of $B/B'$, i.e., $c = \operatorname{sgn}(B/B')$. 
    If $\sqrt{A/A'}$ is an integer, then $d = c\cdot \sqrt{A/A'}$, otherwise $d = c\cdot \sqrt{A/A'}\cdot A' = c\cdot \sqrt{AA'}$. 
    Therefore, $E$ is a quadratic twist of $E_{\Delta_K,f}$ by $d$, and we define $\alpha = d$. 

    \item[(2)] If $j(E) = 0$ ($A=A'=0$), then $E$ is a sextic twist of $E_{-3,1}$ by some $d \in \Z$, unique up to $6$-th powers. In particular, if $B/B' = B/16$ is an integer, then $d = B/16$, otherwise $d = B/16 \cdot 2^6 = 4B$. Therefore, $E$ is a twist of $E_{-3,1}$ by $d$, and we define $\alpha = d$.  

    \item[(3)] If $j(E) = 1728$ ($B=B'=0$), then $E$ is a quartic twist of $E_{-4,1}$ by some $d \in \Z$ unique up to $4$-th powers. In particular, $d = A/A' = A$. Therefore, $E$ is a twist of $E_{-4,1}$ by $d$. If $d = ct^2$ for some square-free integer $t \neq \pm 1, \pm 2$ and $c \in \{\pm 1, \pm 2\}$, then we define $\alpha = t$. Otherwise, we define $\alpha = d$.
\end{enumerate}
\end{definition}

Let $E$ be a quadratic twist of one of the curves in Table \ref{ell_curves}. The following result describes an abelian extension in terms of $\alpha$ that is contained in $\Q(E[N])/\Q$.

\begin{lemma}[\cite{gonzálezjiménez2024modelscmellipticcurves}, Lemma 3.4]\label{lem-twistimage}\label{sqrtd_in_divfld}
    Let $E/F : y^2=x^3+Ax+B$ be an elliptic curve defined over a number field $F$, let $N>2$, and let $G_{E,N}$ be the image of $\rho_{E,N}\colon \Gal(\overline{F}/F)\to \GL(2,\Z/N\Z)$. Let $\alpha \in F$ and let $E^\alpha$ be the quadratic twist of $E$ by $\alpha$, i.e., $E^\alpha : \alpha y^2 = x^3+Ax+B$. Then,
    \begin{enumerate}
        \item $F(E^\alpha[N])\subseteq F(\sqrt{\alpha},E[N])$. In particular, if $\sqrt{\alpha}\in F(E[N])$, then $F(E^\alpha[N])\subseteq F(E[N])$.
        \item If $\sqrt{\alpha}$ does not belong to $F(E[N])$, then $F(E^\alpha[N])=F(\sqrt{\alpha},E[N])$, and $G_{E^\alpha,N}$ is conjugate to $\langle -\operatorname{Id},G_{E,N}\rangle \subseteq \GL(2,\Z/N\Z)$.
    \end{enumerate}
\end{lemma}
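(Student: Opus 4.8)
The plan is to exploit that $E$ and the quadratic twist $E^\alpha$ become isomorphic over $L:=F(\sqrt\alpha)$, and to convert this isomorphism into information about $N$-division fields via the quadratic character cutting out $L/F$. First I would replace $E^\alpha\colon \alpha y^2=x^3+Ax+B$ by the $F$-isomorphic model $y^2=x^3+A\alpha^2x+B\alpha^3$ and record the isomorphism $\phi\colon E\to E^\alpha$, $\phi(x,y)=(\alpha x,\alpha^{3/2}y)$ with $\alpha^{3/2}:=\alpha\sqrt\alpha$, which is defined over $L$. Since $\phi$ carries $E[N]$ bijectively onto $E^\alpha[N]$ with coordinates lying in $L(E[N])=F(\sqrt\alpha,E[N])$, we obtain $F(E^\alpha[N])\subseteq L(E^\alpha[N])=L(E[N])=F(\sqrt\alpha,E[N])$, which is part (1); the ``in particular'' follows since $\sqrt\alpha\in F(E[N])$ forces $F(\sqrt\alpha,E[N])=F(E[N])$.

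For part (2) I would compute the twisting cocycle. Let $\chi_\alpha\colon\Gal(\overline F/F)\to\{\pm1\}$ be the quadratic character with fixed field $L$, so that $\sigma(\sqrt\alpha)=\chi_\alpha(\sigma)\sqrt\alpha$. A short calculation with the explicit formula for $\phi$ gives ${}^\sigma\phi\circ\phi^{-1}=[\chi_\alpha(\sigma)]\in\Aut(E^\alpha)$ for every $\sigma$ --- the scalar $[\pm1]$, and nothing larger even when $j(E)\in\{0,1728\}$, because a quadratic twist is classified by a cocycle valued in $\{\pm1\}\subseteq\Aut(E)$. Hence, for $Q\in E[N]$, $\sigma(\phi(Q))=\phi(\chi_\alpha(\sigma)\cdot\sigma Q)$, and transporting a $\Z/N\Z$-basis of $E[N]$ to $E^\alpha[N]$ along $\phi$ shows $\rho_{E^\alpha,N}(\sigma)=\chi_\alpha(\sigma)\,\rho_{E,N}(\sigma)$ in that basis. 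Therefore $G_{E^\alpha,N}$ is conjugate in $\GL(2,\Z/N\Z)$ to $H:=\{\chi_\alpha(\sigma)\rho_{E,N}(\sigma):\sigma\in\Gal(\overline F/F)\}$, and $H\subseteq\langle-\operatorname{Id},G_{E,N}\rangle$ unconditionally.

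Then I would feed in the hypothesis $\sqrt\alpha\notin F(E[N])$, which says exactly that $\chi_\alpha$ is non-trivial on $\ker\rho_{E,N}=\Gal(\overline F/F(E[N]))$: choosing $\sigma_0$ with $\rho_{E,N}(\sigma_0)=\operatorname{Id}$ and $\chi_\alpha(\sigma_0)=-1$ puts $-\operatorname{Id}=\chi_\alpha(\sigma_0)\rho_{E,N}(\sigma_0)$ in $H$. Since for each $g=\rho_{E,N}(\tau)\in G_{E,N}$ the element $\chi_\alpha(\tau)g\in H$ and hence also $-\chi_\alpha(\tau)g=(-\operatorname{Id})\chi_\alpha(\tau)g\in H$, and one of these two equals $g$, we get $G_{E,N}\subseteq H$, so $H=\langle-\operatorname{Id},G_{E,N}\rangle$; this is the claimed conjugacy. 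For the field equality, I would observe that $\ker\rho_{E^\alpha,N}\cap\ker\rho_{E,N}=\ker\rho_{E,N}\cap\ker\chi_\alpha$, so the compositum $F(E^\alpha[N])\cdot F(E[N])$ equals $F(\sqrt\alpha,E[N])$; combining this with the degree count $[F(E^\alpha[N]):F]=|H|=2|G_{E,N}|=[F(\sqrt\alpha,E[N]):F]$ --- where the last equality is linear disjointness of $F(\sqrt\alpha)$ and $F(E[N])$ (i.e.\ $\sqrt\alpha\notin F(E[N])$) and the middle one uses $-\operatorname{Id}\notin G_{E,N}$ --- and the inclusion from part (1), we get $F(E^\alpha[N])=F(\sqrt\alpha,E[N])$.

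The main obstacle is the $-\operatorname{Id}$ bookkeeping in the last step: the clean equality $F(E^\alpha[N])=F(\sqrt\alpha,E[N])$ genuinely relies on $-\operatorname{Id}\notin G_{E,N}$. Indeed, if $-\operatorname{Id}\in G_{E,N}$, then $|H|=|G_{E,N}|$ and one only concludes that the compositum $F(E^\alpha[N])\cdot F(E[N])$ equals $F(\sqrt\alpha,E[N])$, with $F(E^\alpha[N])$ sitting in it as an index-$2$ subfield; so the condition $-\operatorname{Id}\notin G_{E,N}$ (or its analogue) should be carried along, and it is satisfied in the applications of this lemma here, where $E$ is taken to be one of the base curves (or a minor variant) whose mod-$p^n$ image does not contain $-\operatorname{Id}$. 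One should also keep the restriction $N>2$ in mind, so that $\operatorname{Id}\neq-\operatorname{Id}$ and the twisted representation $\chi_\alpha\cdot\rho_{E,N}$ can genuinely differ from $\rho_{E,N}$ --- for $N=2$ a quadratic twist leaves the $2$-division field unchanged.
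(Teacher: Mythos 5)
Your proof is correct, and in fact the paper itself does not prove this lemma at all --- it is cited verbatim from Lemma~3.4 of \cite{gonzálezjiménez2024modelscmellipticcurves}, so there is no internal proof to compare against. Your argument is the standard one: part (1) is the coordinate computation for the isomorphism $\phi(x,y)=(\alpha x,\alpha\sqrt{\alpha}\,y)$ defined over $F(\sqrt{\alpha})$, and part (2) reads off the mod-$N$ representation of $E^\alpha$ as the character twist $\chi_\alpha\cdot\rho_{E,N}$ from the cocycle ${}^\sigma\phi\circ\phi^{-1}=[\chi_\alpha(\sigma)]$, which is indeed valued only in $\{\pm1\}$ even for $j\in\{0,1728\}$ because the twist is explicitly a quadratic one. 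The derivation of $H=\langle-\operatorname{Id},G_{E,N}\rangle$ from $\sqrt{\alpha}\notin F(E[N])$ is correct: that hypothesis is exactly non-triviality of $\chi_\alpha$ on $\ker\rho_{E,N}$, which forces the joint image of $(\rho_{E,N},\chi_\alpha)$ to be the full product $G_{E,N}\times\{\pm1\}$, and $H$ is its image under $(g,\varepsilon)\mapsto\varepsilon g$.

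You have also put your finger on a genuine subtlety: the field equality $F(E^\alpha[N])=F(\sqrt{\alpha},E[N])$ in part (2) does require $-\operatorname{Id}\notin G_{E,N}$. Indeed, if $-\operatorname{Id}\in G_{E,N}$ and $\sqrt{\alpha}\notin F(E[N])$, the same fiber-product argument produces a $\sigma$ with $\rho_{E,N}(\sigma)=-\operatorname{Id}$ and $\chi_\alpha(\sigma)=-1$, hence $\sigma\in\ker\rho_{E^\alpha,N}$ but $\sigma\notin\Gal(\overline{F}/F(\sqrt{\alpha},E[N]))$, so $F(E^\alpha[N])$ is a proper index-$2$ subfield of $F(\sqrt{\alpha},E[N])$; equivalently, the two conclusions of part (2) would then be mutually inconsistent since $\langle-\operatorname{Id},G_{E,N}\rangle=G_{E,N}$ would have the wrong order. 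So the hypothesis is not a cosmetic bookkeeping convenience but a real necessary condition, implicit in the cited lemma and, as you correctly observe, satisfied wherever the present paper invokes it (the base curves $E_{\Delta_K,f}$ have mod-$p^n$ image of index $>1$ in the normalizer with $-\operatorname{Id}$ absent). This is exactly the kind of check a blind reader should perform on a borrowed lemma.
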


Let $\Delta_Kf^2 = 3$ and let $p=3$. The following result describes an abelian extension in terms of $\alpha$ that is contained in $\Q(E[3^n])/\Q$ for an elliptic curve $E$ isomorphic to $E_{-3,1}$.

\begin{lemma}\label{alpha-for-p3-j0}
    Let $E/\Q$ be an elliptic curve with $j(E)=0$ and let $\alpha$ be as in Definition \ref{defn-alpha}.
    Then 
    \begin{itemize}
        \item If $[\mathcal{N}_{-1,1}(3^n) : G_{E^\alpha,3^n}] = 1$ or $3$, then $\Q(\zeta_{3^n},\sqrt{\alpha}) \subseteq \Q(E^\alpha[3^n])$ for $n\geq 1$.
        \item If $[\mathcal{N}_{-1,1}(3^n) : G_{E^\alpha,3^n}] = 2$ or $6$, then $\Q(\zeta_{3^n},\sqrt{\alpha}) = \Q(\zeta_{3^n}) \subseteq \Q(E^\alpha[3^n])$ for $n\geq 1$.
    \end{itemize}
\end{lemma}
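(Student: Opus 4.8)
The plan is to prove the two bullets by first establishing $\Q(\zeta_{3^n},\sqrt{\alpha})\subseteq \Q(E^\alpha[3^n])$ unconditionally, and then, for the second bullet, showing that the hypothesis on the index forces $\sqrt{\alpha}\in\Q(\zeta_{3^n})$.

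First I would fix a model. Since $j(E^\alpha)=0$ and, by Definition~\ref{defn-alpha}(2), $E^\alpha$ is the sextic twist of $E_{-3,1}$ determined by $\alpha$, it is $\Q$-isomorphic to $y^2=x^3+16\alpha$ (concretely, if $E^\alpha\colon y^2=x^3+B$ then $B/(16\alpha)\in(\Q^*)^6$ in both sub-cases of the definition), and I will work with this model. Its $3$-division polynomial factors as $3x(x^3+64\alpha)$, so $(0,\pm 4\sqrt{\alpha})\in E^\alpha[3]$ and therefore $\sqrt{\alpha}\in \Q(E^\alpha[3])\subseteq \Q(E^\alpha[3^n])$. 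Together with $\Q(\zeta_{3^n})\subseteq \Q(E^\alpha[3^n])$ from Lemma~\ref{rootsofunity_in_divfld} (the Weil pairing), this already gives $\Q(\zeta_{3^n},\sqrt{\alpha})\subseteq \Q(E^\alpha[3^n])$ with no hypothesis on the index, so the first bullet is complete and the inclusion in the second bullet holds. I would also record here, from the remaining roots $x^3=-64\alpha$ and the associated $y=\pm 4\sqrt{\alpha}\sqrt{-3}$, that $\Q(E^\alpha[3])=\Q(\zeta_3,\sqrt{\alpha},\sqrt[3]{\alpha})$; this is the input to the index comparison.

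For the extra assertion of the second bullet I must show that $[\mathcal{N}_{-1,1}(3^n):G_{E^\alpha,3^n}]\in\{2,6\}$ implies $\sqrt{\alpha}\in\Q(\zeta_{3^n})$. As $\Gal(\Q(\zeta_{3^n})/\Q)$ is cyclic, its unique quadratic subfield is $\Q(\sqrt{-3})$, so the claim is that the index is $2$ or $6$ only when $\alpha$ is a square or $-3$ times a square in $\Q^*/(\Q^*)^2$. Here I would invoke the classification of $3$-adic images of CM elliptic curves (Theorem~\ref{gal-repns-att-to-ec-w-cm}): up to conjugacy, $G_{E^\alpha,3^n}$ is one of an explicit finite list of subgroups of $\mathcal{N}_{-1,1}(3^n)$, and since passing from $E_{-3,1}$ to its sextic twist $E^\alpha$ twists the attached Hecke character by the (order dividing $6$) character associated to $\alpha$, the index $[\mathcal{N}_{-1,1}(3^n):G_{E^\alpha,3^n}]$ depends only on the class of $\alpha$ in $\Q^*/(\Q^*)^6$ and is independent of $n\ge 1$. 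It then suffices to compute, over the finitely many classes of $\alpha$, the value $[\mathcal{N}_{-1,1}(3):G_{E^\alpha,3}]=|\mathcal{N}_{-1,1}(3)|/[\Q(\zeta_3,\sqrt{\alpha},\sqrt[3]{\alpha}):\Q]$: this is $2$ or $6$ exactly when $\sqrt{\alpha}\in\Q(\zeta_3)=\Q(\sqrt{-3})$, i.e.\ when $\alpha$ is a square or $-3$ times a square, and in that case $\sqrt{\alpha}\in\Q(\sqrt{-3})\subseteq\Q(\zeta_{3^n})$, whence $\Q(\zeta_{3^n},\sqrt{\alpha})=\Q(\zeta_{3^n})$.

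The step I expect to be the main obstacle is this last comparison: knowing that the index is governed by the class of $\alpha$ modulo sixth powers — and in particular that it cannot increase from $3$ at level $3$ to $6$ at a higher level $3^n$ — genuinely uses the image classification, since the elementary degree bound coming from $\Q(\zeta_{3^n},\sqrt{\alpha},\sqrt[3]{\alpha})\subseteq\Q(E^\alpha[3^n])$ determines the index only when $n=1$. Everything else — the choice of model, the explicit $3$-torsion points, and the Weil pairing — is routine.
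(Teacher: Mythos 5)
Your handling of the unconditional inclusion is fine and matches the paper in substance: you recover $\Q(E^\alpha[3])=\Q(\zeta_3,\sqrt{\alpha},\sqrt[3]{\alpha})$ from the division polynomial of $y^2=x^3+16\alpha$ rather than citing it, but this is the same fact the paper pulls from Proposition~3.12 of \cite{gonzálezjiménez2024modelscmellipticcurves}, and together with Lemma~\ref{rootsofunity_in_divfld} it does give $\Q(\zeta_{3^n},\sqrt{\alpha})\subseteq\Q(E^\alpha[3^n])$ with no hypothesis on the index. That part is correct.

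The gap is in the second bullet, and it is exactly where you flagged the ``main obstacle.'' Your argument there rests on the claim that $[\mathcal{N}_{-1,1}(3^n):G_{E^\alpha,3^n}]$ is independent of $n\ge 1$, and that claim is false. Take $\alpha=-3$: then $\Q(E^\alpha[3])=\Q(\zeta_3,\sqrt[3]{3})$ has degree $6$ while $|\mathcal{N}_{-1,1}(3)|=12$, so the index at level $3$ is $2$; but $\Q(E^\alpha[9])=\Q(\zeta_9,\sqrt[3]{3})$ has degree $18$ while $|\mathcal{N}_{-1,1}(9)|=108$, so the index at level $9$ is $6$. Indeed the index is only eventually constant (for $n$ at least the level of definition), and in general it can jump. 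So you cannot reduce the computation to $n=1$ by invoking independence. What is true, and what you actually need, is that the index at level $3^n$ lies in $\{2,6\}$ if and only if it lies in $\{2,6\}$ at level $3$; equivalently, the parity of the index is preserved under the reduction map. You can get this either by a short group-theoretic observation (passing from level $3^n$ to $3^{n+1}$ multiplies the index by $|\ker\pi|^{-1}\cdot 9\in\{1,3,9\}$, and since the index divides $6$ this never changes its $2$-part), or, as the paper does, by citing the explicit level-by-level classification of the indices in Proposition~3.12 of \cite{gonzálezjiménez2024modelscmellipticcurves}, which lists precisely which $\alpha$ give each index at levels $3$ and $9$ (from which the pattern for higher $n$ follows via the level of definition). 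As written, though, your proof asserts a false stability statement and uses it as the load-bearing step, so it does not go through.
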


\begin{proof}
    Let $E = E_{-3,1}$ from Table \ref{ell_curves}, let $d \in \Q^*$ such that $16d$ is $6$-th-power-free, and let $E^d: y^2 = x^3 + 16d$. By Definition \ref{defn-alpha}, we have that $\alpha = d$. By Proposition 3.12 of \cite{gonzálezjiménez2024modelscmellipticcurves}, it follows that 
    \begin{itemize}
        \item $\Q(E^\alpha[3]) = \Q(\sqrt{\alpha}, \sqrt{-3}, \sqrt[3]{\alpha})$, and
        \item $\Q(E^\alpha[9]) = \Q(\sqrt{-3}, \sqrt[3]{3}, \sqrt{\alpha},  \sqrt[3]{\alpha}, \cos(2\pi/9))$.
    \end{itemize}
    Recall that $\Q(E^\alpha[3]) \subseteq \Q(E^\alpha[9]) \subseteq \Q(E^\alpha[3^n])$ for $n\geq 3$.
    Note that $\sqrt{\alpha} \in \Q(E^\alpha[3])$ and $\Q(E^\alpha[9])$. Therefore, $\Q(\sqrt{\alpha})\subseteq \Q(E^\alpha[3^n])$. 
    By Lemma \ref{rootsofunity_in_divfld}, it follows that $\Q(\zeta_{3^n}) \subseteq \Q(E^\alpha[3^n])$ for all $n\geq 1$. 
    \begin{enumerate}
        \item[(1)] By Proposition 3.12 of \cite{gonzálezjiménez2024modelscmellipticcurves}, we have that
        \begin{itemize}
            \item $[\mathcal{N}_{-1,1}(3) : G_{E^\alpha,3}] = 6$ if and only if $\alpha \in \{1, -27\}$; and
            \item $[\mathcal{N}_{-1,1}(9) : G_{E^\alpha,9}] = 6$ if and only if $\alpha \in \{1, -3, 9, -27, 81, -243\}$.
        \end{itemize}
        Note that in all cases $\sqrt{\alpha} \in \Q(\zeta_3)$, so it follows that $\Q(\zeta_{3}, \sqrt{\alpha}) = \Q(\zeta_{3})$. Therefore, $\Q(\zeta_{3^n},\sqrt{\alpha}) = \Q(\zeta_{3^n}) \subseteq \Q(E^\alpha[3^n])$ for all $n\geq 1$.

        \item[(2)] By Proposition 3.12 of \cite{gonzálezjiménez2024modelscmellipticcurves}, we have that
        \begin{itemize}
            \item $[\mathcal{N}_{-1,1}(3) : G_{E^\alpha,3}] = 3$ if and only if $\alpha = t^3$ for some $t\in \Q^*$, $t\neq 1,-3$; and
            \item $[\mathcal{N}_{-1,1}(9) : G_{E^\alpha,9}] = 3$ if and only if $\alpha = t^3$, or $9t^3$ for some $t\in \Q^*$, $t\neq 1,-3$ and $\alpha = 3t^3$ for some $t \in \Q^*$, $t\neq -1,3$.
        \end{itemize}
        Note that in all cases $\sqrt{\alpha} \not\in \Q(\zeta_3)$. Therefore, $\Q(\zeta_{3^n},\sqrt{\alpha}) \subseteq \Q(E^\alpha[3^n])$ for all $n\geq 1$.

        \item[(3)] By Proposition 3.12 of \cite{gonzálezjiménez2024modelscmellipticcurves}, we have that
        \begin{itemize}
            \item $[\mathcal{N}_{-1,1}(3) : G_{E^\alpha,3}] = 2$ if and only if $\alpha = t^2$, $-3t^2$ for some $t\in \Q^*$, $t\neq \pm 1, \pm 3$, and
            \item $[\mathcal{N}_{-1,1}(9) : G_{E^\alpha,9}] = 2$ if and only if $\alpha = t^2$ or $-3t^2$ for $t \in \Q^*$, $t\neq \pm 1, \pm 3, \pm 9$.
        \end{itemize}
        Note that in all cases $\sqrt{\alpha} \in \Q(\zeta_3)$, so it follows that $\Q(\zeta_3, \sqrt{\alpha}) = \Q(\zeta_3)$. Therefore, $\Q(\zeta_{3^n},\sqrt{\alpha}) = \Q(\zeta_{3^n}) \subseteq \Q(E^\alpha[3^n])$ for all $n\geq 1$.

        \item[(4)] By Proposition 3.12 of \cite{gonzálezjiménez2024modelscmellipticcurves}, we have that
        \begin{itemize}
            \item $[\mathcal{N}_{-1,1}(3) : G_{E^\alpha,3}] = 1$ otherwise; and
            \item $[\mathcal{N}_{-1,1}(9) : G_{E^\alpha,9}] = 1$ otherwise.
        \end{itemize}
        Therefore, $\Q(\zeta_{3^n},\sqrt{\alpha}) \subseteq \Q(E^\alpha[3^n])$ for all $n\geq 1$.
    \end{enumerate}
    Thus, we conclude that for $n\geq 1$, when $[\mathcal{N}_{-1,1}(3^n) : G_{E^\alpha,3^n}] = 1$ or $3$, it follows that $\Q(\zeta_{3^n}, \sqrt{\alpha}) \subseteq \Q(E^\alpha[3^n])$, and when $[\mathcal{N}_{-1,1}(3^n) : G_{E^\alpha,3^n}] = 2$ or $6$, it follows that $\Q(\zeta_{3^n}, \sqrt{\alpha}) = \Q(\zeta_{3^n}) \subseteq ~\Q(E^\alpha[3^n])$. 
\end{proof}

Let $\Delta_Kf^2 = -4$ and let $p=2$. The following result describes an abelian extension in terms of $\alpha$ that is contained in $\Q(E[2^n])/\Q$ for an elliptic curve $E$ isomorphic to $E_{-4,1}$. 
\begin{lemma}\label{alpha-for-p2-j1728}
    Let $E/\Q$ be an elliptic curve with $j(E)=1728$ and let $\alpha$ be as in Definition \ref{defn-alpha}.
    Then
    \begin{itemize}
        \item If $[\mathcal{N}_{-1,0}(2^n) : G_{E^\alpha,2^n}] = 1$ or $2$, then $\Q(\zeta_{2^{n+1}},\sqrt{\alpha}) \subseteq \Q(E^\alpha[2^n])$ for $n\geq 2$.
        \item If $[\mathcal{N}_{-1,0}(2^n) : G_{E^\alpha,2^n}] = 4$, then $\Q(\zeta_{2^{n+1}},\sqrt{\alpha}) = \Q(\zeta_{2^{n+1}}) \subseteq \Q(E^\alpha[2^n])$ for $n\geq 2$.
    \end{itemize}
\end{lemma}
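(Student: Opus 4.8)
The plan is to follow the blueprint of the proof of Lemma~\ref{alpha-for-p3-j0}, replacing the sextic-twist family $y^2 = x^3 + 16d$ and Proposition~3.12 of \cite{gonzálezjiménez2024modelscmellipticcurves} by the quartic-twist family $y^2 = x^3 + dx$ and the corresponding $j = 1728$ proposition of that paper. First I would normalize the curve: by Proposition~1.4(b) of \cite{silverman1} every $E/\Q$ with $j(E) = 1728$ is of the form $E^d : y^2 = x^3 + dx$ for an essentially unique $4$th-power-free $d \in \Z$, a quartic twist of $E_{-4,1}$; Definition~\ref{defn-alpha}(3) then records $\alpha = \alpha(E^d)$ explicitly in terms of $d$ (with $\alpha = t$ if $d = ct^2$ for square-free $t \neq \pm 1, \pm 2$ and $c \in \{\pm1,\pm2\}$, and $\alpha = d$ otherwise). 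Thus it suffices to prove both bullets with $E^\alpha = E^d$.

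Next I would quote from \cite{gonzálezjiménez2024modelscmellipticcurves} the explicit generators of $\Q(E^d[4])$ and $\Q(E^d[8])$ (the $j = 1728$ analogue of the formulas for $\Q(E^\alpha[3])$ and $\Q(E^\alpha[9])$ used in Lemma~\ref{alpha-for-p3-j0}). The two facts I need to extract are: $\sqrt{\alpha} \in \Q(E^d[4])$ (hence also $\sqrt{\alpha} \in \Q(E^d[8])$), and the tower $\Q(E^d[4]) \subseteq \Q(E^d[8]) \subseteq \Q(E^d[2^n])$ for $n \geq 3$. Together these give $\sqrt{\alpha} \in \Q(E^d[2^n])$ for every $n \geq 2$. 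Since $\Delta_K f^2 = -4$ is even, Lemma~\ref{moreroots_in_divfld} gives $\Q(\zeta_{2^{n+1}}) \subseteq \Q(E^d[2^n])$ for all $n \geq 2$. Combining the two inclusions yields $\Q(\zeta_{2^{n+1}}, \sqrt{\alpha}) \subseteq \Q(E^d[2^n])$ for all $n \geq 2$, which already proves the inclusion asserted in both bullets; it remains only to settle, in the index-$4$ case, whether this inclusion collapses to $\Q(\zeta_{2^{n+1}})$.

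For that last point I would use the index computation in \cite{gonzálezjiménez2024modelscmellipticcurves} to list, at levels $n = 2$ and $n = 3$ (and hence for all $n \geq 2$, since the index of $G_{E^d,2^n}$ in $\mathcal{N}_{-1,0}(2^n)$ is constant for $n$ large), exactly the $\alpha$ for which $[\mathcal{N}_{-1,0}(2^n):G_{E^d,2^n}] = 4$. One then checks, case by case, that each such $\alpha$ has square-free part among $\{1, -1, 2, -2\}$, so that $\sqrt{\alpha} \in \Q(\zeta_8) \subseteq \Q(\zeta_{2^{n+1}})$, using $i = \zeta_4$, $\sqrt{2} = \zeta_8 + \zeta_8^{-1}$ and $\sqrt{-2} = \zeta_8 - \zeta_8^{-1}$; consequently $\Q(\zeta_{2^{n+1}}, \sqrt{\alpha}) = \Q(\zeta_{2^{n+1}})$. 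Dually, when the index is $1$ or $2$ the same list shows $\sqrt{\alpha} \notin \Q(\zeta_{2^{n+1}})$, so the inclusion of the first bullet is strict and nothing more can be said, matching the dichotomy recorded in Theorem~\ref{casep2}(b)(ii). The restriction to $n \geq 2$ is exactly what makes Lemma~\ref{moreroots_in_divfld} applicable; the $n = 1$ statements form the separate bullet list in Theorem~\ref{casep2}(b)(ii) and are handled by hand.

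The main obstacle is the bookkeeping in that final paragraph: one must verify that the set of twist parameters producing index $4$ coincides precisely with those whose square root lies in $\Q(\zeta_8)$, and be careful about the gap between ``unique up to squares'' (which governs $\Q(\sqrt{\alpha})$) and ``unique up to $4$th powers'' (which governs $d$ and the image $G_{E^d,2^n}$) in Definition~\ref{defn-alpha}(3): two models with the same $\alpha$ modulo squares can nonetheless have different $2$-adic images and hence different indices (e.g.\ $y^2 = x^3 + x$ versus $y^2 = x^3 + 9x$), so it is the index, and not merely $\sqrt{\alpha}$, that must be tracked throughout.
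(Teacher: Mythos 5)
Your plan matches the paper's proof in overall structure: normalize to the quartic twist family $E^d: y^2 = x^3 + dx$, quote the explicit presentations of $\Q(E^d[4])$ and $\Q(E^d[8])$ from \cite{gonzálezjiménez2024modelscmellipticcurves} (Corollary~3.1 and Lemma~3.3 there), combine $\sqrt{\alpha} \in \Q(E^d[2^n])$ with $\Q(\zeta_{2^{n+1}}) \subseteq \Q(E^d[2^n])$ from Lemma~\ref{moreroots_in_divfld}, and then run a case split on the index $[\mathcal{N}_{-1,0}(2^n):G_{E^d,2^n}]$ to decide whether $\sqrt{\alpha}$ already lies in $\Q(\zeta_{2^{n+1}})$. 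Your last paragraph also correctly flags the tension between ``unique up to squares'' (which governs $\sqrt{\alpha}$) and ``unique up to $4$th powers'' (which governs $d$ and the image), and this is indeed the central subtlety.

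Where the proposal has a genuine gap is that it treats ``$\sqrt{\alpha} \in \Q(E^d[4])$'' as a fact that can simply be \emph{read off} from $\Q(E^d[4]) = \Q(\zeta_8, \sqrt[4]{d})$, on a par with the tower $\Q(E^d[4]) \subseteq \Q(E^d[8]) \subseteq \Q(E^d[2^n])$. That is not the case, and this is exactly where the paper does its real work. The cited result gives $\sqrt[4]{d}$ (hence $\sqrt{d} = (\sqrt[4]{d})^2$) in the division field, but Definition~\ref{defn-alpha}(3) sets $\alpha = t$ rather than $\alpha = d$ whenever $d = ct^2$ with $t$ square-free, $t \neq \pm1,\pm2$, and $c \in \{\pm1,\pm2\}$ --- precisely the index-$2$ regime. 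In that regime $\sqrt{d} = \sqrt{c}\,|t|$ already lies in $\Q(\zeta_8)$ and tells you nothing about $\sqrt{t}$; to get $\sqrt{t}$ you must factor $\sqrt[4]{d} = \sqrt[4]{c}\,\sqrt{t}$ and argue that $\sqrt[4]{c}$ itself sits inside the division field, so that $\sqrt{t} = \sqrt[4]{d}/\sqrt[4]{c}$ does too. The paper's proof therefore splits the index-$2$ case into $d = \pm t^2$ versus $d = \pm 2t^2$ and into $n = 2$ versus $n \geq 3$, invoking the explicit generators of $\Q(E^d[8])$ (in particular $\sqrt{-1+\sqrt{2}}$) and the proof of \cite[Lemma~3.3]{gonzálezjiménez2024modelscmellipticcurves} to place $\sqrt[4]{\pm 2}$ in $\Q(E^d[8])$. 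Your proposal as written skips this layer entirely, so the inclusion you claim ``already proves the inclusion asserted in both bullets'' is not yet established for the index-$2$ twists with $\alpha \neq d$ --- the case where the lemma actually has content.

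A second, smaller point: in the index-$4$ case the paper does not merely observe that the square-free part of $\alpha$ lies in $\{1,-1,2,-2\}$; it tracks which values of $d$ (not $\alpha$) realize index $4$, namely $d \in \{\pm1,\pm2,\pm4,\pm8\}$, and then evaluates $\alpha$ on each via Definition~\ref{defn-alpha}(3). Your phrasing ``each such $\alpha$ has square-free part among $\{1,-1,2,-2\}$'' is the right conclusion but would need to be derived from the list of $d$, consistent with your own warning that the index is a function of $d$ modulo $4$th powers, not of $\alpha$ modulo squares.
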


\begin{proof}
    Let $E = E_{-4,1}$ from Table \ref{ell_curves}, let $d$ be a non-zero, $4$-th-power-free, and let $E^d: y^2 = x^3 + d x$. 
    Let $\alpha$ be as in Definition \ref{defn-alpha}. By Corollary 3.1 of \cite{gonzálezjiménez2024modelscmellipticcurves}, it follows that 
    \begin{itemize}
        \item $\Q(E^d[4]) = \Q(\zeta_{8}, \sqrt[4]{d})$, and
        \item $\Q(E^d[8]) = \Q(\zeta_{16}, \sqrt{-1 + \sqrt{2}}, \sqrt[4]{d})$.
    \end{itemize}
    Recall that $\Q(E^d[2]) \subseteq \Q(E^d[4]) \subseteq \Q(E^d[2^n])$ for $n\geq 3$. Observe that $\sqrt[4]{d} \in \Q(E^d[2^n])$ for $n\geq 2$. 
    Since $2$ divides $\Delta_Kf^2 = -4$, by Lemma \ref{moreroots_in_divfld}, it follows that $\Q(\zeta_{2^{n+1}}) \subseteq \Q(E^d[2^n])$ for all $n\geq 2$. 

    \begin{enumerate}
        \item[(1)] By Lemma 3.3 of \cite{gonzálezjiménez2024modelscmellipticcurves}, we have that $[\mathcal{N}_{-1,0}(8) : G_{E^\alpha,8}] = 4$ if and only if $d \in \{\pm 1, \pm 2, \pm 4, \pm 8\}$.
        By Definition \ref{defn-alpha}, if $d \in \{\pm1, \pm 2\}$, then $\alpha = d$, and if $d \in \{\pm 4, \pm 8\}$, then $\alpha = \sqrt{2}$. 
        Note that in all cases $\sqrt{\alpha} \in \Q(\zeta_8)$, so it follows that $\Q(\zeta_{8}, \sqrt{\alpha}) = \Q(\zeta_{8})$. Therefore, $\Q(\zeta_{2^{n+1}},\sqrt{\alpha}) = \Q(\zeta_{2^{n+1}}) \subseteq \Q(E^\alpha[2^n])$ for all $n\geq 1$.

        \item[(2)] By Lemma 3.3 of \cite{gonzálezjiménez2024modelscmellipticcurves}, we have that $[\mathcal{N}_{-1,0}(8) : G_{E^\alpha,8}] = 2$ if and only if $d = \pm t^2$ or $\pm 2 t^2$ for some other square-free integer $t\neq \pm 1,\pm 2$. 
        By Definition \ref{defn-alpha}, we have that $\alpha = t$. 
        Recall that from Corollary 3.1 of \cite{gonzálezjiménez2024modelscmellipticcurves} we have $\sqrt[4]{d} \in \Q(E^d[2^n])$ for $n\geq 2$. We will show that $\sqrt{\alpha} \in \Q(E^d[2^n])$, but $\sqrt{\alpha}\not\in \Q(\zeta_{2^{n+1}})$ for $n \geq 2$.
        
        \begin{itemize}
            \item First we will handle the case where $n=2$.
            
            If $d = \pm t^2$, then $\sqrt[4]{d} = \sqrt[4]{\pm t^2} = \sqrt[4]{\pm 1}\sqrt{t}$, where $\sqrt[4]{\pm 1} \in \Q(\zeta_8)$ and $\sqrt{t} \not\in \Q(\zeta_8)$. Therefore, $\Q(E^d[4]) = \Q(\zeta_8, \sqrt{t})$ and is abelian over $\Q$. 
            
            If $d = \pm 2t^2$, then $\sqrt[4]{d} = \sqrt[4]{\pm 2 t^2} = \sqrt[4]{\pm 2}\sqrt{t}$, where $\sqrt[4]{\pm 2}, \sqrt{t}\not\in \Q(\zeta_{8})$. Note $\Q(E^d[4])/\Q$ is not abelian, but $\sqrt{t} \in \Q(E^d[4])$ and $\Q(\zeta_8, \sqrt{t}) \subseteq \Q(E^d[4])$ is abelian over $\Q$. 
            
            Note that $\alpha = t$ and in all cases $\sqrt{\alpha} \not\in \Q(\zeta_8)$; however, $\sqrt{\alpha} \in \Q(E^\alpha[4])$. 

            \item Let $n\geq 3$ and let $d = \pm 2^j t^2$, where $j\in \{0,1\}$. 

            Observe that $\sqrt[4]{d} = \sqrt[4]{\pm 2^j t^2} = \sqrt[4]{\pm 2^j} \sqrt{t}$, where $\sqrt{t} \not\in \Q(\zeta_{16})$. From the proof of Lemma 3.3 in \cite{gonzálezjiménez2024modelscmellipticcurves}, it follows that $\sqrt[4]{\pm 2^j} \in \Q(\zeta_{16}, \sqrt{-1 + \sqrt{2}}) \subseteq \Q(E^d[8])$. Therefore, $\Q(E^d[8]) = \Q(\zeta_{16},\sqrt{-1 + \sqrt{2}}, \sqrt[4]{\pm 2^j}\sqrt{t}) = \Q(\zeta_{16},\sqrt{-1 + \sqrt{2}}, \sqrt{t})$, and hence, $\sqrt{t} \in \Q(E^d[8])$.

            Note that $\alpha = t$ and in all cases $\sqrt{\alpha} \not\in \Q(\zeta_{16})$; however, $\sqrt{\alpha} \in \Q(E^\alpha[8])$. 
        \end{itemize}
        
        Therefore, we conclude that $\Q(\zeta_{2^{n+1}},\sqrt{\alpha}) \subseteq \Q(E^\alpha[2^n])$ for all $n\geq 2$.

        \item[(3)] By Lemma 3.3 of \cite{gonzálezjiménez2024modelscmellipticcurves}, we have that $[\mathcal{N}_{-1,0}(8) : G_{E^\alpha,8}] = 1$ otherwise. By Definition \ref{defn-alpha}, we have that $\alpha = d$, and since $\sqrt[4]{\alpha} \in \Q(E^\alpha[2^n])$ for $n\geq 2$, we have that $\sqrt{\alpha} \in \Q(E^\alpha[2^n])$. Therefore, $\Q(\zeta_{2^{n+1}}, \sqrt{\alpha}) \subseteq \Q(E^\alpha[2^n])$ for all $n\geq 1$.
    \end{enumerate}
    Thus, we conclude that for $n\geq 2$, when $[\mathcal{N}_{-1,0}(2^n) : G_{E^\alpha,2^n}] = 1$ or $2$, it follows that $\Q(\zeta_{2^{n+1}}, \sqrt{\alpha}) \subseteq \Q(E^\alpha[2^n])$, and when $[\mathcal{N}_{-1,0}(2^n) : G_{E^\alpha,2^n}] = 4$, it follows that $\Q(\zeta_{2^{n+1}}, \sqrt{\alpha}) = \Q(\zeta_{2^{n+1}}) \subseteq \Q(E^\alpha[2^n])$. 
\end{proof}

Let $\Delta_Kf^2 = -8$ or $-16$ and let $p=2$. The following result describes an abelian extension in terms of $\alpha$ that is contained in $\Q(E[2^n])/\Q$ for an elliptic curve $E$ isomorphic to $E_{-8,1}$ or $E_{-4,2}$. 
\begin{lemma}\label{alpha-for-p2-8-16}
    Let $E = E_{-8,1}$ or $E_{-4,2}$ from Table \ref{ell_curves}, and let $\alpha$ be as in Definition \ref{defn-alpha}. Then
    \begin{itemize}
        \item If $[\mathcal{N}_{\delta,0}(2^n) : G_{E^\alpha,2^n}] = 1$, then $\Q(\zeta_{2^{n+1}},\sqrt{\alpha}) \subseteq \Q(E^\alpha[2^n])$ for $n\geq 2$.
        \item If $[\mathcal{N}_{\delta,0}(2^n) : G_{E^\alpha,2^n}] = 2$, then $\Q(\zeta_{2^{n+1}},\sqrt{\alpha}) = \Q(\zeta_{2^{n+1}}) \subseteq \Q(E^\alpha[2^n])$ for $n\geq 2$.
    \end{itemize}
\end{lemma}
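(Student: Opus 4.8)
The plan is to mimic the structure of the proof of Lemma~\ref{alpha-for-p2-j1728}: namely, reduce to the simplest model in Table~\ref{ell_curves}, use an explicit description of the $2^n$-division field of the twisted curve (for the $2$- and $4$-torsion, and hence all higher torsion by containment), and then match each value of the index $[\mathcal{N}_{\delta,0}(2^n):G_{E^\alpha,2^n}]$ against the possible twisting factors $d$, keeping track of whether $\sqrt{\alpha}$ lands inside $\Q(\zeta_{2^{n+1}})$.

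First I would fix $E = E_{-8,1}: y^2 = x^3 - 4320x + 96768$ (so $\delta = -2$) or $E = E_{-4,2}: y^2 = x^3 - 11x + 14$ (so $\delta = -4$); both are quadratic twists of one another up to isomorphism over $\overline{\Q}$ would be false, but they have the same $\mathcal{N}_{\delta,0}(2^n)$-type structure after the normalization $\delta \mapsto \delta/(\text{square})$, so I would treat them together using Lemma~\ref{sqrtd_in_divfld}, which says that for a quadratic twist $E^d$ we have $\Q(E^d[2^n]) \subseteq \Q(\sqrt{d}, E[2^n])$, with equality (and $G_{E^d,2^n}$ conjugate to $\langle -\mathrm{Id}, G_{E,2^n}\rangle$) precisely when $\sqrt{d}\notin\Q(E[2^n])$. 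The next step is to invoke the explicit computation of $\Q(E[4])$ and $\Q(E[8])$ for these two base curves from \cite{gonzálezjiménez2024modelscmellipticcurves} (analogous to Corollary~3.1 used in the previous lemma), together with Lemma~\ref{moreroots_in_divfld}, which gives $\Q(\zeta_{2^{n+1}})\subseteq\Q(E^d[2^n])$ for $n\geq 2$ since $\Delta_Kf^2$ is even. From these one reads off that $\sqrt{d}$ (hence $\sqrt{\alpha}$, since $\alpha = d$ here by Definition~\ref{defn-alpha}(1), the $j$-invariant being neither $0$ nor $1728$) lies in $\Q(E^d[2^n])$ for all $n\geq 2$.

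Then I would split into the two index cases. When the index is $1$, the containment $\sqrt{\alpha}\in\Q(E^\alpha[2^n])$ combined with $\Q(\zeta_{2^{n+1}})\subseteq\Q(E^\alpha[2^n])$ gives $\Q(\zeta_{2^{n+1}},\sqrt{\alpha})\subseteq\Q(E^\alpha[2^n])$ immediately. When the index is $2$, I would use the classification from \cite{gonzálezjiménez2024modelscmellipticcurves} (the analogue of Lemma~3.3) to pin down exactly which $d$ give index $2$ at level $8$ — these should be the $d$ that are, up to the ambiguity in $\delta$, a square or a small unit times a square — and check in each such case that $\sqrt{\alpha}\in\Q(\zeta_{2^{n+1}})$, so that $\Q(\zeta_{2^{n+1}},\sqrt\alpha)=\Q(\zeta_{2^{n+1}})$; this is consistent with Lemma~\ref{sqrtd_in_divfld}(2), since index $2$ corresponds to $G_{E^\alpha,2^n}$ \emph{not} picking up the extra $-\mathrm{Id}$, which forces $\sqrt\alpha$ into the cyclotomic field already present.

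The main obstacle I expect is the bookkeeping for the index-$2$ case: one must be careful about the two different values $\delta = -2$ (for $E_{-8,1}$) and $\delta = -4$ (for $E_{-4,2}$), which change the shape of the Cartan subgroup and hence which twisting factors $d$ lower the index, and one must correctly normalize $d$ to the square-free representative $\alpha$ of Definition~\ref{defn-alpha}(1) before deciding membership in $\Q(\zeta_{2^{n+1}})$. A secondary subtlety is that, unlike the $j=1728$ case, there is no $\sqrt[4]{d}$ available — only $\sqrt{d}$ — so the argument that $\sqrt\alpha\in\Q(E^\alpha[2^n])$ must come directly from Lemma~\ref{sqrtd_in_divfld} and the explicit division-field descriptions rather than from a fourth-root computation; I would double-check the small levels $n=2$ separately against the tables in \cite{gonzálezjiménez2024modelscmellipticcurves}, exactly as the previous lemma does.
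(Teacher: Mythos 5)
Your plan is essentially the paper's proof: pull the explicit descriptions of $\Q(E^\alpha[2])$, $\Q(E^\alpha[4])$, and $\Q(E^\alpha[8])$ from \cite{gonzálezjiménez2024modelscmellipticcurves} (the relevant citations are Propositions 3.6 and 3.9 there, the $\Delta_Kf^2 = -8$ and $-16$ analogues of the Corollary 3.1 you reference), read off $\sqrt{\alpha} \in \Q(E^\alpha[2^n])$ for $n \geq 2$, use Lemma \ref{moreroots_in_divfld} to get $\Q(\zeta_{2^{n+1}}) \subseteq \Q(E^\alpha[2^n])$, and split on the index.

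One caution on your aside in the index-$2$ case. You claim that index $2$ ``forces $\sqrt{\alpha}$ into the cyclotomic field already present'' via the contrapositive of Lemma \ref{sqrtd_in_divfld}(2). That contrapositive only gives $\sqrt{\alpha} \in \Q(E[2^n])$, the division field of the \emph{base} curve, which is strictly larger than $\Q(\zeta_{2^{n+1}})$ and may well contain abelian subextensions not lying in the cyclotomic subfield. So Lemma \ref{sqrtd_in_divfld} by itself does not place $\sqrt{\alpha}$ inside $\Q(\zeta_{2^{n+1}})$. The conclusion $\Q(\zeta_{2^{n+1}}, \sqrt{\alpha}) = \Q(\zeta_{2^{n+1}})$ requires the precise characterization of the index-$2$ twists from \cite{gonzálezjiménez2024modelscmellipticcurves}: for both $E_{-8,1}$ and $E_{-4,2}$, $[\mathcal{N}_{\delta,0}(2^n):G_{E^\alpha,2^n}] = 2$ exactly when $\alpha \in \{\pm 1, \pm 2\}$, and for each such $\alpha$ one has $\sqrt{\alpha} \in \Q(\zeta_8) \subseteq \Q(\zeta_{2^{n+1}})$. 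Your looser ``square or small unit times a square'' should be pinned down to this four-element list before concluding; once that is done the argument is complete.
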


\begin{proof}
    Let $E = E_{-8,1}$ or $E_{-4,2}$ from Table \ref{ell_curves}, and let $\alpha$ be as in Definition \ref{defn-alpha}. If $E = E_{-8,1}$, then by Proposition 3.6 of \cite{gonzálezjiménez2024modelscmellipticcurves} it follows that 
    \begin{itemize}
        \item $\Q(E^\alpha[2]) = \Q(\sqrt{2})$, 
        \item $\Q(E^\alpha[4]) = \Q(\beta, \sqrt{\alpha}\sqrt[4]{2})$ such that $f_4(\beta)=0$, where $f_4(x) = x^8 + 6x^4 + 1$, and 
        \item $\Q(E^\alpha[8]) = \Q(\gamma,\sqrt{\alpha})$ such that $f_8(\gamma)=0$, where $f_8(x)=x^{32} + 16x^{31} + 128x^{30} + 672x^{29} +  2544x^{28} + 7200x^{27} + 15352x^{26} + 24272x^{25} + 26904x^{24} + 17312x^{23} - 304x^{22} - 11984x^{21} - 9672x^{20} - 2720x^{19} - 3592x^{18} - 7552x^{17} -     2798x^{16} + 6224x^{15} + 6368x^{14} - 672x^{13} - 2224x^{12} + 3360x^{11} +     4952x^{10} - 1072x^9 - 4600x^8 - 1120x^7 + 1776x^6 + 752x^5 - 264x^4 -  96x^3 + 24x^2 + 1$.
    \end{itemize}
    If $E = E_{-4,2}$, then by Proposition 3.9 of \cite{gonzálezjiménez2024modelscmellipticcurves} it follows that 
    \begin{itemize}
        \item $\Q(E^\alpha[2]) = \Q(\sqrt{2})$, 
        \item $\Q(E^\alpha[4]) = \Q(\beta, \sqrt{\alpha})$ such that $f_4(\beta)=0$, where $f_4(x) = x^8 - 4x^6 + 8x^4 - 4x^2 + 1$, and 
        \item $\Q(E^\alpha[8]) = \Q(\gamma,\sqrt{\alpha})$ such that $f_8(\gamma)=0$, where $f_8(x)=x^{32} + 16x^{31} + 120x^{30} + 560x^{29} + 1848x^{28} + 4784x^{27} + 11000x^{26} + 25344x^{25} + 59844x^{24} + 133856x^{23} +  260768x^{22} + 419392x^{21} + 534920x^{20} + 513536x^{19} + 332032x^{18} + 93856x^{17} - 43548x^{16} - 22112x^{15} + 61056x^{14} + 77728x^{13} + 20768x^{12} - 18304x^{11} + 320x^{10} + 21440x^{9} + 8240x^{8} - 8256x^{7} - 1888x^{6} + 3584x^{5} + 800x^{4} - 1216x^{3} + 320x^{2} + 8$.
    \end{itemize}
    Recall that $\Q(E^\alpha[2]) \subseteq \Q(E^\alpha[4]) \subseteq \cdots \subseteq \Q(E^\alpha[2^n])$ for $n\geq 3$.
    Note that in both cases, $\Q(E^\alpha[2]) = \Q(\sqrt{2})$ and that $\sqrt{\alpha} \in \Q(E^\alpha[4])$ and $\Q(E^\alpha[8])$. Therefore, $\Q(\sqrt{\alpha}) \subseteq \Q(E^\alpha[2^n])$ for all $n\geq 2$.
    Since $2$ divides $\Delta_Kf^2$, by Lemma \ref{moreroots_in_divfld}, it follows that $\Q(\zeta_{2^{n+1}}) \subseteq \Q(E^\alpha[2^n])$ for all $n\geq 1$. 
    
    \begin{enumerate}
        \item[(1)] By Propositions 3.6 and 3.9 of \cite{gonzálezjiménez2024modelscmellipticcurves}, we have that $[\mathcal{N}_{\delta,0}(2^n) : G_{E^\alpha,2^n}] = 2$ if and only if $\alpha \in \{\pm 1, \pm 2\}$. Note that in all cases $\sqrt{\alpha} \in \Q(\zeta_8)$, so it follows that $\Q(\zeta_8, \sqrt{\alpha}) = \Q(\zeta_8)$. Therefore, $\Q(\zeta_{2^{n+1}},\sqrt{\alpha}) = \Q(\zeta_{2^{n+1}}) \subseteq \Q(E^\alpha[2^n])$ for all $n\geq 1$.

        \item[(2)] By Propositions 3.6 and 3.9 of \cite{gonzálezjiménez2024modelscmellipticcurves}, we have that $[\mathcal{N}_{\delta,0}(2^n) : G_{E^\alpha,2^n}] = 1$ otherwise. Therefore, $\Q(\zeta_{2^{n+1}},\sqrt{\alpha}) \subseteq \Q(E^\alpha[2^n])$ for all $n\geq 1$.
    \end{enumerate}
    Thus, we conclude that for $n\geq 2$, when $[\mathcal{N}_{\delta,0}(2^n) : G_{E^\alpha,2^n}] = 1$, it follows that $\Q(\zeta_{2^{n+1}}, \sqrt{\alpha}) \subseteq \Q(E^\alpha[2^n])$, and when $[\mathcal{N}_{-1,0}(2^n) : G_{E^\alpha,2^n}] = 2$, it follows that $\Q(\zeta_{2^{n+1}}, \sqrt{\alpha}) = \Q(\zeta_{2^{n+1}}) \subseteq \Q(E^\alpha[2^n])$. 
\end{proof}


\subsection{Classification of $\ell$-adic images of Galois representations}\label{classification-of-images}

First, we recall results from \cite{lozano-galoiscm} that provide details about the image of Galois representations attached to elliptic curves with CM. 

\begin{theorem}[\cite{lozano-galoiscm}, Theorems 1.1 and 1.2]\label{gal-repns-att-to-ec-w-cm}
    Let $E/\Q(j_{K,f})$ be an elliptic curve with CM by $\mathcal{O}_{K,f}$, let $N\geq 3$, and let $\rho_{E,N}$ be the Galois representation $\Gal(\overline{\Q(j_{K,f}})/\Q(j_{K,f})) \to \Aut(E[N]) \cong \GL(2,\Z/N\Z)$, and let $\rho_{E}$ be the Galois representation $\Gal(\overline{\Q(j_{K,f}})/\Q(j_{K,f})) \to \varprojlim \Aut(E[N]) \cong \GL(2,\widehat{\Z})$. Then, 
    \begin{itemize}
        \item[(1)] There is a $\Z/N\Z$-basis of $E[N]$ such that the image of $\rho_{E,N}$ is contained in $\mathcal{N}_{\delta,\phi}(N)$, and the index of the image of $\rho_{E,N}$ in $\mathcal{N}_{\delta,\phi}(N)$ is a divisor of the order of $\mathcal{O}_{K,f}^\times/\mathcal{O}_{K,f,N}^\times$, where $\mathcal{O}_{K,f,N}^\times = \{u \in \mathcal{O}_{K,f}^\times : u \equiv 1 \bmod N\mathcal{O}_{K,f}\}$.  
        \item[(2)] There is a compatible system of bases of $E[N]$ such that the image of $\rho_E$ is contained in $\mathcal{N}_{\delta,\phi}$, and the index of the image of $\rho_E$ in $\mathcal{N}_{\delta,\phi}$ is a divisor of the order $\mathcal{O}_{K,f}^\times$. In particular, the index is a divisor of $4$ or $6$. 
    \end{itemize}
\end{theorem}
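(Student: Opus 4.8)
The plan is to establish the adelic statement (2) and deduce the mod-$N$ statement (1) by reduction; the explicit matrix shape of the Cartan and its normalizer will fall out of the basis chosen along the way. Write $L=K(j_{K,f})$, and recall the classical facts that $L$ is the ring class field of $\mathcal{O}_{K,f}$, that $[L:\Q(j_{K,f})]=2$, and that every endomorphism of $E$ is defined over $L$. Because $E$ has CM by exactly the order $\mathcal{O}_{K,f}$, the Tate module $T_p(E)$ is a free rank-one $\mathcal{O}_{K,f}\otimes\Z_p$-module (this is where that precise hypothesis is used); fixing such an isomorphism identifies $\Aut(T_p(E))$ with $\GL(2,\Z_p)$ so that the centralizer of the CM-action becomes $(\mathcal{O}_{K,f}\otimes\Z_p)^\times$. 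Since $\Gal(\overline{L}/L)$ commutes with the ($L$-rational) endomorphisms, $\rho_{E,p^\infty}(\Gal(\overline{L}/L))$ is contained in this centralizer, and assembling over all $p$ gives a subgroup of $\widehat{\mathcal{O}_{K,f}}^\times$. The nontrivial element $\sigma$ of $\Gal(L/\Q(j_{K,f}))$ acts on $\mathcal{O}_{K,f}$ by complex conjugation, so any lift of $\sigma$ to $\Gal(\overline{\Q(j_{K,f})}/\Q(j_{K,f}))$ maps to an automorphism conjugating this centralizer by that involution; hence the full image of $\rho_E$ lies in the group generated by the centralizer together with one such lift.

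To identify this group as $\mathcal{N}_{\delta,\phi}$ I would make the basis explicit. Write $\mathcal{O}_{K,f}=\Z[\omega]$ with $\omega^2=\phi\omega+\delta$; the two cases in the definition of $(\delta,\phi)$ correspond exactly to $\Delta_Kf^2\equiv 0$ and $\Delta_Kf^2\equiv 1 \pmod 4$, with the parity of $N$ governing whether $2$ is invertible. Taking the ordered $\Z/N\Z$-basis $\{\omega,1\}$ of $\mathcal{O}_{K,f}/N\mathcal{O}_{K,f}$ and transporting it, compatibly in $N$, to a basis of $E[N]$ via the rank-one module structure, one checks by a short computation that multiplication by $a+b\omega$ is the matrix $c_{\delta,\phi}(a,b)$ and that complex conjugation $\omega\mapsto\phi-\omega$ is the matrix $c_1$. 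This identifies $\mathcal{C}_{\delta,\phi}(N)$ with $(\mathcal{O}_{K,f}/N\mathcal{O}_{K,f})^\times$, yields $\operatorname{im}\rho_{E,N}\subseteq\mathcal{N}_{\delta,\phi}(N)$ and $\operatorname{im}\rho_E\subseteq\mathcal{N}_{\delta,\phi}$, and reduces both index claims to bounding the image inside the Cartan.

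For the index — the step I expect to be the crux — note that by the above the index of $\operatorname{im}\rho_E$ in $\mathcal{N}_{\delta,\phi}$ equals the index of $\rho_{E,p^\infty}(\Gal(\overline{L}/L))$ inside $\widehat{\mathcal{O}_{K,f}}^\times$, and similarly at finite level. I would compute the latter with the main theorem of complex multiplication: the action of $\Gal(\overline{L}/L)$ on torsion is computed, via the Artin reciprocity map of $L$ and the norm to $K$, by multiplication by ideles, so that $L(E[N])$ differs from the corresponding ray class field of $\mathcal{O}_{K,f}$ only by the Weber-function twist; since $L$ already contains the ring class field, the image inside $(\mathcal{O}_{K,f}/N\mathcal{O}_{K,f})^\times$ omits only the image of the global units $\mathcal{O}_{K,f}^\times$, hence has index dividing $|\mathcal{O}_{K,f}^\times/\mathcal{O}_{K,f,N}^\times|$. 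Passing to the inverse limit, where $\bigcap_N\mathcal{O}_{K,f,N}^\times=\{1\}$, gives index dividing $|\mathcal{O}_{K,f}^\times|\in\{2,4,6\}$. The delicate points here are: tracking the unit ambiguity precisely through the Weber function (which is also why one obtains only divisibility, the exact index depending on the chosen model or twist of $E$), confirming that no part of the class group of $\mathcal{O}_{K,f}$ survives because $L$ is the full ring class field, and handling the exceptional cases $N=2$ and $j(E)\in\{0,1728\}$, where $\mathcal{O}_{K,f}^\times$ is larger and the even/odd normalization of $(\delta,\phi)$ must be accounted for.
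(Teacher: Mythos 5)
This theorem is quoted verbatim from \cite{lozano-galoiscm} (Theorems 1.1 and 1.2), so the present paper contains no proof to compare against; it is an imported result. Your sketch is a correct reconstruction of the argument in that reference and follows the same route: $T_p(E)$ free of rank one over $\mathcal{O}_{K,f}\otimes\Z_p$ forces $\rho_E(\Gal(\overline{L}/L))$ into the centralizer $\widehat{\mathcal{O}_{K,f}}^\times$, a lift of complex conjugation supplies the outer normalizer element, the basis $\{\omega,1\}$ produces the explicit $c_{\delta,\phi}(a,b)$ and $c_1$ shapes, and the main theorem of CM over the ring class field $L$ bounds the index by the unit-group quotient. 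One small imprecision: ``the image inside $(\mathcal{O}_{K,f}/N\mathcal{O}_{K,f})^\times$ omits only the image of the global units'' overstates matters --- CM theory gives that $L_N\subseteq L(E[N])$ and that $\Gal(L(E[N])/L_N)$ injects into $\mathcal{O}_{K,f}^\times/\mathcal{O}_{K,f,N}^\times$, so one only gets the index-divisibility you correctly conclude, not an identification of the missing part; and for non-maximal orders the idelic description of $\Gal(L_N/L)$ requires the order-version of Shimura reciprocity (Stevenhagen's formulation), which your sketch implicitly invokes. None of this is a gap in your outline at the level of a citation, but it is where the cited proof has to do real work.
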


One of our goals is to understand what the commutator subgroups of $\mathcal{N}_{\delta,\phi}(N)$ or subgroups of $\mathcal{N}_{\delta,\phi}(N)$ are for images of Galois representations. We can start by recalling information about a subgroup of $\mathcal{N}_{\delta,\phi}(N)$ that we know is abelian, the Cartan subgroup. 

\begin{lemma}
    Let $N\geq 2$. The group $\mathcal{C}_{\delta,\phi}(N)$ is isomorphic to $(\mathcal{O}_{K,f}/N\mathcal{O}_{K,f})^\times$ so, in particular, it is an abelian subgroup of $\GL(2,\Z/N\Z)$.
\end{lemma}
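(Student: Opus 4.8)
The plan is to exhibit an explicit ring isomorphism between $\mathcal{O}_{K,f}/N\mathcal{O}_{K,f}$ and a matrix algebra, and then check that it restricts to a group isomorphism on the units. Recall that $\mathcal{O}_{K,f} = \Z[\tau]$ where $\tau$ is a root of $X^2 - \phi X - \delta$ (with $\phi,\delta$ as in the set-up: when $\Delta_Kf^2 \equiv 0 \bmod 4$ or $N$ is odd one takes $\tau = f\sqrt{\Delta_K}/2$, so $\phi = 0$ and $\delta = \Delta_Kf^2/4$; when $\Delta_Kf^2\equiv 1\bmod 4$ and $N$ is even one takes $\tau = f(1+\sqrt{\Delta_K})/2$, so $\tau$ satisfies $X^2 - fX - \frac{\Delta_K-1}{4}f^2$, matching $\phi = f$, $\delta = \frac{\Delta_K-1}{4}f^2$). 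The first step is therefore to write down the regular representation of $\mathcal{O}_{K,f}$ acting on itself (or rather on $\mathcal{O}_{K,f}/N\mathcal{O}_{K,f}$ as a free $\Z/N\Z$-module of rank $2$) with respect to the basis $\{1,\tau\}$: multiplication by $a + b\tau$ sends $1 \mapsto a + b\tau$ and $\tau \mapsto b\tau^2 + a\tau = b\delta + (a + b\phi)\tau$ (using $\tau^2 = \phi\tau + \delta$). In the ordered basis $\{1,\tau\}$ this is exactly the matrix $\begin{pmatrix} a + b\phi & b\delta \\ b & a\end{pmatrix}$; conjugating by $\mathrm{diag}(1,\delta)$ or simply re-choosing the basis gives the matrix $c_{\delta,\phi}(a,b) = \begin{pmatrix} a + b\phi & b \\ \delta b & a\end{pmatrix}$ as in the definition. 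So define $\Psi\colon \mathcal{O}_{K,f}/N\mathcal{O}_{K,f} \to M_2(\Z/N\Z)$ by $a + b\tau \mapsto c_{\delta,\phi}(a,b)$.

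The second step is to verify $\Psi$ is a ring homomorphism. Additivity is immediate since $c_{\delta,\phi}(a,b)$ is linear in $(a,b)$; multiplicativity is the one computation worth doing, namely that $c_{\delta,\phi}(a,b)\,c_{\delta,\phi}(a',b')$ equals $c_{\delta,\phi}$ evaluated at the coefficients of $(a+b\tau)(a'+b'\tau) = (aa' + bb'\delta) + (ab' + a'b + bb'\phi)\tau$ — this follows either by direct matrix multiplication or, more cleanly, because $\Psi$ is the regular representation of the $\Z/N\Z$-algebra $(\Z/N\Z)[\tau] = \mathcal{O}_{K,f}/N\mathcal{O}_{K,f}$ and the regular representation is always an algebra homomorphism. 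It sends $1 = 1 + 0\cdot\tau$ to the identity matrix. Injectivity of $\Psi$ is clear since $\{1,\tau\}$ is a $\Z/N\Z$-basis, so $c_{\delta,\phi}(a,b) = I$ forces $a \equiv 1$, $b\equiv 0$. The image is by definition $\mathcal{C}_{\delta,\phi}(N)$ together with the non-invertible matrices $c_{\delta,\phi}(a,b)$; more precisely $\Psi$ is a ring isomorphism onto the subring $\{c_{\delta,\phi}(a,b) : a,b\in\Z/N\Z\}$ of $M_2(\Z/N\Z)$.

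The third step is to pass to units. An element $x = a + b\tau \in \mathcal{O}_{K,f}/N\mathcal{O}_{K,f}$ is a unit if and only if its image under any ring isomorphism is a unit, i.e. if and only if $c_{\delta,\phi}(a,b) \in \GL(2,\Z/N\Z)$, which is exactly the condition $\det c_{\delta,\phi}(a,b) \in (\Z/N\Z)^\times$ built into the definition of $\mathcal{C}_{\delta,\phi}(N)$. (One can also note $\det c_{\delta,\phi}(a,b) = a^2 + ab\phi - \delta b^2 = \mathrm{Nm}_{K/\Q}(a+b\tau) \bmod N$, so invertibility of the matrix matches invertibility of the norm, hence of the element.) Therefore $\Psi$ restricts to a group isomorphism $(\mathcal{O}_{K,f}/N\mathcal{O}_{K,f})^\times \xrightarrow{\ \sim\ } \mathcal{C}_{\delta,\phi}(N)$. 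Since $(\mathcal{O}_{K,f}/N\mathcal{O}_{K,f})^\times$ is abelian (the ring is commutative), $\mathcal{C}_{\delta,\phi}(N)$ is an abelian subgroup of $\GL(2,\Z/N\Z)$, as claimed.

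There is no real obstacle here; the only point requiring a small amount of care is matching conventions — making sure the quadratic relation satisfied by $\tau$ genuinely produces the matrix $c_{\delta,\phi}(a,b)$ with the signs and the $\phi$-placement as written, in both the $\phi = 0$ and $\phi = f$ regimes, and that the determinant equals the norm form so that the unit condition transports correctly. I would organize the write-up around the observation that $\Psi$ is the (left) regular representation, which makes the ring-homomorphism property automatic and reduces the whole proof to identifying the matrix of multiplication-by-$\tau$ and recording that $\det = \mathrm{Nm}$.
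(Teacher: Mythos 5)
Your proposal is substantively correct, but it differs from the paper in an important structural way: the paper does not prove this lemma at all --- the stated ``proof'' is simply a citation to Lemma~2.5 and Remark~2.6 of Lozano-Robledo's paper. You have supplied an actual argument, and the regular-representation route you take is indeed the standard one (and almost certainly what lives inside the cited reference).

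Two small points deserve tightening. First, your claim that ``$\mathcal{O}_{K,f} = \Z[\tau]$'' with $\tau = f\sqrt{\Delta_K}/2$ is not literally true when $\Delta_Kf^2 \equiv 1 \bmod 4$ and $N$ is odd: in that regime $\tau \notin \mathcal{O}_{K,f}$. What is true, and all that you actually need, is the reduction $\mathcal{O}_{K,f}/N\mathcal{O}_{K,f} \cong (\Z/N\Z)[X]/(X^2 - \phi X - \delta)$; this holds because for $N$ odd the element $2$ is invertible mod $N$, so the change of basis $\omega = f(1+\sqrt{\Delta_K})/2 \mapsto \omega - f/2$ makes sense mod $N$ and completes the square. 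You should phrase the first step at the level of the quotient ring, not the order itself. Second, a bookkeeping slip: with the usual column-vector/column-of-images convention, multiplication by $a+b\tau$ in the ordered basis $\{1,\tau\}$ is the matrix $\left(\begin{smallmatrix} a & b\delta \\ b & a+b\phi \end{smallmatrix}\right)$, not $\left(\begin{smallmatrix} a+b\phi & b\delta \\ b & a \end{smallmatrix}\right)$ as written; the latter is what you get in the ordered basis $\{\tau,1\}$ (with the row convention) or, equivalently, by transposing. The cleanest fix is to work directly in the ordered basis $\{\tau,1\}$, in which multiplication by $a+b\tau$ is exactly $c_{\delta,\phi}(a,b) = \left(\begin{smallmatrix} a+b\phi & b \\ \delta b & a \end{smallmatrix}\right)$, eliminating the need for any diagonal conjugation. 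Everything else --- the ring-homomorphism property via the regular representation, injectivity, the identification $\det c_{\delta,\phi}(a,b) = a^2 + ab\phi - \delta b^2 = \operatorname{Nm}(a+b\tau)$, and the passage to units --- is exactly right.
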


\begin{proof}
    See \cite{lozano-galoiscm}, Lemma 2.5 and Remark 2.6. 
\end{proof}

Next, we recall a result about the size of Cartan sugbgroups of $\GL(2,\Z/N\Z)$, which we will use repeatedly throughout the paper to compute the size of $\mathcal{N}_{\delta,\phi}(N)$.

\begin{lemma}[\cite{lozano-galoiscm}, Lemma 2.5]\label{size_of_cartan}
    Let $f\geq 1$, let $K$ be an imaginary quadratic field of discriminant $\Delta_K$, and let $\mathcal{O}_{K,f}$ be the order of $K$ of conductor $f$ with discriminant $\Delta_Kf^2$. Let $N=p^n$ for some prime $p$ and $n\geq 1$, and let $\mathcal{C}_{\delta,\phi}(N) \cong (\mathcal{O}_{K,f}/N\mathcal{O}_{K,f})^\times$, where $\delta$ and $\phi$ are as defined earlier. Then, 
    \begin{itemize}
        \item[(1)] If $p\mid \Delta_Kf$, then $|\mathcal{C}_{\delta,\phi}(p^n)| = p^{2n-1}(p-1)$.
        \item[(2)] If $\gcd(p,f)=1$ and $p$ is split in $K$, then $|\mathcal{C}_{\delta,\phi}(p^n)| = p^{2(n-1)}(p-1)^2$.
        \item[(3)] If $\gcd(p,f)=1$ and $p$ is inert in $K$, then $|\mathcal{C}_{\delta,\phi}(p^n)| = p^{2(n-1)}(p^2-1)$.
    \end{itemize}
    Finally, in all cases $|\mathcal{C}_{\delta,\phi}(p^{n+1})|/|\mathcal{C}_{\delta,\phi}(p^n)| = p^2$.
\end{lemma}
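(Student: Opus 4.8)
The plan is to prove Lemma~\ref{size_of_cartan} by exploiting the ring isomorphism $\mathcal{C}_{\delta,\phi}(p^n) \cong (\mathcal{O}_{K,f}/p^n\mathcal{O}_{K,f})^\times$ (already recorded in the preceding lemma), so that everything reduces to counting units in the finite ring $R_n := \mathcal{O}_{K,f}/p^n\mathcal{O}_{K,f}$. First I would observe that $\mathcal{O}_{K,f}$ is a free $\Z$-module of rank $2$, so $|R_n| = p^{2n}$ in every case; hence the count of units is $p^{2n}$ minus the number of zero divisors, and it suffices to understand the maximal ideals of $R_n$ and the residue ring $R_1 = \mathcal{O}_{K,f}/p\mathcal{O}_{K,f}$. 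Since $R_n$ is a finite (hence Artinian) ring, $R_n^\times$ is the product of the unit groups of its local factors, and by Hensel/idempotent lifting the local factors of $R_n$ lie over the local factors of $R_1$ with the residue fields unchanged.

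The three cases correspond exactly to the three possibilities for the $\F_p$-algebra $R_1 = \mathcal{O}_{K,f}/p\mathcal{O}_{K,f}$, which has dimension $2$ over $\F_p$. Case (1), $p \mid \Delta_K f$: here $p$ ramifies in $\mathcal{O}_{K,f}$ (or divides the conductor), so $R_1$ is local with maximal ideal $\mathfrak{m}$ of index $p$ and $\mathfrak{m}^2 = 0$, i.e.\ $R_1 \cong \F_p[t]/(t^2)$; then $R_n$ is local with residue field $\F_p$, so $|R_n^\times| = |R_n| - |\mathfrak{m}_n| = p^{2n} - p^{2n-1} = p^{2n-1}(p-1)$. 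Case (2), $p \nmid f$ and $p$ split: $R_1 \cong \F_p \times \F_p$, so $R_n \cong (\Z/p^n)\times(\Z/p^n)$ and $|R_n^\times| = (p^n - p^{n-1})^2 = p^{2(n-1)}(p-1)^2$. Case (3), $p \nmid f$ and $p$ inert: $R_1 \cong \F_{p^2}$ is a field, so $R_n$ is local with residue field $\F_{p^2}$ and maximal ideal of index $p^2$, giving $|R_n^\times| = p^{2n} - p^{2(n-1)} = p^{2(n-1)}(p^2-1)$. In each case I would cite the standard structure of $\mathcal{O}_{K,f}/p\mathcal{O}_{K,f}$ in terms of how $p$ behaves relative to the conductor and the discriminant (for instance, that $p \mid \Delta_K f$ iff $R_1$ is non-reduced, equivalently $\mathcal{O}_{K,f}\otimes\Z_p$ is a non-maximal or ramified order). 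Finally, the ratio claim $|\mathcal{C}_{\delta,\phi}(p^{n+1})|/|\mathcal{C}_{\delta,\phi}(p^n)| = p^2$ follows immediately from the closed forms by inspection, or more conceptually from the exact sequence $1 \to (1+p^n R_{n+1})/(1+p^{n+1}R_{n+1}) \to R_{n+1}^\times \to R_n^\times \to 1$, whose kernel is isomorphic to $p^n R_{n+1}/p^{n+1}R_{n+1} \cong R_1$, which has order $p^2$.

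The only genuine content, and the step I expect to require the most care, is the input fact that the shape of $R_1 = \mathcal{O}_{K,f}/p\mathcal{O}_{K,f}$ as an $\F_p$-algebra is governed precisely by the trichotomy in the statement: $p\mid\Delta_K f$ versus $p\nmid f$ with $p$ split versus $p\nmid f$ with $p$ inert. One clean way to see this is to pick the standard $\Z$-basis $\{1,\omega\}$ of $\mathcal{O}_{K,f}$ with $\omega$ satisfying $\omega^2 + b\omega + c = 0$ where $b^2-4c = \Delta_K f^2$, so $R_1 \cong \F_p[t]/(t^2+bt+c)$, and then the factorization type of $t^2+bt+c$ over $\F_p$ — irreducible, split with distinct roots, or a double root — is dictated by whether $\Delta_K f^2$ is a non-residue, a nonzero residue, or $\equiv 0 \bmod p$ (the last being equivalent to $p\mid\Delta_K f$ since $\Delta_K$ is a fundamental discriminant). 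For $p=2$ one handles the two discriminant parities directly using the appropriate $\delta,\phi$ from the definitions in Section~\ref{notation}; the matrices $c_{\delta,\phi}(a,b)$ are exactly the regular representation of $\mathcal{O}_{K,f}/N\mathcal{O}_{K,f}$ in this basis, which both reconfirms the isomorphism and makes the determinant-unit condition transparent. Everything after establishing this trichotomy is bookkeeping with finite local rings.
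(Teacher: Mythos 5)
The paper states this lemma purely as a citation of \cite{lozano-galoiscm}, Lemma 2.5, without reproducing any argument, so there is no in-paper proof to compare against; your proposal has to stand on its own. It does. Reducing via the preceding lemma to counting units in $R_n := \mathcal{O}_{K,f}/p^n\mathcal{O}_{K,f}$ is the right move, and your trichotomy for $R_1$ is correct: $R_1 \cong \F_p[t]/(t^2)$ when $p \mid \Delta_K f$, $R_1 \cong \F_p\times\F_p$ when $p\nmid f$ and $p$ splits, and $R_1\cong\F_{p^2}$ when $p\nmid f$ and $p$ is inert. The resulting unit counts $p^{2n-1}(p-1)$, $p^{2(n-1)}(p-1)^2$, and $p^{2(n-1)}(p^2-1)$, and the $p^2$ growth factor, all check out.

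Two small places worth tightening if you were to write this out in full. First, the step ``$p\mid\Delta_K f \Longleftrightarrow R_1$ non-reduced $\Longleftrightarrow R_1\cong\F_p[t]/(t^2)$'' should be split into the two sub-cases: for $p\nmid f$ one uses $\mathcal{O}_{K,f}\otimes\Z_p\cong\mathcal{O}_K\otimes\Z_p$ and the standard split/inert/ramified trichotomy for the maximal order; for $p\mid f$ one computes directly with the basis $\{1,f\omega_K\}$ that $(f\omega_K)^2\equiv 0\bmod p$, so $R_1\cong\F_p[t]/(t^2)$ regardless of how $p$ behaves in $K$. Second, the discriminant-of-quadratic criterion you invoke governs the factorization type only for odd $p$; for $p=2$ you must replace it by the $\Delta_K\bmod 8$ criterion (when $2\nmid\Delta_K f$) or the direct non-reducedness computation above (when $2\mid\Delta_K f$). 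You flag the $p=2$ case as needing separate handling, so this is only a matter of being explicit; the conclusion is unaffected.
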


Note that $\mathcal{C}_{\delta,\phi}(N)$ is an index $2$ subgroup of $\mathcal{N}_{\delta,\phi}(N)$. Therefore, $|\mathcal{N}_{\delta,\phi}(N)| = 2\cdot |\mathcal{C}_{\delta,\phi}(N)|$.


\subsection{Preliminary results about commutator subgroups of $\ell$-adic images}

In \cite{lozano-galoiscm}, Lozano-Robledo explicitly describes the groups of $\GL(2,\Z_p)$, up to conjugation, that can occur as images $G_{E,p^\infty}$ for an elliptic curve $E/\Q(j_{K,f})$ with CM by $\mathcal{O}_{K,f}$. The following preliminary results will be used later to bound the sizes of certain subgroups of $\GL(2,\Z/p^n\Z)$.

\begin{lemma}\label{ker_of_N}
    Let $p$ be a prime, let $n\geq 1$ be an integer, and let $\pi : \mathcal{N}_{\delta,\phi}(p^{n+1}) \to \mathcal{N}_{\delta,\phi}(p^n)$ be the natural reduction map on the normalizer of Cartan. Then $|\ker(\pi)| = p^2$.
\end{lemma}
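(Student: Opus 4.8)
The plan is to split $\mathcal{N}_{\delta,\phi}(p^{n+1})$ along its Cartan subgroup and push the computation of $\ker(\pi)$ down to the reduction map between Cartan subgroups, whose behaviour is already recorded in Lemma \ref{size_of_cartan}. Since $p^{n+1}\geq 4$, the matrix $c_1=\left(\begin{smallmatrix}-1&0\\\phi&1\end{smallmatrix}\right)$ has distinct diagonal entries modulo $p^{n+1}$ and hence is not of the form $c_{\delta,\phi}(a,b)$; by the index-$2$ statement recorded after Lemma \ref{size_of_cartan} we thus get a disjoint decomposition
\[
\mathcal{N}_{\delta,\phi}(p^{n+1}) \;=\; \mathcal{C}_{\delta,\phi}(p^{n+1}) \ \sqcup\ c_1\cdot\mathcal{C}_{\delta,\phi}(p^{n+1}).
\]

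First I would rule out that $\ker(\pi)$ meets the non-Cartan coset. Multiplying out,
\[
c_1\cdot c_{\delta,\phi}(a,b) \;=\; \begin{pmatrix} -(a+b\phi) & -b \\ \phi(a+b\phi)+\delta b & a+\phi b \end{pmatrix},
\]
which has trace $-(a+b\phi)+(a+\phi b)=0$; so every element of $c_1\cdot\mathcal{C}_{\delta,\phi}(p^{n+1})$ has trace $0$, whereas $\operatorname{tr}(\operatorname{Id})=2$. An element of this coset lying in $\ker(\pi)$ would force $0\equiv 2\bmod p^n$, i.e.\ $p^n=2$; hence for $p$ odd and for $p=2$ with $n\geq 2$ we conclude $\ker(\pi)\subseteq \mathcal{C}_{\delta,\phi}(p^{n+1})$. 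Granting this containment, $\ker(\pi)$ is exactly the kernel of the reduction $\mathcal{C}_{\delta,\phi}(p^{n+1})\to\mathcal{C}_{\delta,\phi}(p^n)$. Identifying $\mathcal{C}_{\delta,\phi}(p^m)\cong(\mathcal{O}_{K,f}/p^m\mathcal{O}_{K,f})^\times$, this reduction map is surjective for $n\geq 1$ (a residue is a unit mod $p^m$ exactly when it is a unit mod $p$), so its kernel has order $|\mathcal{C}_{\delta,\phi}(p^{n+1})|/|\mathcal{C}_{\delta,\phi}(p^n)|=p^2$ by the last sentence of Lemma \ref{size_of_cartan}. Therefore $|\ker(\pi)|=p^2$.

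The one genuinely delicate point — the main obstacle — is the smallest modulus $p^n=2$ (so $p=2$, $n=1$), where the trace inequality degenerates and, moreover, $\mathcal{C}_{\delta,\phi}(2)$ need not be a proper subgroup of $\mathcal{N}_{\delta,\phi}(2)$. There I would argue by hand: when $\phi=f$ one still has $c_1\bmod 2\notin\mathcal{C}_{\delta,\phi}(2)$, and the lower-left entry $\phi(a+b\phi)+\delta b\equiv f\cdot a\pmod 2$ is odd for every $a,b$ forcing $c_1c_{\delta,\phi}(a,b)\equiv\operatorname{Id}\bmod 2$, so the non-Cartan coset again misses the identity; the remaining finitely many configurations are checked directly. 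Finally, I note a shortcut that avoids the coset bookkeeping entirely: $\pi$ is surjective because $\mathcal{C}_{\delta,\phi}(p^{n+1})\to\mathcal{C}_{\delta,\phi}(p^n)$ is and $c_1$ lifts to $c_1$, whence $|\ker(\pi)|=|\mathcal{N}_{\delta,\phi}(p^{n+1})|/|\mathcal{N}_{\delta,\phi}(p^n)|=|\mathcal{C}_{\delta,\phi}(p^{n+1})|/|\mathcal{C}_{\delta,\phi}(p^n)|=p^2$, using $|\mathcal{N}_{\delta,\phi}|=2|\mathcal{C}_{\delta,\phi}|$ at both levels together with Lemma \ref{size_of_cartan}.
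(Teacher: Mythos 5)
Your closing ``shortcut'' is precisely the paper's argument: $\pi$ is surjective, so $|\ker(\pi)| = |\mathcal{N}_{\delta,\phi}(p^{n+1})|/|\mathcal{N}_{\delta,\phi}(p^n)|$, and combining $|\mathcal{N}_{\delta,\phi}(N)| = 2|\mathcal{C}_{\delta,\phi}(N)|$ with the multiplicativity $|\mathcal{C}_{\delta,\phi}(p^{n+1})|/|\mathcal{C}_{\delta,\phi}(p^n)| = p^2$ from Lemma~\ref{size_of_cartan} gives $p^2$. The paper then supplements this by writing out the kernel matrices explicitly, running through the two cosets of $\mathcal{C}_{\delta,\phi}(p^{n+1})$ and matching diagonal entries. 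Your main body replaces that entry-by-entry check with a trace observation: every element of $c_1\mathcal{C}_{\delta,\phi}(p^{n+1})$ has trace $0$, so membership in $\ker(\pi)$ would force $0\equiv 2 \bmod p^n$. That is tidier than the paper's version, and it correctly isolates $p^n = 2$ as the one modulus where the argument degenerates --- a point the paper elides: its own contradiction ``$-a\equiv 1$ and $a\equiv 1 \bmod p^n$'' is vacuous when $p^n=2$.

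That said, your patch for $p^n=2$ is not quite right, and there is a real issue there. A small slip first: the $(2,1)$-entry of $c_1 c_{\delta,\phi}(a,b)$ is $\phi a + b(\phi^2+\delta)$, not $\phi a$; for $\phi=f$ odd the conclusion $\ker(\pi)\subseteq\mathcal{C}_{\delta,\phi}(4)$ does still hold after a short case check, but not because that entry ``is odd for every $a,b$.'' More seriously, you only consider $\phi=f$, whereas the troublesome subcase is $\phi=0$ (i.e.\ $\Delta_Kf^2\equiv 0 \bmod 4$, so $p=2\mid\Delta_Kf$). There $c_1\equiv I \bmod 2$, hence $\mathcal{N}_{\delta,0}(2)=\mathcal{C}_{\delta,0}(2)$ and the relation $|\mathcal{N}|=2|\mathcal{C}|$ fails at level $2$; the element $c_1 \bmod 4\in\mathcal{N}_{\delta,0}(4)\setminus\mathcal{C}_{\delta,0}(4)$ reduces to $I \bmod 2$, and one computes $|\ker(\pi)| = |\mathcal{N}_{\delta,0}(4)|/|\mathcal{N}_{\delta,0}(2)| = 16/2 = 8 = 2p^2$, not $p^2$. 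So for $p=2$, $n=1$, $\phi=0$ the stated count is actually wrong. This defect is present in the paper's proof as well (both the index-$2$ remark after Lemma~\ref{size_of_cartan} and the diagonal-entry argument silently assume $p^n>2$); it is harmless downstream, since the lemma is only invoked at or past the $2$-adic level of definition, which is always $\geq 2$. But neither your argument nor the paper's establishes the claim for that boundary case, and a clean proof should either exclude it or handle it separately.
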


\begin{proof}
    Since $\pi: \mathcal{N}_{\delta,\phi}(p^{n+1}) \to \mathcal{N}_{\delta,\phi}(p^n)$ is surjective, it follows that $|\ker(\pi)| \ = |\mathcal{N}_{\delta,\phi}(p^{n+1})| / |\mathcal{N}_{\delta,\phi}(p^n)|$. By Lemma 2.5 of \cite{lozano-galoiscm}, we know that the size of the normalizer grows by a factor of $p^2$ at every level, i.e., $|\ker(\pi)| = p^2$. 
    Note that $\mathcal{N}_{\delta,\phi}(p^{n+1}) = \left\{ \mathcal{C}_{\delta,\phi}(p^{n+1}), \ c_\varepsilon\cdot \mathcal{C}_{\delta,\phi}(p^{n+1})\right\}$, so it suffices to check if the following matrices are in $\ker(\pi)$:
    \begin{itemize}
        \item Suppose $c_{\delta,\phi}(a,b)\cdot c_\varepsilon  \bmod p^{n+1} \ \equiv \ I \bmod p^n$. Then $-a \bmod p^{n+1} \equiv 1 \bmod p^n$, which contradicts that $a \bmod p^{n+1} \equiv 1 \bmod p^n$. Therefore, $c_{\delta,\phi}(a,b)\cdot c_\varepsilon \not\in \ker(\pi)$.
        \item Suppose $c_{\delta,\phi}(a,b) \bmod p^{n+1} \ \equiv \ I \bmod p^n$. Since $a \bmod p^{n+1} \equiv 1 \bmod p^n$, there are $p^{n+1}/p^n=p$ choices for $a$, namely $1 + k_1\cdot p^n$, where $0 \leq k_1 \leq p-1$. Similarly, since $b \bmod p^{n+1} \equiv 0 \bmod p^n$, there are $p^{n+1}/p^n=p$ choices for $b$, namely $k_2 \cdot p^n$, where $0\leq k_2 \leq p-1$.  It follows that $a+b\phi \bmod p^{n+1} \equiv 1 \bmod p^n$ and that $\delta b \bmod p^{n+1} \equiv 0 \bmod p^n$. Therefore, $c_{\delta,\phi}(a,b) \in \ker(\pi)$, and there are $p\cdot p = p^2$ such matrices.
    \end{itemize}
    Thus, we have shown that $\ker(\pi)$ has order $p^2$, and we have found $p^2$ matrices that are contained in $\ker(\pi)$. We conclude that the kernel of the natural reduction of the normalizer of Cartan is
    \begin{align*}
        \ker(\pi) &= \left\{\begin{pmatrix}1 + k_1\cdot p^n + (k_2\cdot p^n)\phi & k_2\cdot p^n\\ \delta(k_2\cdot p^n) & 1 + k_1\cdot p^n\end{pmatrix}: 0 \leq k_1, k_2 \leq p-1 \right\}, \\
        &= \left\{ \begin{pmatrix}1&0\\0&1\end{pmatrix} + p^n \begin{pmatrix}k_1 + \phi k_2& k_2\\ \delta k_2 & k_1\end{pmatrix}: 0 \leq k_1, k_2 \leq p-1\right\}.
    \end{align*}
\end{proof}

\begin{remark}
    The level of definition of an image is the smallest level at which every level above it is the full inverse image into the normalizer of Cartan. In particular, the full inverse image of the identity matrix is in the pull back, so if the level of definition of an image is $p^k$ for $k\geq 1$, then the full kernel of reduction mod $p^k$ (and above) is contained in the image mod $p^k$ (and above). See Proposition 12.1.4 of \cite{elladic} for the level of definition of an image attached to an elliptic curve $E/\Q(j(E))$ with CM. 
\end{remark}

The following result tells us that the kernel of the natural reduction map on $\mathcal{N}_{\delta,\phi}(p^{n+1})$ is the same as the kernel of the natural reduction map on $G_{E,p^{n+1}}$, after the level of definition of $G_{E,p^n}$.

\begin{lemma}\label{ker_of_G_equals_N}
    Let $p$ be a prime and let $G_{E,p^\infty}$ be any of the possible $p$-adic images. For $n\geq 1$, let $\pi|_{G}: G_{E,p^{n+1}} \to G_{E,p^n}$ be the restriction of $\pi$ to the image $G_{E,p^{n+1}}$. For $k \geq 1$, let $p^k$ be the level of definition of the image $G_{E,p^n}$. Then for $n\geq k$, we have that $\ker(\pi|_{G}) = \ker(\pi)$.
\end{lemma}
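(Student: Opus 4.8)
The plan is to translate the hypothesis ``$p^k$ is the level of definition'' into a statement about full preimages, and then chase the relevant reduction maps. Throughout, write $\pi_{m,\ell}\colon \mathcal{N}_{\delta,\phi}(p^m)\to\mathcal{N}_{\delta,\phi}(p^\ell)$ for the natural reduction map whenever $m\geq\ell$, so that $\pi=\pi_{n+1,n}$ and, by transitivity of reduction, $\pi_{n+1,k}=\pi_{n,k}\circ\pi$ (this composition is defined precisely because $n\geq k$). By definition of the level of definition (see the Remark above), for every $m\geq k$ the image $G_{E,p^m}$ is the full inverse image of $G_{E,p^k}$ under $\pi_{m,k}$; equivalently, $\ker(\pi_{m,k})\subseteq G_{E,p^m}$.

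The first step is to prove the inclusion $\ker(\pi)\subseteq G_{E,p^{n+1}}$. Given $g\in\ker(\pi)$, we have $g\in\mathcal{N}_{\delta,\phi}(p^{n+1})$ with $\pi(g)=I$, so $\pi_{n+1,k}(g)=\pi_{n,k}(\pi(g))=\pi_{n,k}(I)=I\in G_{E,p^k}$, using that the identity always lies in the image of a Galois representation. Hence $g$ lies in the full inverse image of $G_{E,p^k}$ under $\pi_{n+1,k}$, which by the level-of-definition property (applicable since $n+1\geq k$) equals $G_{E,p^{n+1}}$. The second step is then purely formal: $\ker(\pi|_{G})=G_{E,p^{n+1}}\cap\ker(\pi)$ by definition of the restriction, and combining this with the inclusion just proved gives $\ker(\pi|_{G})=\ker(\pi)$. (Alternatively one can finish by a cardinality count: Lemma~\ref{ker_of_N} gives $|\ker(\pi)|=p^2$, the level-of-definition property makes the index $[\mathcal{N}_{\delta,\phi}(p^m):G_{E,p^m}]$ constant for $m\geq k$, so $|\ker(\pi|_{G})|=|G_{E,p^{n+1}}|/|G_{E,p^n}|=p^2=|\ker(\pi)|$, and $\ker(\pi|_{G})\subseteq\ker(\pi)$ forces equality.)

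The only real subtlety, and it is minor, is the bookkeeping with levels: the level of definition controls inverse images relative to level $p^k$, not relative to the intermediate level $p^n$, so one must route through the composition $\pi_{n+1,k}=\pi_{n,k}\circ\pi$ rather than trying to apply the property directly to the one-step map $\pi$. Once the reduction maps are lined up correctly the argument is essentially formal, and in particular it does not require the explicit matrix description of $\ker(\pi)$ from Lemma~\ref{ker_of_N}.
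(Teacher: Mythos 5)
Your argument is correct and follows essentially the same route as the paper: both establish the trivial inclusion $\ker(\pi|_G)\subseteq\ker(\pi)$ (which for you is implicit in writing $\ker(\pi|_G)=G_{E,p^{n+1}}\cap\ker(\pi)$) and then use the level-of-definition property to get the reverse inclusion by showing $\ker(\pi)\subseteq G_{E,p^{n+1}}$. The only difference is presentational: you make explicit the composition $\pi_{n+1,k}=\pi_{n,k}\circ\pi$ and the intermediate fact $\ker(\pi)\subseteq G_{E,p^{n+1}}$, which the paper's terser phrasing leaves implicit.
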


\begin{proof}
    Let $\pi|_{G} : G_{E,p^{n+1}} \to G_{E,p^n}$ be the restriction of $\pi$ to the image $G_{E,p^{n+1}}$ for all $n\geq 1$, and for $k\geq 1$ let $p^k$ be the level of definition of $G_{E,p^n}$. The kernel of $\pi|_{G}$ contains all of the matrices in $G_{E,p^{n+1}}$ that map to the identity mod $p^n$. This is a subgroup of the matrices in $\mathcal{N}_{\delta,\phi}(p^{n+1})$ that map to the identity mod $p^n$, so $\ker(\pi|_{G}) \subseteq \ker(\pi)$. By the level of definition, the full inverse image of the identity matrix mod $p^k$ is the kernel of reduction mod $p^k$, so it follows that $\ker(\pi) \subseteq \ker(\pi|_{G})$ for $n\geq k$. Therefore, we conclude that, after the level of definition, $\ker(\pi) = \ker(\pi|_{G})$.
\end{proof}

Next, we prove that the natural reduction map on the commutator subgroup of an image of a Galois representation is surjective.

\begin{lemma}\label{pi_comm_surj}
    Let $p$ be a prime, let $G_{E,p^\infty}$ be any $p$-adic image, and let $G'_{E,p^\infty}$ be the commutator subgroup of $G_{E,p^\infty}$. Let $\pi|_{G'} \colon G'_{E,p^{n+1}} \to G'_{E,p^n}$ be the restriction of $\pi$ to the commutator subgroup. Then $\pi|_{G'}$ is surjective for all $n\geq 1$. 
\end{lemma}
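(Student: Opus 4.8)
The plan is to reduce the surjectivity of $\pi|_{G'}$ to the surjectivity of $\pi|_G \colon G_{E,p^{n+1}} \to G_{E,p^n}$, which holds because $G_{E,p^\infty}$ is a closed subgroup of $\GL(2,\widehat{\Z})$ and the maps in the inverse system are compatible by construction. The key observation is the general group-theoretic fact: if $\varphi \colon H \to Q$ is a surjective group homomorphism, then $\varphi$ maps the commutator subgroup $H'$ onto $Q'$, that is, $\varphi(H') = Q'$. Indeed, $\varphi([h_1,h_2]) = [\varphi(h_1),\varphi(h_2)]$, so $\varphi(H')$ contains all commutators of $Q$ and hence $Q' \subseteq \varphi(H')$; conversely $\varphi(H')$ is generated by the images of commutators, each of which lies in $Q'$, so $\varphi(H') \subseteq Q'$.

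First I would record that the reduction map $\pi \colon \mathcal{N}_{\delta,\phi}(p^{n+1}) \to \mathcal{N}_{\delta,\phi}(p^n)$ restricts to a surjective homomorphism $\pi|_G \colon G_{E,p^{n+1}} \to G_{E,p^n}$ for every $n \geq 1$: this is exactly the statement that $G_{E,p^{n+1}}$ surjects onto $G_{E,p^n}$ under reduction modulo $p^n$, which is immediate since both are images of the single Galois representation $\rho_{E,p^\infty}$ (or $\rho_{E,p^{n+1}}$) composed with the reduction map, and $\Gal(\overline{\Q(j_{K,f})}/\Q(j_{K,f}))$ acts through the compatible system. Then I would apply the general fact above with $H = G_{E,p^{n+1}}$, $Q = G_{E,p^n}$, and $\varphi = \pi|_G$, concluding that $\pi|_G(G'_{E,p^{n+1}}) = G'_{E,p^n}$. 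Since $\pi|_{G'}$ is by definition the restriction of $\pi|_G$ to $G'_{E,p^{n+1}}$ with codomain $G'_{E,p^n}$, this says precisely that $\pi|_{G'}$ is surjective.

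I do not anticipate a serious obstacle here; the statement is essentially a formal consequence of the surjectivity of reduction on the full images together with the elementary behavior of commutator subgroups under surjections. The only point that warrants a sentence of care is confirming that $\pi|_G$ is genuinely surjective at every level — that the image $G_{E,p^\infty}$ is defined so that its mod-$p^{n+1}$ reduction surjects onto its mod-$p^n$ reduction. This follows directly from the definition $G_{E,p^\infty} = \operatorname{im}\rho_{E,p^\infty}$ and the commutativity of reduction maps in the inverse limit $\GL(2,\Z_p) = \varprojlim \GL(2,\Z/p^n\Z)$, so no profinite subtlety (such as needing to pass to a limit or invoke compactness) actually enters; everything takes place at finite levels.
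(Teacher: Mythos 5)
Your proof is correct and takes essentially the same route as the paper: both reduce the claim to surjectivity of $\pi|_G \colon G_{E,p^{n+1}} \to G_{E,p^n}$ and then lift commutators through it. Your phrasing via the general fact that a surjective homomorphism $\varphi\colon H\to Q$ satisfies $\varphi(H')=Q'$ is in fact slightly tighter, since it makes explicit that hitting the generating set of single commutators suffices; the paper's proof writes an arbitrary $\gamma\in G'_{E,p^n}$ as a single commutator $ABA^{-1}B^{-1}$, which strictly speaking needs the same remark (that the image is a subgroup containing all commutators) to cover products of commutators.
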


\begin{proof}
    Recall that $\pi|_G \colon G_{E,p^{n+1}} \to G_{E,p^n}$ the restriction of the natural reduction map $\pi$ to the image $G_{E,p^{n+1}}$ for all $n\geq 1$ is surjective. Let $\pi|_{G'} \colon G'_{E,p^{n+1}} \to G'_{E,p^n}$ be the restriction of $\pi$ to the commutator subgroup $G'_{E,p^{n+1}}$. Recall that the definition of the commutator subgroup is 
    \begin{align*}
        G'_{E,p^n} = \{ABA^{-1}B^{-1} : A,B \in G_{E,p^n}\}.
    \end{align*}
    Let $\gamma \in G'_{E,p^n}$ such that $\gamma = ABA^{-1}B^{-1}$ for some $A,B \in G_{E,p^n}$. Since $\pi|_{G}$ is surjective, we know there exist $A',B' \in G_{E,p^{n+1}}$ such that $\pi|_{G}(A') = A$ and $\pi|_{G}(B') = B$. By definition of a commutator subgroup, let $\gamma' = A'B'(A')^{-1}(B')^{-1} \in G'_{E,p^{n+1}}$. Since $\pi$ is a homomorphism and $\pi|_{G}$ and $\pi|_{G'}$ act the same on elements of $G'_{E,p^{n+1}}$, we have the following:
    \begin{align*}
        \pi|_{G'}(\gamma') &= \pi|_{G'}(A'B'(A')^{-1}(B')^{-1}), \\
        &= \pi|_{G}(A'B'(A')^{-1}(B')^{-1}), \\
        &= \pi|_{G}(A')\cdot \pi|_{G}(B')\cdot \pi|_{G}((A')^{-1})\cdot \pi|_{G}((B')^{-1}), \\
        &= ABA^{-1}B^{-1}, \\
        &= \gamma \in G'_{E,p^n}.
    \end{align*}
    Therefore, $\pi|_{G'}$ is surjective. 
\end{proof}

The following lemma tells us about the size of the kernel of the natural reduction map on the commutator subgroup of an image of a Galois representation. 

\begin{lemma}\label{comm_ker_size}
    Let $p$ be a prime, let $G_{E,p^\infty}$ be any of the possible $p$-adic images, and let $G'_{E,p^\infty}$ be the commutator subgroup of $G_{E,p^\infty}$. Let $\pi|_{G'}\colon G'_{E,p^{n+1}} \to G'_{E,p^n}$ be the restriction of $\pi$ to the commutator subgroup. Then  $|\ker(\pi|_{G'})| = 1,p,$ or $p^2$.
\end{lemma}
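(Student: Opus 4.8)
The plan is to deduce the bound on $|\ker(\pi|_{G'})|$ from the established facts about the reduction map on the full normalizer. By Lemma \ref{ker_of_N}, the kernel of $\pi\colon \mathcal{N}_{\delta,\phi}(p^{n+1}) \to \mathcal{N}_{\delta,\phi}(p^n)$ has order $p^2$. Since $G_{E,p^{n+1}} \subseteq \mathcal{N}_{\delta,\phi}(p^{n+1})$ (after conjugating to a suitable basis, by Theorem \ref{gal-repns-att-to-ec-w-cm}), and $G'_{E,p^{n+1}} \subseteq G_{E,p^{n+1}}$, the kernel $\ker(\pi|_{G'})$ is a subgroup of $\ker(\pi)$. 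Therefore $|\ker(\pi|_{G'})|$ divides $p^2$, which already gives that $|\ker(\pi|_{G'})| \in \{1, p, p^2\}$.

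First I would spell out the containment chain carefully: $\ker(\pi|_{G'}) = \{g \in G'_{E,p^{n+1}} : \pi(g) = I\} \subseteq \{g \in \mathcal{N}_{\delta,\phi}(p^{n+1}) : \pi(g) = I\} = \ker(\pi)$. Then, because a subgroup of a group of order $p^2$ has order $1$, $p$, or $p^2$ by Lagrange's theorem, the conclusion follows immediately. One could also invoke Lemma \ref{ker_of_G_equals_N} together with Lemma \ref{pi_comm_surj} to say more in specific ranges of $n$ — namely, once $n$ is at least the level of definition of $G_{E,p^n}$, the full kernel of reduction lies in $G_{E,p^n}$ — but this refinement is not needed for the stated bound; the crude Lagrange argument suffices.

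There is essentially no obstacle here: the statement is a direct corollary of Lemma \ref{ker_of_N} plus Lagrange's theorem. If anything, the only point requiring a sentence of care is making sure the reduction map $\pi$ restricted to $G'_{E,p^{n+1}}$ genuinely lands in $G'_{E,p^n}$ and that its kernel is what we claim — but this is exactly the content of Lemma \ref{pi_comm_surj}, which guarantees $\pi|_{G'}$ is well-defined and surjective onto $G'_{E,p^n}$, so that $\ker(\pi|_{G'})$ is a genuine normal subgroup of $G'_{E,p^{n+1}}$ sitting inside $\ker(\pi)$. Thus the proof is: observe $\ker(\pi|_{G'}) \leq \ker(\pi)$, note $|\ker(\pi)| = p^2$ by Lemma \ref{ker_of_N}, and conclude by Lagrange that $|\ker(\pi|_{G'})| \in \{1, p, p^2\}$.
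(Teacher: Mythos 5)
Your proof is correct and takes essentially the same approach as the paper: observe $\ker(\pi|_{G'}) \subseteq \ker(\pi)$, invoke Lemma~\ref{ker_of_N} to get $|\ker(\pi)| = p^2$, and conclude by Lagrange that $|\ker(\pi|_{G'})|$ divides $p^2$. The extra care you take in spelling out the containment chain and citing Lemma~\ref{pi_comm_surj} for well-definedness is sound but matches what the paper implicitly relies on.
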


\begin{proof}
    Let $\pi|_{G'}\colon G'_{E,p^{n+1}} \to G'_{E,p^n}$ be the restriction of $\pi$ to the commutator subgroup $G'_{E,p^{n+1}}$. Since $\ker(\pi|_{G'}) \subseteq \ker(\pi)$ and $|\ker(\pi)|=p^2$ by Lemma \ref{ker_of_N}, it follows that $|\ker(\pi|_{G'})|$ divides $p^2$. 
\end{proof}


\section{Computing lower bounds for commutator subgroups}\label{main_method}

In this section, we will outline how to compute a lower bound for the commutator subgroup of $G_{E,p^n}$. Let $\pi|_{G'} \colon G'_{E,p^{n+1}} \to G'_{E,p^n}$ be the restriction of $\pi$ to the commutator subgroup $G'_{E,p^{n+1}}$. By Lemma \ref{pi_comm_surj}, the map $\pi|_{G'}$ is surjective, so it follows that for $n\geq 1$,
\[
    |G'_{E,p^{n+1}}| \ \geq \ |G'_{E,p^{n}}|\cdot |\ker(\pi|_{G'})|,
\]
where this is an inequality since we will only provide a lower bound for $|\ker(\pi|_{G'})|$. By Lemma \ref{comm_ker_size}, we know that $|\ker(\pi|_{G'})|$ divides $p^2$. If we can construct an element of $G'_{E,p^{n+1}}$ that is a non-trivial element of $\ker(\pi|_{G'})$, then we can show that $|\ker(\pi|_{G'})|$ is bounded below by $p$. 

One can compute elements of $\ker(\pi|_{G'})$ as commutators as follows. Recall that $G_{E,p^\infty}$ is generated by Cartan matrices, of the form $c_{\delta,\phi}(a,b)$, and complex conjugation, of the form $c_\varepsilon$. Let
\[
A_{p^{n+1}} = c_\varepsilon \bmod p^{n+1} \quad \text{and} \quad B_{p^{n+1}} = c_{\delta,\phi}(a,b) \bmod p^{n+1},
\]
where $c_{\delta,\phi}(a,b)$ is a generator of $G_{E,p^\infty}$, and $A_{p^{n+1}} \equiv A_{p^n} \bmod p^n$ and $B_{p^{n+1}} \equiv B_{p^n} \bmod p^n$. Using $A_{p^{n+1}}$ and $B_{p^{n+1}}$, we can compute the following commutator element 
\[
Y_{p^{n+1}} \ = \ A_{p^{n+1}}B_{p^{n+1}}(A_{p^{n+1}})^{-1}(B_{p^{n+1}})^{-1} \ \in \ G'_{E,p^{n+1}}.
\]
Suppose that the level of definition of $G_{E,p^n}$ is $p^k$ for some $k\geq 1$. Then for $n\geq k$, the commutator subgroup $G'_{E,p^{n+1}}$ is the full inverse image of $G'_{E,p^n}$ since $\pi|_{G'}$ is surjective.
Therefore, we can multiply $A_{p^{n+1}}$ and $B_{p^{n+1}}$ by an element in $\ker(\pi|_{G})$ since these products will still reduce to $A_{p^n}$ and $B_{p^n}$ in $G_{E,p^n}$, respectively. From Lemma \ref{ker_of_N} and Lemma \ref{ker_of_G_equals_N}, we know precisely which matrices are in $\ker(\pi|_G)$. Take $\kappa \in \ker(\pi|_{G})$ and let 
\[
A_{p^{n+1}}' = A_{p^{n+1}} \cdot \kappa \quad \text{and} \quad B_{p^{n+1}}' = B_{p^{n+1}}\cdot \kappa.
\]
Note that $A_{p^{n+1}}'$ reduces to $A_{p^n} = c_\varepsilon \bmod p^n$, but is not $A_{p^{n+1}} = c_\varepsilon \bmod p^{n+1}$. Using $A_{p^{n+1}}'$ and $B_{p^{n+1}}'$, we can compute the following commutator element
\[
Y_{p^{n+1}}' \ = \ A_{p^{n+1}}'B_{p^{n+1}}'(A_{p^{n+1}}')^{-1}(B_{p^{n+1}}')^{-1} \ \in \ G'_{E,p^{n+1}}.
\]
Taking the quotient of commutators, it follows that $Y_{p^{n+1}}/Y_{p^{n+1}}'$ is a commutator in $G'_{E,p^{n+1}}$. By construction, $Y_{p^{n+1}}/Y_{p^{n+1}}'$ lies in the intersection $G'_{E,p^{n+1}} \cap \ker(\pi|_{G})$. Thus, $Y_{p^{n+1}}/Y_{p^{n+1}}' \in \ker(\pi|_{G'})$. If $Y_{p^{n+1}}/Y_{p^{n+1}}'$ is a non-trivial element in $\ker(\pi|_{G'})$, then $|\ker(\pi|_{G'})| \ \geq p$. 

\begin{remark}
    This method relies on the choice of $\kappa$, which there are only finitely many of. If $Y_{p^{n+1}}/Y_{p^{n+1}}'$ is trivial, then one can try another choice of $\kappa \in \ker(\pi|_{G})$. 
\end{remark}


\section{Proofs of the Main Results}\label{proofs_main_results}

In this section, we prove Theorem \ref{pndivDeltaKf} in Section \ref{sect-pndivdisc}, Theorems \ref{pdivDeltaKf} and \ref{3divDeltaKf} in Section \ref{sect-pdivdisc}, and Theorem \ref{casep2} in Section \ref{sect-casep2}. 

\begin{remark}\label{Magma_verified}
    All computations done in this section have been verified using \verb|Magma| \cite{Magma}. The \verb|Magma| code used can be found on the author's GitHub repository \cite{GitHubPaperCode}. 
\end{remark}


\subsection{Odd primes not dividing the discriminant}\label{sect-pndivdisc}

Let $p$ be an odd prime not dividing $2\Delta_Kf$. In the cases where $p$ is split or inert, we can define the Cartan subgroup as follows. If $p$ is split in $K$, then we can define the \textit{split Cartan subgroup} $\mathcal{C}_{\mathrm{sp}}(p^n)$ as
\[
    \mathcal{C}_{\mathrm{sp}}(p^n) \ = \ \left\{ c_{\mathrm{sp}}(a,b) = \begin{pmatrix}a&0\\0&b\end{pmatrix} : a,b \in \Z/p^n\Z, ab \neq 0 \right\} \subseteq \GL(2,\Z/p^n\Z),
\]
where the normalizer of the split Cartan subgroup $\mathcal{N}_{\mathrm{sp}}(p^n)$ is 
\[
    \mathcal{N}_{\mathrm{sp}}(p^n) \ = \ \left\langle \gamma_1 = \begin{pmatrix}0&1\\1&0\end{pmatrix}, \mathcal{C}_{\mathrm{sp}}(p^n)\right\rangle \subseteq \GL(2,\Z/p^n\Z).
\]
Note that the size of the split Cartan subgroup is given in Lemma \ref{size_of_cartan}.(2). If $p$ is inert in $K$, then we can define the \textit{non-split Cartan subgroup} $\mathcal{C}_{\mathrm{ns}}(p^n)$ as
\[
    \mathcal{C}_{\mathrm{ns}}(p^n) \ = \ \left\{ c_{\mathrm{ns}}(a,b) =\begin{pmatrix}a&\delta b\\b&a\end{pmatrix} : a,b \in \Z/p^n\Z, (a,b) \neq (0,0) \right\} \subseteq \GL(2,\Z/p^n\Z),
\]
where $\delta$ is defined as before and the normalizer of the non-split Cartan subgroup $\mathcal{N}_{\mathrm{ns}}(p^n)$ is 
\[
    \mathcal{N}_{\mathrm{ns}}(p^n) \ = \ \left\langle c_{1} = \begin{pmatrix}1&0\\0&-1\end{pmatrix}, \mathcal{C}_{\mathrm{sp}}(p^n)\right\rangle \subseteq \GL(2,\Z/p^n\Z).
\]
Note that the size of the non-split Cartan subgroup is given in Lemma \ref{size_of_cartan}.(3). 

We proceed with the proof of Theorem \ref{pndivDeltaKf}.

\begin{proof}[Proof of Theorem \ref{pndivDeltaKf}]
    Let $E/\Q$ be an elliptic curve with CM by an order $\mathcal{O}_{K,f}$, $f\geq 1$. Let $p$ be an odd prime not dividing $\Delta_Kf$. Let $G_{E,p^n} = \Gal(\Q(E[p^n])/\Q)$ and let $G'_{E,p^n}$ denote the commutator subgroup of $G_{E,p^n}$.  
    By Lemma \ref{rootsofunity_in_divfld}, it follows that $\Q(\zeta_{p^n}) \subseteq \Q(E[p^n])$. By Lemma \ref{CMfld_in_divfld}, it follows that $K\subseteq \Q(E[p^n])$. Therefore, $K(\zeta_{p^n})$ is an abelian extension contained in $\Q(E[p^n])$. Thus, an upper bound for the size of the commutator subgroup is,
    \[
    |G'_{E,p^n}| \ \leq \ \frac{|G_{E,p^n}|}{|K(\zeta_{p^n})/\Q|} \ = \ \frac{|G_{E,p^n}|}{2\cdot (p^n - p^{n-1})},
    \]
    where the size of $G_{E,p^n}$ can be derived from Lemma \ref{size_of_cartan}.

    \medskip

    First, let $E/\Q$ be an elliptic curve with CM by an order $\mathcal{O}_{K,f}$, where $j(E)\neq 0$, and let $p \geq 3$. Theorem 1.2.(4) of \cite{lozano-galoiscm} implies that $G_{E,p^n} \cong \mathcal{N}_{\delta,\phi}(p^n)$, and Theorem 1.2.(2) of \cite{lozano-galoiscm} says that $G_{E,p^\infty}$ is defined at level $p$ for all odd primes, if $j(E)\neq 0$.  

    \begin{itemize}
        \item[(i)] If the $G_{E,p^n}$ is the normalizer of a split Cartan subgroup, then we have the following upper bound on the size of the commutator subgroup,
            \[
                |G'_{E,p^n}| \ \leq \ \frac{|G_{E,p^n}|}{|K(\zeta_{p^n})/\Q|} \ = \ \frac{2\cdot p^{2(n-1)}(p-1)^2}{2\cdot p^{n-1}(p-1)} \ = \  p^{n-1}\cdot (p-1).
            \]
        Using induction and the method outlined in Section \ref{main_method}, we will prove that the bound above is also a lower bound on the size of the commutator subgroup, i.e., $|G'_{E,p^n}| \geq p^{n-1}\cdot (p-1)$. 
        
        \underline{Base case:} When $n=1$, one can compute that $|G'_{E,p}| = p-1$. Let $\pi|_{G'} \colon G'_{E,p^2} \to G'_{E,p}$ be the natural reduction map. By Lemma \ref{pi_comm_surj}, $\pi|_{G'}$ is surjective, so it follows that 
        \[
        |G'_{E,p^2}| \ \geq \ |G'_{E,p}|\cdot |\ker(\pi|_{G'})| \ \geq \ (p-1).
        \]
        We can construct a non-trivial commutator element in the kernel of $\pi|_{G'}$ as follows. Let $A_{p^2} = \gamma_1 \bmod p^2$, let $B_{p^2} = c_{\mathrm{sp}}(a,b) \bmod p^2$, and compute $Y_{p^2}$. Take $\kappa = c_{\mathrm{sp}}(1,p+1) \bmod p^2$ and compute $A_{p^2}'$, $B_{p^2}'$, and $Y_{p^2}'$. Using \verb|Magma| to compute the quotient of the commutators $Y_{p^2}$ and $Y_{p^2}'$, we get $Y_{p^2}/Y_{p^2}' = c_{\mathrm{sp}}(1/(p+1),p+1) \bmod p^2 \equiv I \bmod p$ (see Remark \ref{Magma_verified}). Therefore, $|\ker(\pi|_{G'})| \geq p$ and it follows that $|G'_{E,p^2}| \geq (p-1)\cdot p$.
        
        \underline{Induction:} Suppose that $|G'_{E,p^n}| \geq (p-1)\cdot p^{n-1}$ for all $n\geq 1$. Let $\pi|_{G'} \colon G'_{E,p^{n+1}} \to G'_{E,p^n}$ be the natural reduction map. Since $\pi|_{G'}$ is surjective, it follows that $|G'_{E,p^{n+1}}| \geq (p-1)\cdot p^{n-1}$.
        We can construct a non-trivial commutator element in the kernel of $\pi|_{G'}$ as in the base case. Let $A_{p^{n+1}} = \gamma_1 \bmod p^{n+1}$, let $B_{p^{n+1}} = c_{\mathrm{sp}}(a,b) \bmod p^{n+1}$, and compute $Y_{p^{n+1}}$. Take $\kappa = c_{\mathrm{sp}}(1,p^n+1) \bmod p^{n+1}$ and compute $A_{p^{n+1}}'$, $B_{p^{n+1}}'$, and $Y_{p^{n+1}}'$. A \verb|Magma| computation verifies that $Y_{p^{n+1}}/Y_{p^{n+1}}' = c_{\mathrm{sp}}(1/(p^n+1),p^n+1) \bmod p^{n+1} \equiv I \bmod p^n$. Therefore, $|\ker(\pi|_{G'})| \geq p$ and it follows that $|G'_{E,p^{n+1}}| \geq (p-1)\cdot p^{n-1}\cdot p = (p-1)\cdot p^{n}$.

        By induction, we proved that $|G'_{E,p^n}| \geq (p-1)\cdot p^{n-1}$. Therefore, $|G'_{E,p^n}| = (p-1)\cdot p^{n-1}$ for $n\geq 1$, and hence 
        \[
            |M_E(p^n)| \ = \ \frac{|G_{E,p^n}|}{|G'_{E,p^n}|} \ = \ \frac{2\cdot p^{2(n-1)}(p-1)^2}{(p-1)\cdot p^{n-1}} \ = \ 2\cdot p^{n-1} \cdot (p-1),
        \]
        which is the degree of $K(\zeta_{p^n})/\Q$. Thus, $M_E(p^n) = K(\zeta_{p^n})$.

        \item[(ii)] If the $G_{E,p^n}$ is the normalizer of a non-split Cartan subgroup, then we have the following upper bound on the size of the commutator subgroup,
            \[
                |G'_{E,p^n}| \ \leq \ \frac{|G_{E,p^n}|}{|K(\zeta_{p^n})/\Q|} \ = \ \frac{2\cdot p^{2(n-1)}(p^2-1)}{2\cdot p^{n-1}(p-1)} \ = \ p^{n-1}(p+1).
            \]
        Using induction and the method outlined in Section \ref{main_method}, we will prove that the bound above is also a lower bound on the size of the commutator subgroup, i.e., $|G'_{E,p^n}| \geq p^{n-1}\cdot (p+1)$. 
        
        \underline{Base case:} When $n=1$, one can compute that $|G'_{E,p}| = p+1$. Let $\pi|_{G'} \colon G'_{E,p^2} \to G'_{E,p}$ be the natural reduction map. Since $\pi|_{G'}$ is surjective, it follows that 
        \[
        |G'_{E,p^2}| \ \geq \ |G'_{E,p}|\cdot |\ker(\pi|_{G'})| \ \geq \ (p+1).
        \]
        We can construct a non-trivial commutator element in the kernel of $\pi|_{G'}$ as follows. Let $A_{p^2} = c_1 \bmod p^2$, let $B_{p^2} = c_{\mathrm{ns}}(a,b) \bmod p^2$, and compute $Y_{p^2}$. Take $\kappa = c_{\mathrm{ns}}(1,p) \bmod p^2$ and compute $A_{p^2}'$, $B_{p^2}'$, and $Y_{p^2}'$. Using \verb|Magma| to compute the quotient of the commutators $Y_{p^2}$ and $Y_{p^2}'$, we get $Y_{p^2}/Y_{p^2}' = c_{\mathrm{ns}}(1,-2p/(\delta p^2 - 1)) \bmod p^2 \equiv I \bmod p$. Therefore, $|\ker(\pi|_{G'})| \geq p$ and it follows that $|G'_{E,p^2}| \geq (p+1)\cdot p$.
        
        \underline{Induction:} Suppose that $|G'_{E,p^n}| \geq (p+1)\cdot p^{n-1}$ for all $n\geq 1$. Let $\pi|_{G'} \colon G'_{E,p^{n+1}} \to G'_{E,p^n}$ be the natural reduction map. Since $\pi|_{G'}$ is surjective, it follows that $|G'_{E,p^{n+1}}| \geq (p+1)\cdot p^{n-1}$.
        We can construct a non-trivial commutator element in the kernel of $\pi|_{G'}$ as in the base case. Let $A_{p^{n+1}} = \gamma_1 \bmod p^{n+1}$, let $B_{p^{n+1}} = c_{\mathrm{ns}}(a,b) \bmod p^{n+1}$, and compute $Y_{p^{n+1}}$. Take $\kappa = c_{\mathrm{ns}}(1,p^n) \bmod p^{n+1}$ and compute $A_{p^{n+1}}'$, $B_{p^{n+1}}'$, and $Y_{p^{n+1}}'$. A \verb|Magma| computation verifies that $Y_{p^{n+1}}/Y_{p^{n+1}}' = c_{\mathrm{ns}}(1,-2p^n/(\delta p^{2n}-1)) \bmod p^{n+1} \equiv I \bmod p^n$. Therefore, $|\ker(\pi|_{G'})| \geq p$ and it follows that $|G'_{E,p^{n+1}}| \geq (p+1)\cdot p^{n-1}\cdot p = (p+1)\cdot p^{n}$.

        By induction, we proved that $|G'_{E,p^n}| \geq (p+1)\cdot p^{n-1}$. Therefore, $|G'_{E,p^n}| = (p+1)\cdot p^{n-1}$ for $n\geq 1$, and hence
        \[
            |M_E(p^n)| \ = \ \frac{|G_{E,p^n}|}{|G'_{E,p^n}|} \ = \ \frac{2\cdot p^{2(n-1)}(p^2-1)}{(p+1)\cdot p^{n-1}} \ = \ 2\cdot p^{n-1} \cdot (p-1),
        \]
        which is the degree of $K(\zeta_{p^n})/\Q$. Thus, $M_E(p^n) = K(\zeta_{p^n})$.
    \end{itemize}

    Next, let $E/\Q$ be an elliptic curve with $j(E)=0$. Thus, $E$ has CM by $\mathcal{O}_{K,f}$, where $K=\Q(\sqrt{-3})$, $f=1$, and $\mathcal{O}_{K,f} = \mathcal{O}_K = \Z[(1+\sqrt{-3})/2]$. Let $p>3$ be a prime. Then, the $p$-adic image of $\rho_{E,p^\infty}$ is described by Theorem 1.4 of \cite{lozano-galoiscm} and depends on the class of $p \bmod 9$, so we consider three separate cases, which in turn correspond to parts (1), (2), and (3) of [11, Thm. 1.4]. 

    \begin{enumerate}
        \item[(1)] If $p\equiv \pm 1 \bmod 9$, then $G_{E,p^n} \cong \mathcal{N}_{\delta,0}(p^n)$. 
        \begin{itemize}
            \item If $p \equiv 1 \bmod 9$, then $G_{E,p^n}$ is the normalizer of a split Cartan subgroup, and as in part (i) above, we conclude that $M_E(p^n) = K(\zeta_{p^n})$.
            \item If $p \equiv -1 \bmod 9$, then $G_{E,p^n}$ is the normalizer of a non-split Cartan subgroup, and as in part (ii) above, we conclude that $M_E(p^n) = K(\zeta_{p^n})$.
        \end{itemize}

        \item[(2)] If $p\equiv 2$ or $5 \bmod 9$, then $G_{E,p^n}$ is contained in the normalizer of a non-split Cartan subgroup. 
        \begin{itemize}
            \item If $G_{E,p^n} \cong \mathcal{N}_{\delta,0}(p^n)$, then as in part (ii) above, we conclude that $M_E(p^n) = K(\zeta_{p^n})$.

            \item If $G_{E,p^n} \cong \langle \mathcal{C}_{\delta,0}(p^n)^3, c_\varepsilon \rangle$, then $G_{E,p^n}$ is an index $3$ subgroup of the normalizer and we have the following upper bound on the size of the commutator subgroup,
            \[
                |G'_{E,p^n}| \ \leq \ \frac{|G_{E,p^n}|}{|K(\zeta_{p^n})/\Q|} \ = \ \frac{2\cdot p^{2(n-1)}(p^2-1)/3}{2\cdot p^{n-1}(p-1)} \ = \  p^{n-1}\cdot (p+1)/3. 
            \]
            Repeating the induction steps in part (ii) with $B_{p^{n+1}} = c_{\mathrm{ns}}(a,b)^3 \bmod p^{n+1}$ for $n\geq 1$, we prove that $|G'_{E,p^n}| \geq p^{n-1}\cdot (p+1)/3$. 
            Therefore, we conclude that $|G'_{E,p^n}| = p^{n-1}\cdot (p+1)/3$ for $n\geq 1$, and hence 
            \[
                |M_E(p^n)| \ = \ \frac{|G_{E,p^n}|}{|G'_{E,p^n}|} \ = \ \frac{2\cdot p^{2(n-1)}(p^2-1)/3}{(p+1)\cdot p^{n-1}/3} \ = \ 2\cdot p^{n-1} \cdot (p-1),
            \]
            which is the degree of $K(\zeta_{p^n})/\Q$. Thus, $M_E(p^n) = K(\zeta_{p^n})$.
        \end{itemize}

        \item[(3)] If $p \equiv 4$ or $7 \bmod 9$, then $G_{E,p^n}$ is contained in the normalizer of a split Cartan subgroup. 
        \begin{itemize}
            \item If $G_{E,p^n} \cong \mathcal{N}_{\delta,0}(p^n)$, then as in part (i) above, we conclude that $M_E(p^n) = K(\zeta_{p^n})$.

            \item If 
            \[
                G_{E,p^n} \ \cong \ \left\langle \left\{ \begin{pmatrix}a&0\\0&b\end{pmatrix} : \frac{a}{b} \in {(\Z/p^n\Z)^\times}^3 \right\} , \gamma = \begin{pmatrix}0&\varepsilon\\ \varepsilon&0\end{pmatrix}\right\rangle,
            \]
            then $G_{E,p^n}$ is an index $3$ subgroup of the normalizer and we have the following upper bound on the size of the commutator subgroup,
            \[
                |G'_{E,p^n}| \ \leq \ \frac{|G_{E,p^n}|}{|K(\zeta_{p^n})/\Q|} \ = \ \frac{2\cdot p^{2(n-1)}(p-1)^2/3}{2\cdot p^{n-1}(p-1)} \ = \  p^{n-1}\cdot (p-1)/3. 
            \]
            Repeating the induction steps in part (i) with $B_{p^{n+1}} = c_{\mathrm{sp}}(1,0) \bmod p^{n+1}$ for $n\geq 1$, we prove that $|G'_{E,p^n}| \geq p^{n-1}\cdot (p-1)/3$. 
            Therefore, we conclude that $|G'_{E,p^n}| = p^{n-1}\cdot (p-1)/3$ for $n\geq 1$, and hence 
            \[
                |M_E(p^n)| \ = \ \frac{|G_{E,p^n}|}{|G'_{E,p^n}|} \ = \ \frac{2\cdot p^{2(n-1)}(p-1)^2/3}{(p-1)\cdot p^{n-1}/3} \ = \ 2\cdot p^{n-1} \cdot (p-1),
            \]
            which is the degree of $K(\zeta_{p^n})/\Q$. Thus, $M_E(p^n) = K(\zeta_{p^n})$.
        \end{itemize}         
    \end{enumerate}
 
\end{proof}


\subsection{Odd primes dividing the discriminant}\label{sect-pdivdisc} 

Let $p$ be an odd prime dividing $\Delta_Kf$. In this section, we will prove Theorems \ref{pdivDeltaKf} and \ref{3divDeltaKf}. First, we will prove the case where $p \geq 3$ and $j(E)\neq 0$.

\begin{proof}[Proof of Theorem \ref{pdivDeltaKf}]
    Let $E/\Q$ be an elliptic curve with CM by an order $\mathcal{O}_{K,f}$ such that $j_{K,f} \neq 0$ or $1728$. Let $p\geq 3$ be an odd prime dividing $\Delta_Kf$. Let $G_{E,p^n} = \Gal(\Q(E[p^n])/\Q)$ and let $G'_{E,p^n}$ denote the commutator subgroup of $G_{E,p^n}$. 
    By Lemma \ref{rootsofunity_in_divfld}, it follows that $\Q(\zeta_{p^n}) \subseteq \Q(E[p^n])$. By Lemma \ref{CMfld_in_divfld}, it follows that $K\subseteq \Q(E[p^n])$. Note that $K \subseteq \Q(\zeta_{p^n})$. Therefore, $\Q(\zeta_{p^n})$ is an abelian extension contained in $\Q(E[p^n])$. Thus, an upper bound for the size of the commutator subgroup is, 
    \[
        |G'_{E,p^n}| \ \leq \ \frac{|G_{E,p^n}|}{|\Q(\zeta_{p^n})/\Q|} \ = \ \frac{|G_{E,p^n}|}{p^n - p^{n-1}},
    \]
    where the size of $G_{E,p^n}$ can be derived from Lemma \ref{size_of_cartan}. By Theorem 1.5.(a) of \cite{lozano-galoiscm}, if $j_{K,f} \neq 0,1728$, then either $G_{E,p^\infty}\cong \mathcal{N}_{\delta,0}(p^\infty)$ or $G_{E,p^\infty}$ is the following index $2$ subgroup of the normalizer,
    \[
        G_{E,p^\infty} = \left\langle c_\varepsilon, \left\{\begin{pmatrix}a&b\\\delta b&a\end{pmatrix}: a \in \Z_p^{\times^2}, b \in \Z_p \right\} \right\rangle.
    \]

\begin{itemize}
    \item[(1)] Suppose that $G_{E,p^\infty}$ is an index $2$ subgroup of the normalizer. Then we have the following upper bound on the size of the commutator subgroup,
    \[
        |G'_{E,p^n}| \ \leq \ \frac{|G_{E,p^n}|}{|\Q(\zeta_{p^n})/\Q|} \ = \ \frac{2\cdot p^{2n-1}\cdot (p-1)/2}{p^{n-1}\cdot (p-1)} \ = \ p^n.
    \]
    Using induction and the method outlined in Section \ref{main_method}, we will prove that the bound above is also a lower bound on the size of the commutator subgroup, i.e., $|G'_{E,p^n}| \geq p^n$. 

    \underline{Base case:} When $n=1$, one can compute that $|G'_{E,p^n}| = p$. Let $\pi|_{G'} \colon G'_{E,p^2} \to G'_{E,p}$ be the natural reduction map. By Lemma \ref{pi_comm_surj}, $\pi|_{G'}$ is surjective, so it follows that 
        \[
        |G'_{E,p^2}| \ \geq \ |G'_{E,p}|\cdot |\ker(\pi|_{G'})| \ \geq \ p.
        \]
        We can construct a non-trivial commutator element in the kernel of $\pi|_{G'}$ as follows. Let $A_{p^2} = c_\varepsilon \bmod p^2$, let $B_{p^2} = c_{\delta,0}(1,1) \bmod p^2$ so that $B_{p^2} \in G_{E,p^2}$, and compute $Y_{p^2}$. Take $\kappa = c_{\delta,0}(1,p) \bmod p^2$ and compute $A_{p^2}'$, $B_{p^2}'$, and $Y_{p^2}'$. Using \verb|Magma| to compute the quotient of the commutators $Y_{p^2}$ and $Y_{p^2}'$, we get $Y_{p^2}/Y_{p^2}' = c_{\delta,0}(1,2p) \bmod p^2 \equiv I \bmod p$. Therefore, $|\ker(\pi|_{G'})| \geq p$ and it follows that $|G'_{E,p^2}| \geq p\cdot p = p^2$.

        \underline{Induction:} Suppose that $|G'_{E,p^n}| \geq p^n$ for all $n\geq 1$. Let $\pi|_{G'} \colon G'_{E,p^{n+1}} \to G'_{E,p^n}$ be the natural reduction map. Since $\pi|_{G'}$ is surjective, it follows that $|G'_{E,p^{n+1}}| \geq p^n$.
        We can construct a non-trivial commutator element in the kernel of $\pi|_{G'}$ as in the base case. Let $A_{p^{n+1}} = c_\varepsilon \bmod p^{n+1}$, let $B_{p^{n+1}} = c_{\delta,0}(1,1) \bmod p^{n+1}$, and compute $Y_{p^{n+1}}$. Take $\kappa = c_{\delta,0}(1,p^n) \bmod p^{n+1}$ and compute $A_{p^{n+1}}'$, $B_{p^{n+1}}'$, and $Y_{p^{n+1}}'$. A \verb|Magma| computation verifies that $Y_{p^{n+1}}/Y_{p^{n+1}}' = c_{\delta,0}(1,2p^n) \bmod p^{n+1} \equiv I \bmod p^n$. Therefore, $|\ker(\pi|_{G'})| \geq p$ and it follows that $|G'_{E,p^{n+1}}| \geq p^n\cdot p = p^{n+1}$.

        By induction, we proved that $|G'_{E,p^n}| \geq p^n$. Therefore, $|G'_{E,p^n}| = p^n$ for all $n\geq 1$, and hence 
        \[
            |M_E(p^n)| \ = \ \frac{|G_{E,p^n}|}{|G'_{E,p^n}|} \ = \ \frac{2\cdot p^{2n-1}(p-1)/2}{p^n} \ = \ p^{n-1} \cdot (p-1),
        \]
        which is the degree of $\Q(\zeta_{p^n})/\Q$. Thus, $M_E(p^n) = \Q(\zeta_{p^n})$.

        \item[(2)] Suppose that $G_{E,p^\infty} \cong \mathcal{N}_{\delta,0}(p^\infty)$. By Proposition 1.4 of \cite{silverman1}, if two elliptic curves have the same $j$-invariant (not $0,1728$), then they are quadratic twists of each other. Since $j(E)\neq 0$ or $1728$, it follows that the index $1$ images are quadratic twists of the index $2$ images by some $\alpha \in \Z$ that is unique up to squares and described in Definition \ref{defn-alpha}. Therefore, by Lemma \ref{sqrtd_in_divfld}, it follows that $\Q(\sqrt{\alpha}) \subseteq \Q(E[p^n])$. Thus, $\Q(\zeta_{p^n},\sqrt{\alpha})$ is an abelian extension contained in $\Q(E[p^n])$, and we have the following upper bound on the size of the commutator subgroup, 
        \[
            |G'_{E,p^n}| \ \leq \ \frac{|G_{E,p^n}|}{|\Q(\zeta_{p^n},\sqrt{\alpha})/\Q|} \ = \ \frac{2\cdot p^{2n-1}\cdot (p-1)}{2\cdot p^{n-1}\cdot (p-1)} \ = \ p^n.
        \]
        Repeating the induction steps in part (1), we prove that $|G'_{E,p^n}| \geq p^n$.  
        Therefore, we conclude that $|G'_{E,p^n}| = p^n$, and hence 
        \[
            |M_E(p^n)| \ = \ \frac{|G_{E,p^n}|}{|G'_{E,p^n}|} \ = \ \frac{2\cdot p^{2n-1}(p-1)}{p^n} \ = \ 2\cdot p^{n-1} \cdot (p-1),
        \]
        which is the degree of $\Q(\zeta_{p^n},\sqrt{\alpha})/\Q$. Thus, $M_E(p^n) = \Q(\zeta_{p^n},\sqrt{\alpha})$.
\end{itemize}

\end{proof}

Next, we will prove the case where $p=3$ and $j(E) = 0$. 

\begin{proof}[Proof of Theorem \ref{3divDeltaKf}]
    Let $E/\Q$ be an elliptic curve with CM by an order $\mathcal{O}_{K,f}$. Let $j_{K,f} = 0$, so $\Delta_Kf^2 = -3$, and let $\delta = -3/4$ and $\phi = 0$. 
    Let $\alpha = \alpha(E)$ be as in Definition \ref{defn-alpha}.
    Let $G_{E,3^n} = \Gal(\Q(E[3^n])/\Q)$ and let $G'_{E,3^n}$ denote the commutator subgroup of $G_{E,3^n}$. 
    By Lemma \ref{rootsofunity_in_divfld}, it follows that $\Q(\zeta_{3^n}) \subseteq \Q(E[3^n])$. Note that $K = \Q(\sqrt{-3}) \subseteq \Q(\zeta_{3^n})$. Therefore, $\Q(\zeta_{3^n})$ is an abelian extension contained in $\Q(E[3^n])$. Thus, an upper bound for the size of the commutator subgroup is 
    \[
        |G'_{E,3^n}| \ \leq \ \frac{|G_{E,3^n}|}{|\Q(\zeta_{3^n})/\Q|} \ = \ \frac{|G_{E,3^n}|}{3^n - 3^{n-1}},
    \]
    where the size of $G_{E,3^n}$ can be derived from Lemma \ref{size_of_cartan}. By Theorem 1.5.(b) of \cite{lozano-galoiscm}, either $G_{E,3^\infty} \cong \mathcal{N}_{\delta,0}(3^\infty)$ or $[\mathcal{N}_{\delta,0}(3^\infty) : G_{E,3^\infty}]=2,3,$ or $6$, and the level of definition of the image is at most $27$.

\begin{itemize}
    \item[(1)] Suppose that $[\mathcal{N}_{\delta,0}(3^\infty) : G_{E,3^\infty}] = 2$, where $G_{E,3^\infty}$ is the following index $2$ subgroup, 
    \[
        G_{E,3^\infty} = \left\langle c_\varepsilon, \left\{\begin{pmatrix}a&b\\-3b/4&a\end{pmatrix}: a,b \in \Z_3, a \equiv 1 \bmod 3 \right\} \right\rangle \subseteq \GL(2,\Z_3).
    \]
    Note that by Lemma \ref{alpha-for-p3-j0}, we have that $\sqrt{\alpha} \in \Q(\zeta_{3^n})$. Therefore, $\Q(\zeta_{3^n})$ remains the largest abelian extension that we know is contained in $\Q(E[3^n])$, and 
    we have the following upper bound on the size of the commutator subgroup, 
    \[
        |G'_{E,3^n}| \ \leq \ \frac{|G_{E,3^n}|}{|\Q(\zeta_{3^n})/\Q|} \ = \ \frac{2\cdot 3^{2n-1}\cdot (3-1)/2}{3^{n-1}\cdot (3-1)} \ = \ 3^n.
    \]
    Using induction and the method outlined in Section \ref{main_method}, we will prove that the bound above is also a lower bound on the size of the commutator subgroup, i.e., $|G'_{E,3^n}| \geq 3^n$. 
    
    \underline{Base case:} Since the level of definition of $G_{E,3^n}$ is at most $27$, we will compute $|G'_{E,3^n}|$ for the first three values of $n$. A \verb|Magma| computation shows that $|G'_{E,3}| = 3$, $|G'_{E,9}| = 9$, and $|G'_{E,27}| = 27$.
    Let $\pi|_{G'}:G'_{E,81} \to G'_{E,27}$ be the natural reduction map. By Lemma \ref{pi_comm_surj}, $\pi|_{G'}$ is surjective, so it follows that 
        \[
        |G'_{E,81}| \ \geq \ |G'_{E,27}|\cdot |\ker(\pi|_{G'})| \ \geq \ 27.
        \]
    We can construct a non-trivial commutator element in the kernel of $\pi|_{G'}$ as follows. Let $A_{81} = c_\varepsilon \bmod 81$, let $B_{81} = c_{\delta,0}(1,1) \bmod 81$ so that $B_{81} \in G_{E,81}$, and compute $Y_{81}$. Take $\kappa = c_{\delta,0}(1,27) \bmod 81$ and compute $A_{81}'$, $B_{81}'$, and $Y_{81}'$. Using \verb|Magma| to compute the quotient of the commutators $Y_{81}$ and $Y_{81}'$, we get $Y_{81}/Y_{81}' = c_{\delta,0}(1,54a^2/(a^2 + 3b^2/4)) \bmod 81 \equiv I \bmod 27$. Therefore, $|\ker(\pi|_{G'})| \geq 3$ and it follows that $|G'_{E,81}| \geq 27\cdot 3 = 81$.

    \underline{Induction:} Suppose that $|G'_{E,3^n}| \geq 3^n$ for all $n\geq 1$. Let $\pi|_{G'} \colon G'_{E,3^{n+1}} \to G'_{E,3^n}$ be the natural reduction map. Since $\pi|_{G'}$ is surjective, it follows that $|G'_{E,3^{n+1}}| \geq 3^n$.
    We can construct a non-trivial commutator element in the kernel of $\pi|_{G'}$ as in the base case. Let $A_{3^{n+1}} = c_\varepsilon \bmod 3^{n+1}$, let $B_{3^{n+1}} = c_{\delta,0}(1,1) \bmod 3^{n+1}$, and compute $Y_{3^{n+1}}$. Take $\kappa = c_{\delta,0}(1,3^n) \bmod 3^{n+1}$ and compute $A_{3^{n+1}}'$, $B_{3^{n+1}}'$, and $Y_{3^{n+1}}'$. A \verb|Magma| computation verifies that $Y_{3^{n+1}}/Y_{3^{n+1}}' = c_{\delta,0}(1,2\cdot 3^na^2/(a^2 + 3b^2/4)) \bmod 3^{n+1} \equiv I \bmod 3^n$. Therefore, $|\ker(\pi|_{G'})| \geq 3$ and it follows that $|G'_{E,3^{n+1}}| \geq 3^n\cdot 3 = 3^{n+1}$.

    By induction, we proved that $|G'_{E,3^n}| \geq 3^n$. Therefore, $|G'_{E,3^n}| = 3^n$ for all $n\geq 1$, and hence
        \[
            |M_E(3^n)| \ = \ \frac{|G_{E,3^n}|}{|G'_{E,3^n}|} \ = \ \frac{2\cdot 3^{2n-1}(3-1)/2}{3^n} \ = \ 3^{n-1} \cdot (3-1),
        \]
    which is the degree of $\Q(\zeta_{3^n})/\Q$. Thus, $M_E(3^n) = \Q(\zeta_{3^n})$.

    \item[(2)] Suppose that $G_{E,3^\infty} \cong \mathcal{N}_{\delta,0}(3^\infty)$. By Lemma \ref{alpha-for-p3-j0}, if $[\mathcal{N}_{\delta,0}(3^n):G_{E,3^n}]=1$, then $\Q(\sqrt{\alpha}) \subseteq \Q(E[3^n])$. Thus, $\Q(\zeta_{3^n},\sqrt{\alpha})$ is an abelian extension contained in $\Q(E[3^n])$, and we have the following upper bound on the size of the commutator subgroup, 
        \[
            |G'_{E,3^n}| \ \leq \ \frac{|G_{E,3^n}|}{|\Q(\zeta_{3^n},\sqrt{\alpha})/\Q|} \ = \ \frac{2\cdot 3^{2n-1}\cdot (3-1)}{2\cdot 3^{n-1}\cdot (3-1)} \ = \ 3^{n}.
        \]
    Repeating the induction steps in part (1), we prove that $|G'_{E,3^n}| \geq 3^{n}$.  
    Therefore, we conclude that $|G'_{E,3^n}| = 3^{n}$, and hence 
        \[
            |M_E(3^n)| \ = \ \frac{|G_{E,3^n}|}{|G'_{E,3^n}|} \ = \ \frac{2\cdot 3^{2n-1}(3-1)}{3^n} \ = \ 2\cdot 3^{n-1} \cdot (3-1),
        \]
    which is the degree of $\Q(\zeta_{3^n},\sqrt{\alpha})/\Q$. Thus, $M_E(3^n) = \Q(\zeta_{3^n},\sqrt{\alpha})$.

    \item[(3)] Suppose that $[\mathcal{N}_{\delta,0}(3^\infty) : G_{E,3^\infty}] = 6$, where $G_{E,3^\infty}$ is one of the following index $6$ subgroups, 
        \begin{align*}
             G_{E,3^\infty}^1 = \left\langle c_\varepsilon, \left\{\begin{pmatrix}a&b\\-3b/4&a\end{pmatrix}: a \equiv 1, b \equiv 0 \bmod 3\Z_3 \right\} \right\rangle, 
        \end{align*}
        \begin{align*}
             G_{E,3^\infty}^2 = \left\langle c_\varepsilon, \begin{pmatrix}4&0\\0&4\end{pmatrix}, \begin{pmatrix}1&1\\-3/4&1\end{pmatrix}\right\rangle, \quad \text{or} \quad G_{E,3^\infty}^3 = \left\langle c_\varepsilon, \begin{pmatrix}4&0\\0&4\end{pmatrix}, \begin{pmatrix}-5/4&1/2\\-3/8&-5/4\end{pmatrix}\right\rangle.
        \end{align*}
    Note that by Lemma \ref{alpha-for-p3-j0}, we have that $\sqrt{\alpha} \in \Q(\zeta_{3^n})$. Therefore, $\Q(\zeta_{3^n})$ remains the largest abelian extension that we know is contained in $\Q(E[3^n])$, and we have the following upper bound on the size of the commutator subgroup, 
        \[
            |G'_{E,3^n}| \ \leq \ \frac{|G_{E,3^n}|}{|\Q(\zeta_{3^n})/\Q|} \ = \ \frac{2\cdot 3^{2n-1}\cdot (3-1)/6}{3^{n-1}\cdot (3-1)} \ = \ 3^{n-1}.
        \]
    Since the level of definition of $G_{E,3^n}$ is at most $27$, we will compute $|G'_{E,3^n}|$ for the first three values of $n$. A \verb|Magma| computation shows that $|(G^1_{E,3})'| = 1$ and $|(G^2_{E,3})'| = |(G^3_{E,3})'| =3$, and for all images $|G'_{E,9}| = 9$ and $|G'_{E,27}| = 27$.
    Repeating the induction steps in part (1) using $B_{3^{n+1}} = c_{\delta,0}(1,0) \bmod 3^{n+1}$ for $G_{E,3^{n+1}}^1$, $B_{3^{n+1}} = c_{\delta,0}(1,1) \bmod 3^{n+1}$ for $G_{E,3^{n+1}}^2$, and $B_{3^{n+1}} = c_{\delta,0}(-5/4,1/2) \bmod 3^{n+1}$ for $G_{E,3^{n+1}}^3$ for $n\geq 2$, we prove that $|G'_{E,3^n}| \geq 3^{n-1}$. 
    Therefore, we conclude that $|G'_{E,3^n}| = 3^{n-1}$ for $n\geq 2$, and hence 
        \[
            |M_E(3^n)| \ = \ \frac{|G_{E,3^n}|}{|G'_{E,3^n}|} \ = \ \frac{2\cdot 3^{2n-1}(3-1)/6}{3^{n-1}} \ = \ 3^{n-1} \cdot (3-1),
        \]
    which is the degree of $\Q(\zeta_{3^n})/\Q$. Thus, $M_E(3^n) = \Q(\zeta_{3^n})$.

    \item[(4)] Suppose that $[\mathcal{N}_{\delta,0}(p^n):G_{E,3^n}]=3$, where $G_{E,3^\infty}$ is one of the following index $3$ subgroups, 
        \begin{align*}
             G_{E,3^\infty}^1 = \left\langle c_\varepsilon, \left\{\begin{pmatrix}a&b\\-3b/4&a\end{pmatrix}: a \in \Z_3^\times, b \equiv 0 \bmod 3 \right\} \right\rangle, 
        \end{align*}
        \begin{align*}
             G_{E,3^\infty}^2 = \left\langle c_\varepsilon, \begin{pmatrix}2&0\\0&2\end{pmatrix}, \begin{pmatrix}1&1\\-3/4&1\end{pmatrix}\right\rangle, \quad \text{or} \quad G_{E,3^\infty}^3 = \left\langle c_\varepsilon, \begin{pmatrix}2&0\\0&2\end{pmatrix}, \begin{pmatrix}-5/4&1/2\\-3/8&-5/4\end{pmatrix}\right\rangle.
        \end{align*}
        By Lemma \ref{alpha-for-p3-j0}, if $[\mathcal{N}_{\delta,0}(p^n):G_{E,3^n}]=3$, then $\Q(\sqrt{\alpha}) \subseteq \Q(E[3^n])$. Thus, $\Q(\zeta_{3^n},\sqrt{\alpha})$ is an abelian extension contained in $\Q(E[3^n])$, and we have the following upper bound on the size of the commutator subgroup, 
        \[
            |G'_{E,3^n}| \ \leq \ \frac{|G_{E,3^n}|}{|\Q(\zeta_{3^n},\sqrt{\alpha})/\Q|} \ = \ \frac{2\cdot 3^{2n-1}\cdot (3-1)/3}{2\cdot 3^{n-1}\cdot (3-1)} \ = \ 3^{n-1}.
        \]
    Since the level of definition of $G_{E,3^n}$ is at most $27$, we will compute $|G'_{E,3^n}|$ for the first three values of $n$. A \verb|Magma| computation shows that $|(G^1_{E,3})'| = 1$ and $|(G^2_{E,3})'| = |(G^3_{E,3})'| =3$, and for all images $|G'_{E,9}| = 9$ and $|G'_{E,27}| = 27$.
    Repeating the induction steps in part (3), we prove that $|G'_{E,3^n}| \geq 3^{n-1}$. 
    Therefore, we conclude that $|G'_{E,3^n}| = 3^{n-1}$ for $n\geq 2$, and hence 
        \[
            |M_E(3^n)| \ = \ \frac{|G_{E,3^n}|}{|G'_{E,3^n}|} \ = \ \frac{2\cdot 3^{2n-1}(3-1)/3}{3^{n-1}} \ = \ 2\cdot 3^{n-1} \cdot (3-1),
        \]
    which is the degree of $\Q(\zeta_{3^n},\sqrt{\alpha})/\Q$. Thus, $M_E(3^n) = \Q(\zeta_{3^n},\sqrt{\alpha})$.
    
\end{itemize}

\end{proof}


\subsection{$p=2$ cases}\label{sect-casep2}

In this section, we will prove Theorem \ref{casep2}. We begin by proving a lemma that will be used in the following proposition. 

\begin{lemma}\label{conjugates}
    Let $E/\Q$ be an elliptic curve with CM by $\mathcal{O}_{K,f}$, $f\geq 1$, such that $\Delta_Kf^2 \neq -3$. Suppose $2$ is inert in $K$ and does not divide $\Delta_Kf^2$. Then the images $G_{E,2^\infty}$ are all conjugate to each other in $\GL(2,\Z_2)$. 
\end{lemma}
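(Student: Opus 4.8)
The plan is to reduce the statement to the known classification of $2$-adic images of CM elliptic curves over $\Q$, together with the behaviour of those images under quadratic twist.

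First I would unwind the hypotheses. Since $2$ is inert in $K$ it is in particular unramified there, so $\Delta_K$ is odd (hence $\Delta_K\neq -4$) and $2\nmid f$; combined with $\Delta_Kf^2\neq -3$ this excludes the only two orders with extra units, namely $\mathcal{O}_{-4,1}$ and $\mathcal{O}_{-3,1}$, so $\mathcal{O}_{K,f}^\times=\{\pm1\}$. By Theorem~\ref{gal-repns-att-to-ec-w-cm}(2), after choosing compatible bases we may assume $G_{E,2^\infty}\subseteq \mathcal{N}_{\delta,\phi}(2^\infty)$ with $[\mathcal{N}_{\delta,\phi}(2^\infty):G_{E,2^\infty}]$ a divisor of $|\mathcal{O}_{K,f}^\times|=2$, and by Lemma~\ref{size_of_cartan} the reduction $G_{E,2}$ is forced to equal $\mathcal{N}_{\delta,\phi}(2)$, a group of order $6$. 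The same exclusions give $j(E)=j_{K,f}\neq 0,1728$, so by Proposition~1.4 of \cite{silverman1} any two elliptic curves over $\Q$ with CM by $\mathcal{O}_{K,f}$ are quadratic twists of one another; I would fix one such curve $E_0$ (say $E_{\Delta_K,f}$ from Table~\ref{ell_curves}), so that every $E$ in the statement is $E_0^d$ for a squarefree $d\in\Z$.

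The conceptual heart is that $-\mathrm{Id}=c_{\delta,\phi}(-1,0)$ already lies in $\mathcal{C}_{\delta,\phi}(2^\infty)\subseteq\mathcal{N}_{\delta,\phi}(2^\infty)$ and reduces to the identity modulo $2$. By Lemma~\ref{sqrtd_in_divfld}, for squarefree $d$ with $\sqrt{d}\notin\Q(E_0[2^\infty])$ the image $G_{E_0^d,2^\infty}$ is conjugate to $\langle -\mathrm{Id},G_{E_0,2^\infty}\rangle$, which equals $G_{E_0,2^\infty}$ once we know $-\mathrm{Id}\in G_{E_0,2^\infty}$. The only $d$ not handled this way are the finitely many for which $\Q(\sqrt d)$ is one of the finitely many quadratic subfields of $\Q(E_0[2^\infty])$; for those Lemma~\ref{sqrtd_in_divfld} only gives $\Q(E_0^d[2^\infty])\subseteq\Q(E_0[2^\infty])$, so a priori the index might drop, and the twist argument by itself does not close the proof.

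Instead, the decisive step — which is where the real work is — is to invoke the explicit classification of $2$-adic images of elliptic curves over $\Q(j_{K,f})$ with CM by $\mathcal{O}_{K,f}$ from \cite{lozano-galoiscm}. In the present case ($2$ inert in $K$, $\Delta_Kf^2\neq -3$, equivalently $\mathcal{O}_{K,f}^\times=\{\pm1\}$) that classification shows $G_{E,2^\infty}$ is conjugate to the full normalizer $\mathcal{N}_{\delta,\phi}(2^\infty)$ for \emph{every} such $E$; in particular $-\mathrm{Id}\in G_{E_0,2^\infty}$, so the finitely many exceptional twists above also have image $\mathcal{N}_{\delta,\phi}(2^\infty)$, and hence all the groups $G_{E,2^\infty}$ are conjugate to $\mathcal{N}_{\delta,\phi}(2^\infty)$ in $\GL(2,\Z_2)$. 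I expect this classification input — in effect, checking that in this case no quadratic twist cuts the $2$-adic image below the full normalizer — to be the main obstacle; heuristically it holds because the mod-$2$ image is already the entire order-$6$ group $\mathcal{N}_{\delta,\phi}(2)$ and the single available twisting unit $-1$ is trivial modulo $2$, so there is no room to shrink the image at higher levels.
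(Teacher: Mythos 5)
Your proof establishes something much weaker than what the lemma asserts, because you have misread the quantification. The lemma does not fix the order $\mathcal{O}_{K,f}$: it ranges over \emph{all} $E/\Q$ with CM by \emph{any} order $\mathcal{O}_{K,f}$ satisfying the hypotheses, and the hypotheses ($2$ inert, $2\nmid\Delta_Kf^2$, $\Delta_Kf^2\neq -3$) force $\Delta_Kf^2$ to lie in the set $\{-11,-19,-27,-43,-67,-163\}$. These give \emph{different} parameters $(\delta,\phi)$ — e.g.\ $\delta=-3,\phi=1$ for $-11$; $\delta=-9,\phi=3$ for $-27$; $\delta=-41,\phi=1$ for $-163$ — and hence a priori different normalizers $\mathcal{N}_{\delta,\phi}(2^\infty)\subseteq\GL(2,\Z_2)$. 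The nontrivial content of the lemma is precisely that these normalizers are all conjugate to one another in $\GL(2,\Z_2)$; this is exactly how the lemma is invoked later (to let the paper do the inductive computation for the single discriminant $-11$ and deduce the result for the other five). You can see this in the statement itself: since the classification already tells you $G_{E,2^\infty}\cong\mathcal{N}_{\delta,\phi}(2^\infty)$ for every such $E$, the assertion would be vacuous if $K$ and $f$ were fixed — the images would literally be equal, not merely conjugate.

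Once one fixes $K,f$, your twist-plus-classification argument is correct but is not where the work lies; in fact the twist discussion is superfluous once you have invoked the classification, since $G_{E,2^\infty}\cong\mathcal{N}_{\delta,\phi}(2^\infty)$ for every $E$ with that CM order, twist or not. What is missing is the cross-discriminant conjugacy. The paper proves this by observing (via Proposition 12.1.4 of \cite{elladic}) that all these images have level of definition $16$, so it suffices to verify conjugacy of the six groups $\mathcal{N}_{\delta,\phi}(16)$ inside $\GL(2,\Z/16\Z)$, which is a finite \verb|Magma| computation. A more conceptual route is also available — since $2\nmid f$ and $2$ is inert, each $\mathcal{O}_{K,f}\otimes\Z_2$ is isomorphic to the ring of integers of the unramified quadratic extension of $\Q_2$, so the associated Cartan and its normalizer are determined up to $\GL(2,\Z_2)$-conjugacy independently of which discriminant in the list one takes — but either way this step is the actual content, and your proposal does not contain it.
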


\begin{proof} 
    Let $E/\Q$ be an elliptic curve with CM by $\mathcal{O}_{K,f}$, $f\geq 1$, such that $\Delta_Kf^2 \neq 3$. 
    Since $2$ is inert in $K$ and does not divide $\Delta_Kf$, it follows that $\Delta_Kf^2 \in \{-11, -19, -27, -43, -67,-163\}$ and $G_{E,2^\infty} \cong \mathcal{N}_{\delta,\phi}(2^\infty)$. 
    By Proposition 12.1.4 of \cite{elladic}, it follows that the level of definition of all $2$-adic images is $16$.
    Thus, it suffices to check that for each pair of $\Delta_Kf^2$ in the list, the corresponding $\mathcal{N}_{\delta,\phi}(2^n)$ are both conjugate to each other in $\GL(2,\Z/16\Z)$. A finite computation in \cite{GitHubPaperCode} confirms that the $\mathcal{N}_{\delta,\phi}(2^n)$ are all conjugate to each other in $\GL(2,\Z/16\Z)$, and thus in $\GL(2,\Z/2^n\Z)$ since $16$ is the level of definition of $\mathcal{N}_{\delta,\phi}(2^n)$.
\end{proof}

Suppose $2$ does not divide $\Delta_Kf$. We begin by considering the case where $j(E) \neq 0$.

\begin{proposition}\label{2-ndiv-disc}
    Let $E/\Q$ be an elliptic curve with CM by an order in an imaginary field $K$. Suppose $2$ does not divide $\Delta_Kf$ and that $\Delta_Kf^2 \neq -3$. Then $M_E(2^n) = K(\zeta_{2^n})$ for $n\geq 1$.
\end{proposition}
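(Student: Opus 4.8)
The plan is to mimic the proof of Theorem~\ref{pndivDeltaKf}, now with $p=2$. First I would record that the hypotheses force $j(E)\neq 0,1728$: one has $j(E)=0$ iff $\Delta_Kf^2=-3$ (excluded) and $j(E)=1728$ iff $\Delta_Kf^2\in\{-4,-16\}$, both even (excluded by $2\nmid\Delta_Kf$). Hence $2$ is unramified in $K$ — split exactly when $\Delta_Kf^2=-7$, and inert when $\Delta_Kf^2\in\{-11,-19,-27,-43,-67,-163\}$. In the inert case Lemma~\ref{conjugates} shows the images $G_{E,2^\infty}$ are all conjugate in $\GL(2,\Z_2)$, and together with the classification of $2$-adic images of CM elliptic curves (see \cite{lozano-galoiscm} and Section~\ref{classification-of-images}) this gives $G_{E,2^\infty}\cong\mathcal{N}_{\delta,\phi}(2^\infty)$ in every case, with level of definition at most $16$ by \cite[Prop.~12.1.4]{elladic}. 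In particular the index-$2$ subgroup of $\mathcal{N}_{\delta,\phi}(2^\infty)$ never occurs here, which is what prevents an extra quadratic field $\Q(\sqrt{\alpha})$ (cf.\ Lemma~\ref{sqrtd_in_divfld}) from appearing inside $M_E(2^n)$.

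Next I would extract the upper bound on the commutator subgroup. By Lemma~\ref{rootsofunity_in_divfld}, $\Q(\zeta_{2^n})\subseteq\Q(E[2^n])$, and for $n\geq 2$ Lemma~\ref{CMfld_in_divfld} gives $K\subseteq\Q(E[2^n])$; since $\Delta_K$ is odd, $K\not\subseteq\Q(\zeta_{2^n})$, so $K(\zeta_{2^n})$ is an abelian subextension of $\Q(E[2^n])/\Q$ of degree $2\cdot[\Q(\zeta_{2^n}):\Q]=2^n$. Therefore
\[
    |G'_{E,2^n}|\ \leq\ \frac{|G_{E,2^n}|}{[K(\zeta_{2^n}):\Q]}\ =\ \frac{|\mathcal{N}_{\delta,\phi}(2^n)|}{2^n},
\]
which by Lemma~\ref{size_of_cartan} equals $2^{n-1}$ when $2$ splits and $3\cdot 2^{n-1}$ when $2$ is inert. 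The value $n=1$ is not covered by Lemma~\ref{CMfld_in_divfld}: there I would argue directly that, since $\Gal(\Q(E[2])/\Q)=\mathcal{N}_{\delta,\phi}(2)$ is either $S_3$ (inert case) or of order $2$ (split case), $M_E(2)=\Q(\sqrt{\operatorname{disc}(E)})$, and for a curve with CM by $\mathcal{O}_{K,f}$ one has $\operatorname{disc}(E)\equiv\Delta_Kf^2$ modulo squares (a twist-invariant check on Table~\ref{ell_curves}), so $\Q(\sqrt{\operatorname{disc}(E)})=K=K(\zeta_2)$.

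Then I would prove the matching lower bound by the method of Section~\ref{main_method}. The base cases $n=1,2,3,4$ — up to the level of definition — are a finite \verb|Magma| computation (reduced by Lemma~\ref{conjugates} to a single representative per split/inert type) showing $|G'_{E,2^n}|$ already equals the upper bound. For $n\geq 4$ the induction runs as follows: by Lemma~\ref{pi_comm_surj} the map $\pi|_{G'}\colon G'_{E,2^{n+1}}\to G'_{E,2^n}$ is surjective, so $|G'_{E,2^{n+1}}|\geq|G'_{E,2^n}|\cdot|\ker(\pi|_{G'})|$; past the level of definition $\ker(\pi|_G)$ is the explicit $2$-group of Lemmas~\ref{ker_of_N} and~\ref{ker_of_G_equals_N}, so taking $A_{2^{n+1}}=c_\varepsilon$, $B_{2^{n+1}}=c_{\delta,\phi}(1,1)$ and a suitable $\kappa=c_{\delta,\phi}(1,2^n)$, the quotient of commutators $Y_{2^{n+1}}/Y_{2^{n+1}}'$ is a non-trivial element of $\ker(\pi|_{G'})$ (a \verb|Magma| check, exactly as in the proof of Theorem~\ref{pdivDeltaKf}). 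Hence $|\ker(\pi|_{G'})|\geq 2$ and $|G'_{E,2^{n+1}}|=2|G'_{E,2^n}|$, closing the induction. It follows that $|M_E(2^n)|=|G_{E,2^n}|/|G'_{E,2^n}|=2^n=[K(\zeta_{2^n}):\Q]$, and since $K(\zeta_{2^n})\subseteq M_E(2^n)$ we conclude $M_E(2^n)=K(\zeta_{2^n})$.

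The step I expect to be the main obstacle is the input that $G_{E,2^\infty}$ is the \emph{full} normalizer for every curve in the family, quadratic twists included: this is precisely the role of Lemma~\ref{conjugates} in the inert case (and of the classification in the split case $\Delta_Kf^2=-7$), since an index-$2$ image would, via Lemma~\ref{sqrtd_in_divfld}, drag an extra $\Q(\sqrt{\alpha})$ into $M_E(2^n)$ and break the statement. A secondary nuisance is that, because the level of definition for $p=2$ is $16$ rather than the prime itself, the induction must be seeded by the base cases $n=1,\dots,4$ instead of the single case $n=1$ that sufficed for odd primes in Theorem~\ref{pndivDeltaKf}.
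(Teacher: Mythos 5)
Your proposal follows essentially the same approach as the paper's proof: invoke the classification (Theorem 1.6 of \cite{lozano-galoiscm}) to get $G_{E,2^\infty}\cong\mathcal{N}_{\delta,\phi}(2^\infty)$, use Lemma~\ref{conjugates} to reduce the inert case to a single representative, bound $|G'_{E,2^n}|$ above by $[K(\zeta_{2^n}):\Q]$ and match it below by a \verb|Magma|-seeded induction past the level of definition $16$. Your explicit discriminant argument for $n=1$ (Lemma~\ref{CMfld_in_divfld} only applies for $N\geq 3$) actually fills a small gap the paper leaves implicit; the only slip is that $j=1728$ corresponds to $\Delta_Kf^2=-4$ alone, not also $-16$, though since both are even this does not affect your argument.
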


\begin{proof}
    Let $E/\Q$ be an elliptic curve with CM by an order $\mathcal{O}_{K,f}$, $f\geq 1$, such that $\Delta_Kf^2 \neq -3$. Suppose that $2$ does not divide $\Delta_Kf$. Let $G_{E,2^n} = \Gal(\Q(E[2^n])/\Q)$ and let $G'_{E,2^n}$ denote the commutator subgroup of $G_{E,2^n}$. By Lemma \ref{rootsofunity_in_divfld}, it follows that $\Q(\zeta_{2^n}) \subseteq \Q(E[2^n])$. 
    By Lemma \ref{CMfld_in_divfld}, it follows that $K\subseteq \Q(E[2^n])$ for $n\geq 2$. Therefore, $K(\zeta_{2^n})$ is an abelian extension contained in $\Q(E[2^n])$. Thus, an upper bound for the size of the commutator subgroup is, 
    \[
    |G'_{E,2^n}| \ \leq \ \frac{|G_{E,2^n}|}{|K(\zeta_{2^n})/\Q|} \ = \ \frac{|G_{E,2^n}|}{2\cdot (2^n - 2^{n-1})}, 
    \]
    where the size of $G_{E,2^n}$ can be derived from Lemma \ref{size_of_cartan}. By Theorem 1.6 of \cite{lozano-galoiscm}, since $j_{K,f} \neq 0$, it follows that $G_{E,2^n} \cong \mathcal{N}_{\delta,\phi}(2^n)$, and the level of definition of the image is at most $16$. 

    \begin{itemize}
        \item[(1)] If $G_{E,2^n}$ is the normalizer of a split Cartan subgroup, then we have the following upper bound on the size of the commutator subgroup,
        \[
            |G'_{E,2^n}| \ \leq \ \frac{|G_{E,2^n}|}{|K(\zeta_{2^n})/\Q|} \ = \ \frac{2\cdot 2^{2(n-1)}(2-1)^2}{2^n} \ = \ 2^{n-1}. 
        \]
        Using induction and the method outlined in Section \ref{main_method}, we will prove that the bound above is also a lower bound on the size of the commutator subgroup, i.e., $|G'_{E,2^n}| \geq 2^{n-1}$. Note that the only discriminant that falls under this category is $\Delta_Kf^2 = -7$. We will prove the result for $\Delta_Kf^2 = -7$, i.e., $\delta = -2$ and $\phi = 1$. 

        \underline{Base case:} Since the level of definition of $G_{E,2^n}$ is at most $16$, we will compute $|G'_{E,2^n}|$ for the first four values of $n$. 
        A \verb|Magma| computation shows that $|G'_{E,2}| = 1$, $|G'_{E,4}| = 2$, $|G'_{E,8}| = 4$, and $|G'_{E,16}| = 8$. 
        Let $\pi|_{G'} \colon G'_{E,32} \to G'_{E,16}$ be the natural reduction map. By Lemma \ref{pi_comm_surj}, $\pi|_{G'}$ is surjective, so it follows that 
        \[
            |G'_{E,32}| \ \geq \ |G'_{E,16}|\cdot |\ker(\pi|_{G'})| \ \geq \ 8.
        \]
        We can construct a non-trivial commutator element in the kernel of $\pi|_{G'}$ as follows. Let $A_{32} = c_\varepsilon \bmod 32$, let $B_{32} = c_{-2,1}(1,1) \bmod 32$, and compute $Y_{32}$. Take $\kappa = c_{-2,1}(17,16) \bmod 32$ and compute $A_{32}'$, $B_{32}'$, and $Y_{32}'$. Using \verb|Magma| to compute the quotient of the commutators $Y_{32}$ and $Y_{32}'$, we get $Y_{32}/Y_{32}' = c_{\delta,1}(17,16) \bmod 32 \equiv I \bmod 16$. Therefore, $|\ker(\pi|_{G'})| \geq 2$ and it follows that $|G'_{E,32}| \geq 8\cdot 2 = 16$.

        \underline{Induction:} Suppose that $|G'_{E,2^n}| \geq 2^{n-1}$ for all $n\geq 1$. Let $\pi|_{G'} \colon G'_{E,2^{n+1}} \to G'_{E,2^n}$ be the natural reduction map. Since $\pi|_{G'}$ is surjective, it follows that $|G'_{E,2^{n+1}}| \geq 2^{n-1}$. We can construct a non-trivial commutator element in the kernel of $\pi|_{G'}$ as in the base case. Let $A_{2^{n+1}} = c_\varepsilon \bmod 2^{n+1}$, let $B_{2^{n+1}} = c_{-2,1}(1,1) \bmod 2^{n+1}$, and compute $Y_{2^{n+1}}$. Take $\kappa = c_{-1,1}(1+2^n,2^n) \bmod 2^{n+1}$ and compute $A_{2^{n+1}}'$, $B_{2^{n+1}}'$, and $Y_{2^{n+1}}'$. A \verb|Magma| computation verifies that $Y_{2^{n+1}}/Y_{2^{n+1}}' = c_{-1,1}(1 + 2^n, 2^n/(1+2^n)) \bmod 2^{n+1} \equiv I \bmod 2^n$. Therefore, $|\ker(\pi|_{G'})| \geq 2$ and it follows that $|G'_{E,2^{n+1}}| \geq 2^{n-1}\cdot 2 = 2^{n}$.

        By induction, we proved that $|G'_{E,2^n}| \geq 2^{n-1}$. Therefore, $|G'_{E,2^n}| = 2^{n-1}$ for $n\geq 1$, and hence 
        \[
            |M_E(2^n)| \ = \ \frac{|G_{E,2^n}|}{|G'_{E,2^n}|} \ = \ \frac{2\cdot 2^{2(n-1)}(2-1)^2}{2^{n-1}} \ = \ 2^{n},
        \]
        which is the degree of $K(\zeta_{2^n})/\Q$. Thus, $M_E(2^n) = K(\zeta_{2^n})$.

        \item[(2)] If $G_{E,2^n}$ is the normalizer of a non-split Cartan subgroup, then we have the following upper bound on the size of the commutator subgroup, 
        \[
            |G'_{E,2^n}| \ \leq \ \frac{|G_{E,2^n}|}{|K(\zeta_{2^n})/\Q|} \ = \ \frac{2\cdot 2^{2(n-1)}(2^2-1)}{2^n} \ = \ 2^{n-1}\cdot 3. 
        \]
        Using induction and the method outlined in Section \ref{main_method}, we will prove that the bound above is also a lower bound on the size of the commutator subgroup, i.e., $|G'_{E,2^n}| \geq 2^{n-1}\cdot 3$. First, note that the discriminants that fall under this category are $\Delta_Kf^2 = -11, -19, -27, -43, -67,$ and $-163$. By Lemma \ref{conjugates}, all $2$-adic images for these discriminants are conjugates of each other in $\GL(2,\Z_2)$. 
        Therefore, without loss of generality, it suffices to prove the result for a choice of $\Delta_Kf^2$. 
        We prove the result for $\Delta_Kf^2 = -11$, i.e., $\delta = -3$ and $\phi = 1$.

        \underline{Base case:} Since the level of definition of $G_{E,2^n}$ is at most $16$, we will compute $|G'_{E,2^n}|$ for the first four values of $n$. A \verb|Magma| computation shows that $|G'_{E,2}| = 3$, $|G'_{E,4}| = 6$, $|G'_{E,8}| = 12$, and $|G'_{E,16}| = 24$. 
        Let $\pi|_{G'} \colon G'_{E,32} \to G'_{E,16}$ be the natural reduction map. Since $\pi|_{G'}$ is surjective, it follows that 
        \[
            |G'_{E,32}| \ \geq \ |G'_{E,16}|\cdot |\ker(\pi|_{G'})| \ \geq \ 24.
        \]
        We can construct a non-trivial commutator element in the kernel of $\pi|_{G'}$ as follows. Let $A_{32} = c_\varepsilon \bmod 32$, let $B_{32} = c_{-3,1}(1,0) \bmod 32$, and compute $Y_{32}$. Take $\kappa = c_{-3,1}(17,16) \bmod 32$ and compute $A_{32}'$, $B_{32}'$, and $Y_{32}'$. Using \verb|Magma| to compute the quotient of the commutators $Y_{32}$ and $Y_{32}'$, we get $Y_{32}/Y_{32}' = c_{\delta,1}(17,0) \bmod 32 \equiv I \bmod 16$. Therefore, $|\ker(\pi|_{G'})| \geq 2$ and it follows that $|G'_{E,32}| \geq 12\cdot 2 = 24$.

        \underline{Induction:} Suppose that $|G'_{E,2^n}| \geq 2^{n-1}\cdot 3$ for all $n\geq 1$. Let $\pi|_{G'} \colon G'_{E,2^{n+1}} \to G'_{E,2^n}$ be the natural reduction map. Since $\pi|_{G'}$ is surjective, it follows that $|G'_{E,2^{n+1}}| \geq 2^{n-1}\cdot 3$. We can construct a non-trivial commutator element in the kernel of $\pi|_{G'}$ as in the base case. Let $A_{2^{n+1}} = c_\varepsilon \bmod 2^{n+1}$, let $B_{2^{n+1}} = c_{-3,1}(1,0) \bmod 2^{n+1}$, and compute $Y_{2^{n+1}}$. Take $\kappa = c_{-3,1}(1+2^n,2^n) \bmod 2^{n+1}$ and compute $A_{2^{n+1}}'$, $B_{2^{n+1}}'$, and $Y_{2^{n+1}}'$. A \verb|Magma| computation verifies that $Y_{2^{n+1}}/Y_{2^{n+1}}' = c_{-3,1}(1 + 2^n, 0) \bmod 2^{n+1} \equiv I \bmod 2^n$. Therefore, $|\ker(\pi|_{G'})| \geq 2$ and it follows that $|G'_{E,2^{n+1}}| \geq 2^{n-1}\cdot 3 \cdot 2 = 2^{n}\cdot 3$.

        By induction, we proved that $|G'_{E,2^n}| \geq 2^{n-1}\cdot 3$. Therefore, $|G'_{E,2^n}| = 2^{n-1}\cdot 3$ for $n\geq 1$, and hence
        \[
            |M_E(2^n)| \ = \ \frac{|G_{E,2^n}|}{|G'_{E,2^n}|} \ = \ \frac{2\cdot 2^{2(n-1)}(2^2-1)}{2^{n-1}\cdot 3} \ = \ 2^n,
        \]
        which is the degree of $K(\zeta_{2^n})/\Q$. Thus, $M_E(2^n) = K(\zeta_{2^n})$.
    \end{itemize}

\end{proof}

Next, we consider the case where $2$ does not divide $\Delta_Kf$ and $j(E) = 0$.

\begin{proposition}\label{2-ndiv-disc-j0}
    Let $E/\Q$ be an elliptic curve with $j(E) = 0$. Then $M_E(2^n) = K(\zeta_{2^n})$ for $n\geq 1$.
\end{proposition}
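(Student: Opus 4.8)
The plan is to run the same commutator argument as in Proposition \ref{2-ndiv-disc}, now specialized to $j(E)=0$. First I would fix the arithmetic: $j(E)=0$ forces $E$ to have CM by the maximal order of $K=\Q(\sqrt{-3})$, so $f=1$ and $\Delta_Kf^2=-3$, and since we work at even level $2^n$ we have $\delta=-1$, $\phi=1$; as $-3\equiv 5\bmod 8$, the prime $2$ is inert in $K$ and $2\nmid\Delta_Kf$, so Lemma \ref{size_of_cartan}.(3) gives $|\mathcal{C}_{-1,1}(2^n)|=3\cdot 2^{2(n-1)}$ and $|\mathcal{N}_{-1,1}(2^n)|=3\cdot 2^{2n-1}$. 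Next I would pin down the abelian subfield we already control: writing a model $y^2=x^3+B$ for $E$, the three nonzero $2$-torsion points have $x$-coordinates the cube roots of $-B$, so $\zeta_3\in\Q(E[2])$ and hence $K\subseteq\Q(E[2^n])$ for all $n\geq 1$; combined with $\Q(\zeta_{2^n})\subseteq\Q(E[2^n])$ (Lemma \ref{rootsofunity_in_divfld}) and $K\not\subseteq\Q(\zeta_{2^n})$, the field $K(\zeta_{2^n})$ is abelian over $\Q$, lies in $\Q(E[2^n])$, and has degree exactly $2^n$. Since $\Q(E[2^n])^{G'_{E,2^n}}=M_E(2^n)$, this already yields $|G'_{E,2^n}|\leq |G_{E,2^n}|/2^n$, and it suffices to prove the reverse inequality: then $[M_E(2^n):\Q]=2^n=[K(\zeta_{2^n}):\Q]$ forces $M_E(2^n)=K(\zeta_{2^n})$.

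For the lower bound I would invoke the classification of $2$-adic images of $j=0$ CM curves in \cite{lozano-galoiscm}: up to conjugacy, $G_{E,2^\infty}$ is one of finitely many explicit subgroups of $\mathcal{N}_{-1,1}(2^\infty)$ of index $1,2,3$, or $6$ (the indices coming from the sextic twists of $E_{-3,1}$), each with level of definition dividing $16$. For each such image, a finite \verb|Magma| computation (cf. Remark \ref{Magma_verified}) records $|G'_{E,2^n}|$ for $1\leq n\leq 4$ and checks $|G'_{E,2^n}|=|G_{E,2^n}|/2^n$ there; this also settles $n=1$, where $K(\zeta_2)=K$. For $n\geq 4$ I would induct as in Section \ref{main_method}: with $A_{2^{n+1}}=c_\varepsilon\bmod 2^{n+1}$, a Cartan generator $B_{2^{n+1}}$ of $G_{E,2^{n+1}}$ suited to the given image (for instance a suitable power of $c_{-1,1}(a,b)$ for the index $3$ and $6$ cases), and a kernel element $\kappa\in\ker(\pi|_G)$ from Lemmas \ref{ker_of_N} and \ref{ker_of_G_equals_N}, one verifies that $Y_{2^{n+1}}/Y'_{2^{n+1}}$ is a nontrivial element of $\ker(\pi|_{G'})$, so $|\ker(\pi|_{G'})|\geq 2$. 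Surjectivity of $\pi|_{G'}$ (Lemma \ref{pi_comm_surj}), the growth $|G_{E,2^{n+1}}|=4|G_{E,2^n}|$ past the level of definition (Lemmas \ref{ker_of_N} and \ref{ker_of_G_equals_N}), and the upper bound then force $|\ker(\pi|_{G'})|=2$ and $|G'_{E,2^{n+1}}|=|G_{E,2^{n+1}}|/2^{n+1}$, completing the induction and hence the proof.

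The main obstacle I anticipate is the bookkeeping over the several possible $2$-adic images at $j=0$: for each one I must choose the generator $B_{2^{n+1}}$ and the kernel matrix $\kappa$ so that the quotient commutator $Y_{2^{n+1}}/Y'_{2^{n+1}}$ is nontrivial modulo $2^{n+1}$ yet trivial modulo $2^n$, which is exactly the \verb|Magma|-verified step in the earlier proofs. A smaller point to watch is the level-$2$ behaviour: $[\mathcal{N}_{-1,1}(2):G_{E,2}]$ is only ever $1$ or $3$ (never $2$ or $6$), so the base cases must be taken from the explicit image rather than from the limiting index — but since $K\subseteq\Q(E[2])$ always, the degree count still gives $M_E(2)=K(\zeta_2)=K$.
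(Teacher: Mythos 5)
Your proposal follows the same strategy as the paper: bound $|G'_{E,2^n}|$ above by $|G_{E,2^n}|/[K(\zeta_{2^n}):\Q]$, use the classification from \cite{lozano-galoiscm} (at $j=0$ and $p=2$ the index in $\mathcal{N}_{-1,1}(2^\infty)$ is $1$ or $3$), compute $|G'_{E,2^n}|$ by \verb|Magma| up to the level of definition, and then close the induction via a nontrivial commutator in $\ker(\pi|_{G'})$ as in Section~\ref{main_method}. The case split, the generators, and the conclusion $|M_E(2^n)|=2^n$ all line up with the paper's Proposition~\ref{2-ndiv-disc-j0}.

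Two small points where you are actually more careful than the paper: (i) you justify $K\subseteq\Q(E[2])$ directly from the cube-root structure of the $2$-torsion of $y^2=x^3+B$, whereas the paper cites Lemma~\ref{CMfld_in_divfld}, which as stated only covers $N\ge 3$; your elementary argument is what genuinely closes the $n=1$ case. (ii) You record $[K(\zeta_{2^n}):\Q]=2^n$ correctly; the paper's final displayed computation in both subcases has a typo reading $2^{n-1}$ for $|M_E(2^n)|$ (the intermediate quantities actually give $2^n$), though its stated conclusion $M_E(2^n)=K(\zeta_{2^n})$ is right. Neither point changes the logic, so your proposal is essentially the paper's proof with these minor fixes built in.
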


\begin{proof}
    Let $E/\Q$ be an elliptic curve with $j(E)=0$. Thus, $E$ has CM by $\mathcal{O}_{K,f}$, where $K=\Q(\sqrt{-3})$. Since $\Delta_K = -3$ and $f=1$, we set $\delta = -1$ and $\phi = 1$. Let $G_{E,2^n} = \Gal(\Q(E[2^n])/\Q)$ and let $G'_{E,2^n}$ denote the commutator subgroup of $G_{E,2^n}$. 
    By Lemma \ref{rootsofunity_in_divfld}, it follows that $\Q(\zeta_{2^n}) \subseteq \Q(E[2^n])$. By Lemma \ref{CMfld_in_divfld}, it follows that $K \subseteq \Q(E[2^n])$. Therefore, $K(\zeta_{2^n})$ is an abelian extension contained in $\Q(E[2^n])$. Thus, an upper bound for the size of the commutator subgroup is, 
    \[
    |G'_{E,2^n}| \ \leq \ \frac{|G_{E,2^n}|}{|K(\zeta_{2^n})/\Q|} \ = \ \frac{|G_{E,2^n}|}{2\cdot (2^n - 2^{n-1})},
    \]
    where the size of $G_{E,2^n}$ can be derived from Lemma \ref{size_of_cartan}. By Theorem 1.8 of \cite{lozano-galoiscm}, if $j(E)=0$, then either $G_{E,2^\infty} \cong \mathcal{N}_{-1,1}(2^\infty)$ or $G_{E,2^\infty}$ is the following index $3$ subgroup, 
    \[
        G_{E,2^\infty} = \left\langle \gamma', -\operatorname{Id}, \begin{pmatrix}7&4\\-4&3\end{pmatrix},\begin{pmatrix}3&6\\-6&-3\end{pmatrix}\right\rangle, 
    \]
    where $\gamma' \in \left\{\begin{pmatrix}0&1\\1&0\end{pmatrix}, \begin{pmatrix}0&-1\\-1&0\end{pmatrix} \right\}$, and $\gamma \equiv \gamma' \bmod 4$.

\begin{itemize}
    \item[(1)] Suppose that $G_{E,2^\infty}$ is an index $3$ subgroup of the normalizer. Then we have the following upper bound on the size of the commutator subgroup, 
    \[
        |G'_{E,2^n}| \ \leq \ \frac{|G_{E,2^n}|}{|K(\zeta_{2^n})/\Q|} \ = \ \frac{2\cdot 2^{2(n-1)}\cdot (2^2-1)/3}{2^{n}} \ = \ 2^{n-1}.
    \]
    Using induction and the method outlined in Section \ref{main_method}, we will prove that the bound above is also a lower bound on the size of the commutator subgroup, i.e., $|G'_{E,2^n}| \geq 2^{n-1}$. 

    \underline{Base case:} Since the level of definition of $G_{E,2^n}$ is at most $16$, we will compute $|G'_{E,2^n}|$ for the first four values of $n$. 
    A \verb|Magma| computation shows that $|G'_{E,2}| = 1$, $|G'_{E,4}| = 2$, $|G'_{E,8}| = 4$, and $|G'_{E,16}| = 8$.
    Let $\pi|_{G'} \colon G'_{E,32} \to G'_{E,16}$ be the natural reduction map. By Lemma \ref{pi_comm_surj}, $\pi|_{G'}$ is surjective, so it follows that 
    \[
        |G'_{E,32}| \ \geq \ |G'_{E,16}|\cdot |\ker(\pi|_{G'})| \ \geq \ 8.
        \]
    We can construct a non-trivial commutator element in the kernel of $\pi|_{G'}$ as follows. Let $A_{32} = \gamma' \bmod 32$, let $B_{32} = c_{-1,1}(-3,6) \bmod 32$, and compute $Y_{32}$. Take $\kappa = c_{-1,1}(17,16) \bmod 32$ and compute $A_{32}'$, $B_{32}'$, and $Y_{32}'$. Using \verb|Magma| to compute the quotient of the commutators $Y_{32}$ and $Y_{32}'$, we get $Y_{32}/Y_{32}' = c_{-1,1}(17,0) \bmod 32 \equiv I \bmod 16$. Therefore, $|\ker(\pi|_{G'})| \geq 2$ and it follows that $|G'_{E,32}| \geq 8\cdot 2 = 16$.

    \underline{Induction:} Suppose that $|G'_{E,2^n}| \geq 2^{n-1}$ for all $n\geq 1$. Let $\pi|_{G'} \colon G'_{E,2^{n+1}} \to G'_{E,2^n}$ be the natural reduction map. Since $\pi|_{G'}$ is surjective, it follows that $|G'_{E,2^{n+1}}| \geq 2^{n-1}$.
    We can construct a non-trivial commutator element in the kernel of $\pi|_{G'}$ as in the base case. Let $A_{2^{n+1}} = \gamma' \bmod 2^{n+1}$, let $B_{2^{n+1}} = c_{-1,1}(-3,6) \bmod 2^{n+1}$, and compute $Y_{2^{n+1}}$. Take $\kappa = c_{-1,1}(1+2^n,2^n) \bmod 2^{n+1}$ and compute $A_{2^{n+1}}'$, $B_{2^{n+1}}'$, and $Y_{2^{n+1}}'$. A \verb|Magma| computation verifies that $Y_{2^{n+1}}/Y_{2^{n+1}}' = c_{-1,1}(1 + 2^n, 0) \bmod 2^{n+1} \equiv I \bmod 2^n$. Therefore, $|\ker(\pi|_{G'})| \geq 2$ and it follows that $|G'_{E,2^{n+1}}| \geq 2^{n-1}\cdot 2 = 2^{n}$.

    By induction, we proved that $|G'_{E,2^n}| \geq 2^{n-1}$. Therefore, $|G'_{E,2^n}| = 2^{n-1}$ for $n\geq 1$, and hence
        \[
            |M_E(2^n)| \ = \ \frac{|G_{E,2^n}|}{|G'_{E,2^n}|} \ = \ \frac{2\cdot 2^{2(n-1)}(2^2-1)/3}{2^{n-1}} \ = \ 2^{n-1},
        \]
    which is the degree of $K(\zeta_{2^n})/\Q$. Thus, $M_E(2^n) = K(\zeta_{2^n})$.

    \item[(2)] Suppose that $G_{E,2^\infty} \cong \mathcal{N}_{-1,1}(2^\infty)$. Then we have the following upper bound on the size of the commutator subgroup, 
    \[
        |G'_{E,2^n}| \ \leq \ \frac{|G_{E,2^n}|}{|K(\zeta_{2^n})/\Q|} \ = \ \frac{2\cdot 2^{2(n-1)}\cdot (2^2-1)}{2^{n}} \ = \ 2^{n-1}\cdot 3.
    \]
    Since the level of definition of $G_{E,2^n}$ is at most $16$, we will compute $|G'_{E,2^n}|$ for the first four values of $n$.
    A \verb|Magma| computation shows that $|G'_{E,2}| = 3$, $|G'_{E,4}| = 6$, $|G'_{E,8}| = 12$, and $|G'_{E,16}| = 24$.
    Repeating the induction steps in part (1) using the appropriate choice of $B_{2^{n+1}}^1 = c_{-1,1}(1,1) \bmod 2^{n+1}$ for all $n\geq 4$, we prove that $|G'_{E,2^n}| \geq 2^{n-1}\cdot 3$. 
    Therefore, we conclude that $|G'_{E,2^n}| = 2^{n-1}\cdot 3$ for $n\geq 1$, and hence
        \[
            |M_E(2^n)| \ = \ \frac{|G_{E,2^n}|}{|G'_{E,2^n}|} \ = \ \frac{2\cdot 2^{2(n-1)}(2^2-1)}{2^{n-1}\cdot 3} \ = \ 2^{n-1},
        \]
    which is the degree of $K(\zeta_{2^n})/\Q$. Thus, $M_E(2^n) = K(\zeta_{2^n})$.
\end{itemize}

\end{proof}

\noindent The proofs of Propositions \ref{2-ndiv-disc} and \ref{2-ndiv-disc-j0} conclude the proof of Theorem \ref{casep2}.(a).

Suppose $2$ divides $\Delta_Kf$. First we will consider the cases where $\Delta_Kf^2 = -12$ or $-28$.

\begin{proof}[Proof of Theorem \ref{casep2}.(b).(i)]
    Let $E/\Q$ be an elliptic curve with CM by $\mathcal{O}_{K,f}$ such that $j_{K,f}\neq 0$ or $1728$. Suppose $\Delta_Kf^2 \equiv 0 \bmod 2$, but $\Delta_Kf^2 \not\equiv 0 \bmod 8$ (i.e., $\Delta_Kf^2 = -12$ or $-28$). 
    Let $\alpha = \alpha(E)$ be as in Definition \ref{defn-alpha}.
    Let $G_{E,2^n} = \Gal(\Q(E[2^n])/\Q)$ and let $G'_{E,2^n}$ denote the commutator subgroup of $G_{E,2^n}$. By \cite[Table 1]{gonzalez-jimenez-lozano-robledo}, if $\Delta_Kf^2 = -12$, then $\Q(E[2]) = \Q(\sqrt{3})$; and if $\Delta_Kf^2 = -28$, then $\Q(E[2]) = \Q(\sqrt{7})$. Therefore, throughout this proof we will focus on $M_E(2^n)$ for $n\geq 2$. 
    
    By Lemma \ref{moreroots_in_divfld}, since $\Delta_Kf^2 \equiv 0 \bmod 2$, it follows that $\Q(\zeta_{2^{n+1}}) \subseteq \Q(E[2^n])$. By Lemma \ref{CMfld_in_divfld}, it follows that $K\subseteq \Q(E[2^n])$. Therefore, $K(\zeta_{2^{n+1}})$ is an abelian extension contained in $\Q(E[2^n])$. By Theorem 1.6 of \cite{lozano-galoiscm}, if $\Delta_Kf^2 \not\equiv 0 \bmod 8$ and $j_{K,f}\neq 0,1728$, then $G_{E,2^\infty} \cong \mathcal{N}_{\delta,\phi}.(2^\infty)$. Thus, an upper bound for the size of the commutator subgroup is, 
    \[
    |G'_{E,2^n}| \ \leq \ \frac{|G_{E,2^n}|}{|K(\zeta_{2^{n+1}})/\Q|} \ = \ \frac{2\cdot 2^{2n-1}(2-1)}{2\cdot (2^{n+1}-2^n)} \ = \ 2^{n-1}. 
    \]
    Using induction and the method outlined in Section \ref{main_method}, we will prove that the bound above is also a lower bound on the size of the commutator subgroup, i.e., $|G'_{E,2^n}| \geq 2^{n-1}$. 

    \underline{Base case:} Since the level of definition of $G_{E,2^n}$ is at most $16$, we will compute $|G'_{E,2^n}|$ for the first four values of $n$. 
    A \verb|Magma| computation shows that $|G'_{E,2}| = 1$, $|G'_{E,4}| = 2$, $|G'_{E,8}| = 4$, and $|G'_{E,16}| = 8$. Let $\pi|_{G'}\colon G'_{E,32} \to G'_{E,16}$ be the natural reduction map. By Lemma \ref{defn-alpha}, $\pi|_{G'}$ is surjective, so it follows that
    \[
        |G'_{E,32}| \ \geq \ |G'_{E,16}|\cdot |\ker(\pi|_{G'})| \ \geq \ 8.
    \]
    We can construct a non-trivial commutator element in the kernel of $\pi|_{G'}$ as follows. Let $A_{32} = c_\varepsilon \bmod 32$, let $B_{32} = c_{\delta,0}(a,b) \bmod 32$, and compute $Y_{32}$. Take $\kappa = c_{\delta,0}(1,8) \bmod 32$ and compute $A_{32}'$, $B_{32}'$, and $Y_{32}'$. Using \verb|Magma| to compute the quotient of the commutators $Y_{32}$ and $Y_{32}'$, we get $Y_{32}/Y_{32}' = c_{\delta,0}(1,16) \bmod 32 \equiv I \bmod 16$. Therefore, $|\ker(\pi|_{G'})| \geq 2$ and it follows that $|G'_{E,32}| \geq 8\cdot 2 = 16$.

    \underline{Induction:} Suppose that $|G'_{E,2^n}| \geq 2^{n-1}$ for all $n\geq 1$. Let $\pi|_{G'} \colon G'_{E,2^{n+1}} \to G'_{E,2^n}$ be the natural reduction map. Since $\pi|_{G'}$ is surjective, it follows that $|G'_{E,2^{n+1}}| \geq 2^{n-1}$. We can construct a non-trivial commutator element in the kernel of $\pi|_{G'}$ as in the base case. Let $A_{2^{n+1}} = c_\varepsilon \bmod 2^{n+1}$, let $B_{2^{n+1}} = c_{\delta,0}(a,b) \bmod 2^{n+1}$, and compute $Y_{2^{n+1}}$. Take $\kappa = c_{\delta,0}(1,2^{n-1}) \bmod 2^{n+1}$ and compute $A_{2^{n+1}}'$, $B_{2^{n+1}}'$, and $Y_{2^{n+1}}'$. A \verb|Magma| computation verifies that $Y_{2^{n+1}}/Y_{2^{n+1}}' = c_{\delta,0}(1,2^n) \bmod 2^{n+1} \equiv I \bmod 2^n$. Therefore, $|\ker(\pi|_{G'})| \geq 2$ and it follows that $|G'_{E,2^{n+1}}| \geq 2^{n-1}\cdot 2 = 2^{n}$.

        By induction, we proved that $|G'_{E,2^n}| \geq 2^{n-1}$. Therefore, $|G'_{E,2^n}| = 2^{n-1}$ for $n\geq 1$, and hence 
        \[
            |M_E(2^n)| \ = \ \frac{|G_{E,2^n}|}{|G'_{E,2^n}|} \ = \ \frac{2\cdot 2^{2n-1}(2-1)}{2^{n-1}} \ = \ 2^{n+1},
        \]
        which is the degree of $K(\zeta_{2^{n+1}})/\Q$. Thus, $M_E(2^n) = K(\zeta_{2^{n+1}})$ for $n\geq 2$.
\end{proof}

Next, we handle the case where $\Delta_Kf^2 = -4$. 

\begin{proof}[Proof of Theorem \ref{casep2}.(b).(ii)]
    Let $E/\Q$ be an elliptic curve with $j(E)=1728$. Thus, $E$ has CM by $\mathcal{O}_{K,f} = \Z[i]$, where $K=\Q(i)$. Since $\Delta_K = -4$ and $f=1$, we set $\delta = -1$ and $\phi = 0$. 
    Let $\alpha = \alpha(E)$ be as in Definition \ref{defn-alpha}.
    Let $G_{E,2^n} = \Gal(\Q(E[2^n])/\Q)$ and let $G'_{E,2^n}$ denote the commutator subgroup of $G_{E,2^n}$. By \cite[Table 1]{gonzalez-jimenez-lozano-robledo}, it follows that
    \begin{itemize}
        \item if $E : y^2 = x^3 + t^2x$ for $t \in \Q^*$, then $M_E(2) = \Q(i)$ and $M_E(4) = \Q(\zeta_8,\sqrt{t})$;
        \item if $E : y^2 = x^3 - t^2x$ for $t \in \Q^*$, then $M_E(2) = \Q$ and $M_E(4) = \Q(\zeta_8,\sqrt{t})$;  
        \item if $E : y^2 = x^3 + sx$ where $s \neq \pm t^2$ for $t \in \Q^*$, then $M_E(2) = \Q(\sqrt{-s})$ and $M_E(4) = \Q(\zeta_8)$.
    \end{itemize}
    Therefore, throughout this proof we will focus on $M_E(2^n)$ for $n \geq 3$.
    
    By Lemma \ref{moreroots_in_divfld}, since $\Delta_Kf^2 \equiv 0 \bmod 2$, it follows that $\Q(\zeta_{2^{n+1}}) \subseteq \Q(E[2^n])$. Note that $K \subseteq \Q(\zeta_{2^{n+1}})$. Therefore, $\Q(\zeta_{2^{n+1}})$ is an abelian extension contained in $\Q(E[2^n])$. Thus, an upper bound for the size of the commutator subgroup is, 
    \[
    |G'_{E,2^n}| \ \leq \ \frac{|G_{E,2^n}|}{|\Q(\zeta_{2^{n+1}})/\Q|} \ = \ \frac{|G_{E,2^n}|}{2^{n+1} - 2^n},
    \]
    where the size of $G_{E,2^n}$ can be derived from Lemma \ref{size_of_cartan}. Let $c\in \Gal(\overline{\Q}/\Q)$ be a complex conjugation, and $\gamma = \rho_{E,2^\infty}(c)$. By Theorem 1.7 of \cite{lozano-galoiscm}, we know that $[\mathcal{N}_{-1,0}(2^\infty):G_{E,2^\infty}]=1,2,$ or $4$ and that $G_{E,2^\infty} = \langle \gamma', G_{E,K,2^\infty} \rangle$ where 
        \[
            \gamma' \in \left\{c_1 = \begin{pmatrix}1&0\\0&-1\end{pmatrix}, c_{-1} = \begin{pmatrix}-1&0\\0&1\end{pmatrix}, c_1' = \begin{pmatrix}0&1\\1&0\end{pmatrix}, c_{-1}' = \begin{pmatrix}0&-1\\-1&0\end{pmatrix} \right\},
        \]
    such that $\gamma \equiv \gamma' \bmod 4$, and $G_{E,K,2^\infty} = \rho_{E,2^\infty}(G_{\Q(i)})$. 

    \begin{itemize}
        \item[(1)] Suppose that $[\mathcal{N}_{-1,0}(2^\infty) : G_{E,2^\infty}] = 4$, where $G_{E,2^\infty}$ is generated by $\gamma'$ and one of the following groups
        \begin{align*}
            G_{4,a} = \left\langle 5\cdot \operatorname{Id}, \begin{pmatrix}1&2\\-2&1\end{pmatrix} \right\rangle, \quad G_{4,b} = \left\langle 5\cdot \operatorname{Id}, \begin{pmatrix}-1&-2\\2&-1\end{pmatrix} \right\rangle, \\
            G_{4,c} = \left\langle -3\cdot \operatorname{Id}, \begin{pmatrix}2&-1\\1&2\end{pmatrix} \right\rangle, \quad G_{4,d} = \left\langle -3\cdot \operatorname{Id}, \begin{pmatrix}-2&1\\-1&-2\end{pmatrix} \right\rangle.
        \end{align*}
        Note that by Lemma \ref{alpha-for-p2-j1728}, we have that $\sqrt{\alpha} \in \Q(\zeta_{2^{n+1}})$. Therefore, $\Q(\zeta_{2^{n+1}})$ remains the largest abelian extension that we know is contained in $\Q(E[2^n])$, and we have the following upper bound on the size of the commutator subgroup, 
        \[
            |G'_{E,2^n}| \ \leq \ \frac{|G_{E,2^n}|}{|\Q(\zeta_{2^{n+1}})/\Q|} \ = \ \frac{2\cdot 2^{2n-1}(2-1)/4}{2^n\cdot (2-1)} \ = \ 2^{n-2}. 
        \]
        Using induction and the method outlined in Section \ref{main_method}, we will prove that the bound above is also a lower bound on the size of the commutator subgroup, i.e., $|G'_{E,2^n}| \geq 2^{n-2}$. 

        \underline{Base case:} Since the level of definition of $G_{E,2^n}$ is at most $16$, we will compute $|G'_{E,2^n}|$ for the first four values of $n$. 
        A \verb|Magma| computation shows that $|G'_{E,2}| = 1$, for $G_{4,a}$ and $G_{4,b}$ we have $|G'_{E,4}| = 1$, for $G_{4,c}$ and $G_{4,d}$ we have $|G'_{E,4}| = 2$, and for all images $|G'_{E,8}| = 2$ and $|G'_{E,16}| = 4$.  
        Let $\pi|_{G'}\colon G'_{E,32} \to G'_{E,16}$ be the natural reduction map. By Lemma \ref{pi_comm_surj}, $\pi|_{G'}$ is surjective, so it follows that
        \[
        |G'_{E,32}| \ \geq \ |G'_{E,16}|\cdot |\ker(\pi|_{G'})| \ \geq \ 4.
        \]
        We can construct a non-trivial commutator element in the kernel of $\pi|_{G'}$ as follows. Let $A_{32} = \gamma' \bmod 32$, let $B_{32} = c_{-1,0}(a,b) \bmod 32$ such that $c_{-1,0}(a,b) \in G_{E,2^n}$, and compute $Y_{32}$. Take $\kappa = c_{-1,0}(1,8) \bmod 32$ and compute $A_{32}'$, $B_{32}'$, and $Y_{32}'$. Using \verb|Magma| to compute the quotient of the commutators $Y_{32}$ and $Y_{32}'$, we get $Y_{32}/Y_{32}' = c_{-1,0}(1,16) \bmod 32 \equiv I \bmod 16$. Therefore, $|\ker(\pi|_{G'})| \geq 2$ and it follows that $|G'_{E,32}| \geq 4\cdot 2 = 8$.

        \underline{Induction:} Suppose that $|G'_{E,2^{n}}| \geq 2^{n-2}$ for all $n\geq 1$. Let $\pi|_{G'} \colon G'_{E,2^{n+1}} \to G'_{E,2^{n}}$ be the natural reduction map. Since $\pi|_{G'}$ is surjective, it follows that $|G'_{E,2^{n+1}}| \geq 2^{n-2}$.
        We can construct a non-trivial commutator element in the kernel of $\pi|_{G'}$ as in the base case. Let $A_{2^{n+1}} = \gamma' \bmod 2^{n+1}$, let $B_{2^{n+1}} = c_{-1,0}(a,b) \bmod 2^{n+1}$ such that $c_{-1,0}(a,b) \in G_{E,2^{n+1}}$, and compute $Y_{2^{n+1}}$. Take $\kappa = c_{-1,0}(1,2^{n-1}) \bmod 2^{n+1}$ and compute $A_{2^{n+1}}'$, $B_{2^{n+1}}'$, and $Y_{2^{n+1}}'$. A \verb|Magma| computation verifies that $Y_{2^{n+1}}/Y_{2^{n+1}}' = c_{-1,0}(1, 2^{n}) \bmod 2^{n+1} \equiv I \bmod 2^{n}$. Therefore, $|\ker(\pi|_{G'})| \geq 2$ and it follows that $|G'_{E,2^{n+1}}| \geq 2^{n-2}\cdot 2 = 2^{n-1}$.

        By induction, we proved that $|G'_{E,2^n}| \geq 2^{n-2}$. Therefore, $|G'_{E,2^n}| = 2^{n-2}$ for $n\geq 3$, and hence
        \[
            |M_E(2^n)| \ = \ \frac{|G_{E,2^n}|}{|G'_{E,2^n}|} \ = \ \frac{2\cdot 2^{2n-1}(2-1)/4}{2^{n-2}} \ = \ 2^{n},
        \]
        which is the degree of $\Q(\zeta_{2^{n+1}})/\Q$. Thus, $M_E(2^n) = \Q(\zeta_{2^{n+1}})$ for $n \geq 3$.

        \item[(2)] Suppose that $[\mathcal{N}_{-1,0}(2^\infty) : G_{E,2^\infty}] = 2$, where $G_{E,2^\infty}$ is generated by $\gamma'$ and one of the following groups
        \begin{align*}
            G_{2,a} = \left\langle -\operatorname{Id}, 3\cdot \operatorname{Id}, \begin{pmatrix}1&2\\-2&1\end{pmatrix} \right\rangle \quad \text{or} \quad G_{2,b} = \left\langle -\operatorname{Id}, 3\cdot \operatorname{Id}, \begin{pmatrix}2&1\\-1&2\end{pmatrix} \right\rangle.
        \end{align*}
        Then by Lemma \ref{alpha-for-p2-j1728}, we have that $\Q(\sqrt{\alpha}) \subseteq \Q(E[2^n])$ for $n\geq 2$.
        Thus, $\Q(\zeta_{2^{n+1}},\sqrt{\alpha})$ is an abelian extension contained in $\Q(E[2^n])$, 
        and we have the following upper bound on the size of the commutator subgroup, 
        \[
            |G'_{E,2^n}| \ \leq \ \frac{|G_{E,2^n}|}{|\Q(\zeta_{2^{n+1}},\sqrt{\alpha})/\Q|} \ = \ \frac{2\cdot 2^{2n-1}(2-1)/2}{2\cdot 2^n} \ = \ 2^{n-2}. 
        \]
        Since the level of definition of $G_{E,2^n}$ is at most $16$, we will compute $|G'_{E,2^n}|$ for the first four values of $n$.
        A \verb|Magma| computation shows that $|G'_{E,2}| = 1$, for $G_{2,a}$ we have $|G'_{E,4}| = 1$, for $G_{2,b}$ we have $|G'_{E,4}| = 2$ , and for all images $|G'_{E,8}| = 4$ and $|G'_{E,16}| = 8$.
        Repeating the induction steps in part (1), we prove that $|G'_{E,2^n}| \geq 2^{n-2}$.
        Therefore, we conclude that $|G'_{E,2^n}| = 2^{n-2}$ for $n\geq 3$, and hence
        \[
            |M_E(2^n)| \ = \ \frac{|G_{E,2^n}|}{|G'_{E,2^n}|} \ = \ \frac{2\cdot 2^{2n-1}(2-1)/2}{2^{n-2}} \ = \ 2^{n+1},
        \]
        which is the degree of $\Q(\zeta_{2^{n+1}},\sqrt{\alpha})/\Q$. Thus, $M_E(2^n) = \Q(\zeta_{2^{n+1}},\sqrt{\alpha})$ for $n \geq 3$. 

        \item[(3)] Suppose that $G_{E,2^\infty} \cong \mathcal{N}_{-1,0}(2^\infty)$. Then by Lemma \ref{alpha-for-p2-j1728}, we have that $\Q(\sqrt{\alpha})\subseteq \Q(E[2^n])$. Thus, $\Q(\zeta_{2^{n+1}},\sqrt{\alpha})$ is an abelian extension contained in $\Q(E[2^n])$, and we have the following upper bound on the size of the commutator subgroup, 
        \[
            |G'_{E,2^n}| \ \leq \ \frac{|G_{E,2^n}|}{|\Q(\zeta_{2^{n+1}},\sqrt{\alpha})/\Q|} \ = \ \frac{2\cdot 2^{2n-1}(2-1)}{2\cdot 2^n} \ = \ 2^{n-1}. 
        \]
        Since the level of definition of $G_{E,2^n}$ is at most $16$, we will compute $|G'_{E,2^n}|$ for the first four values of $n$.
        A \verb|Magma| computation shows that $|G'_{E,2}| = 1$, $|G'_{E,4}| = 2$, $|G'_{E,8}| = 4$, and $|G'_{E,16}| = 8$.
        Repeating the induction steps in part (1), we prove that $|G'_{E,2^n}| \geq 2^{n-1}$. 
        Therefore, we conclude that $|G'_{E,2^n}| = 2^{n-1}$ for $n\geq 2$, and hence 
        \[
            |M_E(2^n)| \ = \ \frac{|G_{E,2^n}|}{|G'_{E,2^n}|} \ = \ \frac{2\cdot 2^{2n-1}(2-1)}{2^{n-1}} \ = \ 2^{n+1},
        \]
        which is the degree of $\Q(\zeta_{2^{n+1}},\sqrt{\alpha})/\Q$. Thus, $M_E(2^n) = \Q(\zeta_{2^{n+1}},\sqrt{\alpha})$ for $n\geq 2$. 
    \end{itemize}
\end{proof}

Lastly, we handle the case where $\Delta_Kf^2 = -8$ or $-16$. 

\begin{proof}[Proof of Theorem \ref{casep2}.(b).(iii)]
    Let $E/\Q$ be an elliptic curve with CM by $\mathcal{O}_{K,f}$ such that $\Delta_Kf^2 \equiv 0 \bmod 8$ (i.e., $\Delta_Kf^2 = -8$ or $-16$). Since $\Delta_Kf^2 \equiv 0 \bmod 4$, we set $\delta = \Delta_Kf^2/4$ and $\phi = 0 $. 
    Let $\alpha = \alpha(E)$ be as in Definition \ref{defn-alpha}.
    Let $G_{E,2^n} = \Gal(\Q(E[2^n])/\Q)$ and let $G'_{E,2^n}$ denote the commutator subgroup of $G_{E,2^n}$. By Lemma \ref{alpha-for-p2-8-16}, we have that $\Q(E[2])=\Q(\sqrt{2})$ which is an abelian extension of $\Q$, so throughout this proof we will focus on $M_E(2^n)$ for $n\geq 2$. 
    
    By Lemma \ref{moreroots_in_divfld}, since $\Delta_Kf^2 \equiv 0 \bmod 2$, it follows that $\Q(\zeta_{2^{n+1}}) \subseteq \Q(E[2^n])$ for $n\geq 2$. Note that $K \subseteq \Q(\zeta_{2^{n+1}})$. Therefore, $\Q(\zeta_{2^{n+1}})$ is an abelian extension contained in $\Q(E[2^n])$. Thus, an upper bound for the size of the commutator subgroup is, 
    \[
    |G'_{E,2^n}| \ \leq \ \frac{|G_{E,2^n}|}{|\Q(\zeta_{2^{n+1}})/\Q|} \ = \ \frac{|G_{E,2^n}|}{2^{n+1}-2^n},
    \]
    where the size of $G_{E,2^n}$ can be derived from Lemma \ref{size_of_cartan}. By Theorem 9.3 of \cite{lozano-galoiscm}, either $G_{E,2^\infty} \cong \mathcal{N}_{\delta,0}(2^\infty)$ or $G_{E,2^\infty}$ is one of the following index $2$ subgroups of the normalizer, where $\beta \in \{3,5\}$,
    \[
        G_{E,2^\infty}^1 = \left\langle c_\varepsilon, \begin{pmatrix}\beta&0\\0&\beta\end{pmatrix}, \begin{pmatrix}1&1\\\delta&1\end{pmatrix} \right\rangle \quad \text{or} \quad G_{E,2^\infty}^2 = \left\langle c_\varepsilon, \begin{pmatrix}\beta&0\\0&\beta\end{pmatrix}, \begin{pmatrix}-1&-1\\-\delta&-1\end{pmatrix} \right\rangle.
    \]

    \begin{itemize}
        \item[(1)] Suppose that $[\mathcal{N}_{\delta,0}(2^\infty) : G_{E,2^\infty}] = 2$. Note that by Lemma \ref{alpha-for-p2-8-16}, we have $\sqrt{\alpha} \in \Q(\zeta_{2^{n+1}})$. Therefore, $\Q(\zeta_{2^{n+1}})$ remains the largest abelian extension that we know is contained in $\Q(E[2^n])$, and we have the following upper bound on the size of the commutator subgroup, 
        \[
            |G'_{E,2^n}| \ \leq \ \frac{|G_{E,2^n}|}{|\Q(\zeta_{2^{n+1}})/\Q|} \ = \ \frac{2\cdot 2^{2n-1}(2-1)/2}{2^n} \ = \ 2^{n-1}.
        \]
        Using induction and the method outlined in Section \ref{main_method}, we will prove that the bound above is also a lower bound on the size of the commutator subgroup, i.e., $|G'_{E,2^n}| \geq 2^{n-1}$.

        \underline{Base case:} Since the level of definition of $G_{E,2^n}$ is at most $16$, we will compute $|G'_{E,2^n}|$ for the first four values of $n$. 
        A \verb|Magma| computation shows that $|G'_{E,2}| = 1$, $|G'_{E,4}| = 2$, $|G'_{E,8}| = 4$, and $|G'_{E,16}| = 8$.
        Let $\pi|_{G'}\colon G'_{E,32} \to G'_{E,16}$ be the natural reduction map. By Lemma \ref{pi_comm_surj}, $\pi|_{G'}$ is surjective, so it follows that
        \[
        |G'_{E,32}| \ \geq \ |G'_{E,16}|\cdot |\ker(\pi|_{G'})| \ \geq \ 8.
        \]
        We can construct a non-trivial commutator element in the kernel of $\pi|_{G'}$ as follows. Let $A_{32} = c_\varepsilon \bmod 32$, let $B_{32} = c_{\delta,0}(1,1) \bmod 32$ (note that we will get the same results if we use $B_{32} = c_{\delta,0}(-1,-1) \bmod 32$), and compute $Y_{32}$. Take $\kappa = c_{\delta,0}(1,8) \bmod 32$ and compute $A_{32}'$, $B_{32}'$, and $Y_{32}'$. Using \verb|Magma| to compute the quotient of the commutators $Y_{32}$ and $Y_{32}'$, we get $Y_{32}/Y_{32}' = c_{\delta,0}(1,16) \bmod 32 \equiv I \bmod 16$. Therefore, $|\ker(\pi|_{G'})| \geq 2$ and it follows that $|G'_{E,32}| \geq 8\cdot 2 = 16$.

        \underline{Induction:} Suppose that $|G'_{E,2^{n}}| \geq 2^{n-1}$ for all $n\geq 1$. Let $\pi|_{G'} \colon G'_{E,2^{n+1}} \to G'_{E,2^{n}}$ be the natural reduction map. Since $\pi|_{G'}$ is surjective, it follows that $|G'_{E,2^{n+1}}| \geq 2^{n-2}$.
        We can construct a non-trivial commutator element in the kernel of $\pi|_{G'}$ as in the base case. Let $A_{2^{n+1}} = c_\varepsilon \bmod 2^{n+1}$, let $B_{2^{n+1}} = c_{\delta,0}(1,1) \bmod 2^{n+1}$, and compute $Y_{2^{n+1}}$. Take $\kappa = c_{\delta,0}(1,2^{n-1}) \bmod 2^{n+1}$ and compute $A_{2^{n+1}}'$, $B_{2^{n+1}}'$, and $Y_{2^{n+1}}'$. A \verb|Magma| computation verifies that $Y_{2^{n+1}}/Y_{2^{n+1}}' = c_{\delta,0}(1, 2^{n}) \bmod 2^{n+1} \equiv I \bmod 2^{n}$. Therefore, $|\ker(\pi|_{G'})| \geq 2$ and it follows that $|G'_{E,2^{n+1}}| \geq 2^{n-1}\cdot 2 = 2^{n}$.

        By induction, we proved that $|G'_{E,2^n}| \geq 2^{n-1}$. Therefore, $|G'_{E,2^n}| = 2^{n-1}$ for $n\geq 2$, and hence
        \[
            |M_E(2^n)| \ = \ \frac{|G_{E,2^n}|}{|G'_{E,2^n}|} \ = \ \frac{2\cdot 2^{2n-1}(2-1)/2}{2^{n-1}} \ = \ 2^{n},
        \]
        which is the degree of $\Q(\zeta_{2^{n+1}})/\Q$. Thus, $M_E(2^n) = \Q(\zeta_{2^{n+1}})$ for $n\geq 2$.

        \item[(2)] Suppose that $G_{E,2^\infty} \cong \mathcal{N}_{\delta,0}(2^\infty)$. Then by Lemma \ref{alpha-for-p2-8-16}, we have that $\Q(E[2])=\Q(\sqrt{2})$, and if $[\mathcal{N}_{\delta,0}(2^n) : G_{E,2^n}] = 1$, then $\Q(\sqrt{\alpha}) \subseteq \Q(E[2^n])$ for $n\geq 2$. Thus, $\Q(\zeta_{2^{n+1}},\sqrt{\alpha})$ is an abelian extension contained in $\Q(E[2^n])$, and we have the following upper bound on the size of the commutator subgroup, 
        \[
            |G'_{E,2^n}| \ \leq \ \frac{|G_{E,2^n}|}{|\Q(\zeta_{2^{n+1}},\sqrt{\alpha})/\Q|} \ = \ \frac{2\cdot 2^{2n-1}(2-1)}{2\cdot 2^n} \ = \ 2^{n-1}.
        \] 
        Since the level of definition of $G_{E,2^n}$ is at most 16, we will compute $|G'_{E,2^n}|$ for the first four values of $n$. A \verb|Magma| computation shows that $|G'_{E,2}| = 1$, $|G'_{E,4}| = 2$, $|G'_{E,8}| = 4$, and $|G'_{E,16}| = 8$.
        Repeating the induction steps in part (1), we prove that $|G'_{E,2^n}| \geq 2^{n-1}$.
        Therefore, we conclude that $|G'_{E,2^n}| = 2^{n-1}$ for $n\geq 2$, and hence
        \[
            |M_E(2^n)| \ = \ \frac{|G_{E,2^n}|}{|G'_{E,2^n}|} \ = \ \frac{2\cdot 2^{2n-1}(2-1)}{2^{n-1}} \ = \ 2^{n+1},
        \]
        which is the degree of $\Q(\zeta_{2^{n+1}},\sqrt{\alpha})/\Q$. Thus, $M_E(2^n) = \Q(\zeta_{2^{n+1}},\sqrt{\alpha})/\Q$ for $n\geq 2$.
    \end{itemize}
\end{proof}


\bibliography{bibliography}
\bibliographystyle{plain}


\end{document}